\newtheorem{lemma}{Lemma}[section]
\newtheorem{theorem}[lemma]{Theorem}
\newtheorem*{theorem*}{Theorem}
\newtheorem{corollary}[lemma]{Corollary}
\newtheorem{proposition}[lemma]{Proposition}
\newtheorem*{proposition*}{Proposition}
\newtheorem*{problem*}{Problem}
\theoremstyle{definition}
\newtheorem*{claim*}{Claim}
\newtheorem*{definition}{Definition}
\newtheorem*{remark}{Remark}
\newtheorem*{remarks}{Remarks}
\newtheorem*{theoremA}{Theorem A}
\newtheorem*{theoremB}{Theorem B}
\newtheorem*{theoremC}{Theorem C}
\newtheorem*{Correspondence1}{Furstenberg Correspondence Principle}
\newcommand{\vf}{{\vec q}}
\newcommand{\C}{{\mathbb C}}
\newcommand{\D}{{\mathbb D}}
\newcommand{\E}{{\mathbb E}}
\newcommand{\N}{{\mathbb N}}
\renewcommand{\P}{{\mathbb P}}
\newcommand{\R}{{\mathbb R}}
\newcommand{\T}{{\mathbb T}}
\newcommand{\Z}{{\mathbb Z}}
\newcommand{\CM}{{\mathcal M}}
\newcommand{\CX}{{\mathcal X}}
\newcommand{\bm}{{\mathbf{m}}}
\newcommand{\ve}{\varepsilon}
\newcommand{\e}{\mathrm{e}}
\newcommand{\one}{\mathbf{1}}
\newcommand{\tN}{{\widetilde N}}
\newcommand{\norm}[1]{\left\Vert #1\right\Vert}
\newcommand{\st}{{\text{\rm st}}}
\newcommand{\un}{{\text{\rm un}}}
\DeclareMathOperator{\reel}{Re}
\renewcommand{\Re}{\reel}
\begin{document}

\title{Multiple ergodic theorems for arithmetic sets}

\author{Nikos Frantzikinakis}
\address[Nikos Frantzikinakis]{University of Crete, Department of mathematics, Voutes University Campus, Heraklion 71003, Greece} \email{frantzikinakis@gmail.com}
\author{Bernard Host}
\address[Bernard Host]{
Universit\'e Paris-Est Marne-la-Vall\'ee, Laboratoire d'analyse et
de math\'ematiques appliqu\'ees, UMR CNRS 8050, 5 Bd Descartes,
77454 Marne la Vall\'ee Cedex, France }
\email{bernard.host@u-pem.fr}

\begin{abstract}
We establish  results with an arithmetic flavor that generalize the polynomial multidimensional
Szemer\'edi theorem and related multiple recurrence and convergence results in ergodic theory. For instance,  we
show
that in all these statements  we can restrict  the implicit parameter $n$ to those integers that have
an even  number of distinct prime factors, or satisfy any other congruence condition. In order to obtain  these refinements we study the limiting behavior of some closely related  multiple ergodic averages with weights given by appropriately chosen multiplicative functions.
 These averages are then analysed using a recent structural result for bounded  multiplicative functions proved by the authors.
\end{abstract}

\subjclass[2010]{Primary: 37A45; Secondary:  05D10, 11B30, 11N37, 28D05. }

\keywords{Multiple ergodic averages, multiple recurrence, multiplicative functions, higher degree uniformity.}


\maketitle

 \maketitle

\section{Introduction and main results}
\subsection{Introduction}

 The multi-dimensional Szemer\'edi theorem of H.~Furstenberg and Y.~Katznelson~\cite{FuK79}, stated in
   ergodic terms, asserts that
if $T_1,\ldots, T_\ell$ are commuting measure preserving transformations acting on the same probability space $(X,\CX,\mu)$, then for every    $A\in \CX$ with $\mu(A)>0$ there exists $n\in \N$ such that
$$
\mu(T_1^{-n}A\cap \cdots \cap T^{-n}_\ell A)>0.
$$
More recently, T.~Tao~\cite{Ta08} established  mean convergence for  some closely related  multiple ergodic averages by showing that for  $F_1,\ldots, F_\ell\in L^\infty(\mu)$ the averages
$$
\frac{1}{N}\sum_{n=1}^NT_1^nF_1\cdots T_\ell^nF_\ell
$$
converge in the mean as $N\to \infty$. In this article we are interested in studying variants of such statements
where the parameter $n$ is restricted to certain subsets of the integers of  arithmetic nature. For instance, we are interested in knowing whether  the previous results  remain true when we restrict the parameter  $n$ to those integers that have an even (or an odd) number of distinct prime factors. More generally, do they hold if we restrict  $n$ to those integers  that have
$a\! \! \mod{b}$ distinct prime factors for some $a,b\in \N$?

  We answer these questions affirmatively.  In order to give some model results in this introductory section  (extensions  and related statements appear in Section~\ref{SS:results}) we introduce some notation.  For $a,b\in \N$   we let
$S_{a,b}$ consist of  those $n\in \N$ whose number of  distinct prime factors is congruent to $a$ $\! \! \! \mod{b}$.
It can be shown  that for every $a\in \{0,\ldots, b-1\}$ the set  $S_{a,b}$ has density $1/b$ (see the second remark after Proposition~\ref{C:Halasz}).
\begin{theoremA}
 Let $T_1,\ldots, T_\ell$ be commuting measure preserving transformations acting on the same probability space $(X,\CX,\mu)$. Then for every    $A\in \CX$ with $\mu(A)>0$ we have
$$
\mu(T_1^{-n}A\cap \cdots \cap T^{-n}_\ell A)>0
$$
for a set of  $n\in S_{a,b}$  with positive lower density.
\end{theoremA}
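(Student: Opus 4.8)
The plan is to encode the condition $n\in S_{a,b}$ by a bounded multiplicative weight and to reduce Theorem~A to a convergence statement for multiple ergodic averages carrying that weight. Write $\omega(n)$ for the number of distinct prime divisors of $n$ and set $\xi=\e^{2\pi i/b}$. Since $\omega$ is additive on coprime integers, for each $j\in\{0,1,\dots,b-1\}$ the function $f_j(n)=\xi^{\,j\omega(n)}$ is multiplicative with $\lvert f_j\rvert\le1$, we have $f_0\equiv1$, and the orthogonality relations for the characters of $\Z/b\Z$ give
\[
\one_{S_{a,b}}(n)=\frac1b\sum_{j=0}^{b-1}\xi^{-aj}\,f_j(n),\qquad n\in\N .
\]
Since $\mu\bigl(\bigcap_{i=1}^\ell T_i^{-n}A\bigr)=\int\prod_{i=1}^\ell T_i^n\one_A\,d\mu$ is a number in $[0,1]$, it suffices to understand, for $0\le j\le b-1$, the averages $\frac1N\sum_{n=1}^N f_j(n)\,\mu\bigl(\bigcap_{i=1}^\ell T_i^{-n}A\bigr)$.

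For $j=0$ this is the classical average $\frac1N\sum_{n=1}^N\mu\bigl(\bigcap_{i=1}^\ell T_i^{-n}A\bigr)$, which converges (Tao) to a limit $c(A)$, and $c(A)>0$ by the multidimensional Szemer\'edi theorem of Furstenberg and Katznelson. For $1\le j\le b-1$ I claim that
\[
\lim_{N\to\infty}\frac1N\sum_{n=1}^N f_j(n)\,\mu\Bigl(\bigcap_{i=1}^\ell T_i^{-n}A\Bigr)=0 .
\]
Granting the claim, the average of $\one_{S_{a,b}}(n)\,\mu\bigl(\bigcap_{i=1}^\ell T_i^{-n}A\bigr)$ converges to $c(A)/b>0$. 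Each summand lies in $[0,1]$ and vanishes unless both $n\in S_{a,b}$ and $\mu\bigl(\bigcap_{i=1}^\ell T_i^{-n}A\bigr)>0$, hence
\[
\#\Bigl\{n\le N:\ n\in S_{a,b},\ \mu\bigl(\textstyle\bigcap_{i=1}^\ell T_i^{-n}A\bigr)>0\Bigr\}\ \ge\ \sum_{n=1}^N\one_{S_{a,b}}(n)\,\mu\Bigl(\bigcap_{i=1}^\ell T_i^{-n}A\Bigr),
\]
so the set of $n\in S_{a,b}$ with $\mu\bigl(\bigcap_{i=1}^\ell T_i^{-n}A\bigr)>0$ has lower density at least $c(A)/b>0$, which is Theorem~A.

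Everything thus rests on the vanishing claim, and this is where the authors' structure theorem for bounded multiplicative functions enters. The crucial property is that each $f_j$ with $1\le j\le b-1$ is \emph{aperiodic}: on the primes it takes the constant value $\xi^{\,j}\ne1$, so for every Dirichlet character $\chi$ and every $t\in\R$ its pretentious distance to $n\mapsto\chi(n)n^{it}$ is infinite, and consequently $\frac1N\sum_{n\le N}f_j(qn+r)\to0$ for all $q,r$. Two further inputs then complete the proof of the claim. First, the structure theory of multiple ergodic averages (Host--Kra, Leibman, Frantzikinakis) shows that the correlation sequence $a(n):=\mu\bigl(\bigcap_{i=1}^\ell T_i^{-n}A\bigr)$ agrees, up to a sequence tending to $0$ in density, with a nilsequence $\psi$; so the claim reduces to proving $\frac1N\sum_{n\le N}f_j(n)\psi(n)\to0$ for nilsequences $\psi$. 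Second, the structure theorem decomposes $f_j=f_j^{\st}+f_j^{\un}+f_j^{\er}$ on long intervals, where $f_j^{\un}$ has Gowers uniformity norm of the relevant degree as small as we wish, $f_j^{\er}$ is as small as we wish in $L^1$, and $f_j^{\st}$ is (approximately) periodic; the uniform part does not correlate with $\psi$, the error part is negligible, and the aperiodicity of $f_j$ forces $f_j^{\st}$ to have vanishing mean on every progression, hence $f_j^{\st}$ is itself negligible. Combining these gives $\frac1N\sum_{n\le N}f_j(n)\psi(n)\to0$, proving the claim.

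The main obstacle is the last step. Once the decomposition of $f_j$ is available, the uniform and error contributions are handled routinely, but the structured part requires genuine care: one must convert aperiodicity into quantitative decay of $f_j^{\st}$ along arithmetic progressions and, relatedly, rule out spurious correlation coming from the Archimedean characters $n\mapsto n^{it}$ that can be hidden inside $f_j^{\st}$ and inside $\psi$. One must also ensure that the order of Gowers uniformity furnished by the structure theorem is at least the one dictated by the complexity of the nilsequence $\psi$ produced by the characteristic factor machinery, so that these two essentially independent structural results can be combined. Matching the parameters of these two black boxes is the heart of the matter.
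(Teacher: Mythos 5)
Your argument is correct, and its skeleton is the paper's: the expansion $\one_{S_{a,b}}=\frac1b\sum_{j=0}^{b-1}\xi^{-aj}f^j$ with $f(p^k)=\xi$ (your $f_j=\xi^{j\omega(\cdot)}$ is exactly $f^j$), the aperiodicity of $f^j$ for $1\le j\le b-1$ proved exactly as in Proposition~\ref{C:Halasz} (infinite pretentious distance to $n^{it}\chi(n)$, with Dirichlet's theorem handling $t=0$), and the final counting argument against the Furstenberg--Katznelson positivity bound are all as in the paper. Where you genuinely diverge is the key vanishing claim for $1\le j\le b-1$. The paper proves the stronger $L^2$ statement $\frac1N\sum_{n\le N}f^j(n)\,T_1^n\one_A\cdots T_\ell^n\one_A\to 0$ directly: the uniformity estimates of Lemma~\ref{L:VDC} bound such weighted averages by a Gowers norm of the weight, and Theorem~\ref{T:aperiodic} says aperiodic multiplicative functions have vanishing $U^s(\Z_N)$-norms for every $s$; in the aperiodic case no decomposition of the weight and no nilsequence machinery are needed. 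You instead approximate the correlation sequence $\mu\bigl(\bigcap_i T_i^{-n}A\bigr)$ by a nilsequence up to a density-negligible error and then show that aperiodic multiplicative functions do not correlate with nilsequences; this is precisely the ``alternate approach for weak convergence'' described in Section~\ref{SS:weak}, it does suffice for Theorem~A (only the integrated averages matter for recurrence, and the nilsequence here has bounded step), and it buys independence from the full strength of the structure theorem at the price of invoking the correlation-sequence decomposition of \cite{F14}. Two corrections of emphasis: the parameter-matching issue you call ``the heart of the matter'' is not a genuine obstruction, since both black boxes are available at every degree of uniformity; and for aperiodic $f^j$ the structured component is itself asymptotically negligible in sup-norm (it has boundedly many major-arc Fourier coefficients, each tending to $0$ by aperiodicity), so the delicate treatment of $f^{\st}$ that you anticipate is only required for non-aperiodic weights, i.e., for the convergence statement of Theorem~B rather than for Theorem~A.
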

We deduce from this ergodic statement, via the correspondence principle of H.~Furstenberg (see Section~\ref{SS:combinatorics})  that
 every set of integers with positive upper density contains arbitrarily long arithmetic progressions with common difference
 taken from the set $S_{a,b}$;  similar statements also hold for the multidimensional Szem\'eredi theorem, polynomial variants of it
  (see Theorem~\ref{T:combinatorics}), and for any shift of the sets $S_{a,b}$.

\begin{theoremB}
 Let $T_1,\ldots, T_\ell$ be commuting measure preserving transformations acting on the same probability space $(X,\CX,\mu)$. Then for all $F_1,\ldots, F_\ell\in L^\infty(\mu)$, the averages
\begin{equation}\label{E:averages}
\frac{1}{N}\sum_{n\in S_{a,b}\cap [1,N]}T_1^nF_1\cdots T_\ell^nF_\ell
\end{equation}
converge in $L^2(\mu)$.  In fact, the limit is equal to
$\lim_{N\to\infty}\frac{1}{bN}\sum_{n=1}^N T_1^nF_1\cdots T_\ell^nF_\ell$.
\end{theoremB}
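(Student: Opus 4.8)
The plan is to expand the indicator of $S_{a,b}$ into multiplicative weights and then to annihilate all but one of the resulting terms by means of the authors' structure theorem for multiplicative functions. Set $\zeta:=\e^{2\pi i/b}$ and, for $j\in\{0,1,\dots,b-1\}$, let $f_j(n):=\zeta^{\,j\,\omega(n)}$, where $\omega(n)$ is the number of distinct prime divisors of $n$. Since $\omega$ is additive on coprime integers, each $f_j$ is a multiplicative function with $|f_j|\le 1$, and orthogonality of the characters of $\Z/b\Z$ gives
$$
\one_{S_{a,b}}(n)=\frac1b\sum_{j=0}^{b-1}\zeta^{-aj}f_j(n).
$$
Hence the average in~\eqref{E:averages} equals $\frac1b\sum_{j=0}^{b-1}\zeta^{-aj}A_N(f_j)$, where $A_N(g):=\frac1N\sum_{n=1}^{N}g(n)\,T_1^nF_1\cdots T_\ell^nF_\ell$. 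For $j=0$ we have $f_0\equiv 1$, and $A_N(f_0)$ converges in $L^2(\mu)$ to $L:=\lim_{N\to\infty}\frac1N\sum_{n=1}^{N}T_1^nF_1\cdots T_\ell^nF_\ell$ by Tao's theorem \cite{Ta08}. Therefore both assertions of Theorem~B follow once we show that $A_N(f_j)\to 0$ in $L^2(\mu)$ for every $j\in\{1,\dots,b-1\}$, since then the limit of~\eqref{E:averages} is $\frac1b L$.

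Fix such a $j$. First I would observe that $f_j$ is aperiodic, i.e. $\frac1N\sum_{n\le N}f_j(qn+r)\to 0$ for all $q,r\in\N$: indeed $f_j$ equals the constant $\zeta^j$ on every prime, and since $\arg\zeta^j=2\pi j/b\notin 2\pi\Z$ while $\overline{\chi(p)}\,p^{-it}$ equidistributes on the unit circle along the primes, one has $\sum_p\frac{1-\Re\bigl(f_j(p)\overline{\chi(p)}\,p^{-it}\bigr)}p=\infty$ for every Dirichlet character $\chi$ and every $t\in\R$; the Hal\'asz-type criterion of Proposition~\ref{C:Halasz} then yields the asserted cancellation on every progression. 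Aperiodicity is exactly the hypothesis under which the structure theorem for bounded multiplicative functions, applied to $f_j$, provides for every $\varepsilon>0$ and every sufficiently large $N$ a decomposition $f_j=f_{j,\un}+f_{j,\er}$ on $[1,N]$ (equivalently on $\ZN$ for a prime $\tN$ comparable to $N$) with negligible structured component, where $\|f_{j,\un}\|_{U^s[N]}\le\varepsilon$ for a degree $s=s(\ell)$ depending only on $\ell$, and $\|f_{j,\er}\|_{L^1}\le\varepsilon$ (the normalised $L^1$ average of $|f_{j,\er}|$ over $[1,N]$). Correspondingly $A_N(f_j)=A_N(f_{j,\un})+A_N(f_{j,\er})$, up to a term whose $L^2(\mu)$ norm tends to $0$.

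The error part is handled trivially: $\|A_N(f_{j,\er})\|_{L^2(\mu)}\le\|f_{j,\er}\|_{L^1}\prod_i\|F_i\|_\infty\le\varepsilon\prod_i\|F_i\|_\infty$. For the uniform part I would invoke the uniformity estimate for multiple ergodic averages of $\ell$ commuting transformations: with $s=s(\ell)$ as above there is $C=C(\ell)$ such that
$$
\bigl\|A_N(a)\bigr\|_{L^2(\mu)}\le C\,\|a\|_{U^s[N]}\prod_i\|F_i\|_\infty
$$
for every bounded sequence $a$; this comes from the theory of characteristic factors for commuting transformations via box (Gowers--Host--Kra type) seminorms together with a generalised von Neumann / PET--van der Corput reduction. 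Taking $a=f_{j,\un}$ gives $\|A_N(f_{j,\un})\|_{L^2(\mu)}\le C\varepsilon\prod_i\|F_i\|_\infty$, whence $\limsup_{N\to\infty}\|A_N(f_j)\|_{L^2(\mu)}\le (C+1)\varepsilon\prod_i\|F_i\|_\infty$ for every $\varepsilon>0$; letting $\varepsilon\to0$ gives $A_N(f_j)\to0$, which completes the proof. (Alternatively one may retain the structured component $f_{j,\st}$ and check directly that its average against the multiple ergodic averages vanishes for aperiodic $f_j$, using that $f_{j,\st}$ is a low-complexity combination of twisted Dirichlet characters together with the cancellation on progressions established above — this being essentially the content of the Multiplicative correspondence principle.)

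The main obstacle is the uniformity estimate of the previous paragraph: that a bounded sequence of small $U^s[N]$ norm contributes negligibly to $\frac1N\sum_{n\le N}a(n)T_1^nF_1\cdots T_\ell^nF_\ell$. For a single transformation this is classical Host--Kra theory, but for commuting $T_1,\dots,T_\ell$ it rests on the more delicate box-seminorm characteristic-factor machinery (Host, Austin), and one must arrange that the degree $s$ it produces depends only on $\ell$ and matches the degree for which the structure theorem supplies the uniform decomposition. The remaining ingredients — the character expansion, the Hal\'asz computation, and the estimate on the error term — are routine.
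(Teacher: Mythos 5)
Your proposal is correct and follows essentially the same route as the paper: the character expansion of $\one_{S_{a,b}}$, aperiodicity of the twisted multiplicative weights via the Hal\'asz-type criterion (Proposition~\ref{C:Halasz}), the structure theorem to reduce the aperiodic terms to a $U^s$-uniform sequence, a generalized von Neumann estimate to annihilate them, and Tao's theorem for the $j=0$ term. The uniformity estimate you flag as the main obstacle is exactly Lemma~\ref{L:VDC} (from \cite{FHK13}), proved by a PET/van der Corput argument without any characteristic-factor machinery, and the fact that aperiodic multiplicative functions are themselves $U^s$-uniform is Theorem~\ref{T:aperiodic}; the paper packages these steps as Theorem~C (Theorem~\ref{T:1}), modulo the bookkeeping of Lemma~\ref{L:compare} needed to pass between the groups $\Z_\tN$ and $\Z_{sN}$.
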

In order to analyze the averages \eqref{E:averages}  we do not use the theory of characteristic factors; even for averages of the form  $\frac{1}{N}\sum_{n=1}^N T_1^nF_1\cdots T_\ell^nF_\ell$
this theory is very intricate and  not yet developed to an extent that facilitates our study. Instead, we proceed by comparing the averages \eqref{E:averages} with the averages $\frac{1}{bN}\sum_{n=1}^N T_1^nF_1\cdots T_\ell^nF_\ell$ and show that the  difference converges to $0$ in $L^2(\mu)$. To do this, we work with some weighted multiple ergodic averages with weights given by  suitably chosen multiplicative functions. Then the asserted convergence to $0$ is a consequence of the next statement.
\begin{theoremC}
Let $f\in \CM_{conv}$ be a multiplicative function (see definition in Section~\ref{SS:results}).  If
$T_1,\ldots, T_\ell$ are commuting measure preserving transformations  acting on the same probability space $(X,\CX,\mu)$, then for all   $F_1,\ldots, F_\ell\in L^\infty(\mu)$,  the averages
\begin{equation}\label{E:multiplicative}
\frac{1}{N}\sum_{n=1}^N f(n)\cdot T_1^nF_1\cdots T_\ell^nF_\ell
\end{equation}
converge in $L^2(\mu)$. Furthermore, the limit is zero if $f$ is aperiodic (see definition in Section~\ref{SS:results})
\end{theoremC}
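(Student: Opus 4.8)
The plan is to reduce the problem to a finite-complexity situation via the authors' recently proved structural theorem for bounded multiplicative functions, and then to exploit known seminorm-control results for multiple ergodic averages. First I would recall that any $f\in\CM_{conv}$ is bounded; replacing $f$ by its real and imaginary parts (both combinations of such functions, up to the standard correspondence), it suffices to treat the averages \eqref{E:multiplicative} for a single bounded multiplicative weight. The key input is the structure theorem, which decomposes $f$ (restricted to an interval $[1,N]$, after passing to $\Z_{\tN}$ for a suitable $\tN$ slightly larger than $N$) as $f = f_{\st} + f_{\un} + f_{\er}$, where $f_{\st}$ is a ``structured'' piece that is approximately a finite combination of periodic functions times nilsequence-type terms (in the relevant setting, essentially periodic functions), $f_{\un}$ has small Gowers $U^s$-norm for the relevant $s$ (depending on $\ell$), and $f_{\er}$ is small in $L^1$ (in fact in $L^2$, uniformly). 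The averages depend linearly on the weight, so I would handle the three pieces separately.

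For the error term $f_{\er}$: since $T_1^nF_1\cdots T_\ell^nF_\ell$ is uniformly bounded in $L^\infty(\mu)$, the contribution of $f_{\er}$ to \eqref{E:multiplicative} is bounded in $L^2(\mu)$ by $\sup_N \frac1N\sum_{n\le N}|f_{\er}(n)|$, which the structure theorem makes arbitrarily small; this is routine. For the uniform term $f_{\un}$: here I would invoke the fact — a consequence of the work on characteristic factors for commuting transformations, specifically the Host--Kra / Leibman-type seminorm estimates extended to several commuting maps (as used for Tao's theorem) — that there exists $s=s(\ell)$ such that if $w\colon\Z_{\tN}\to\C$ is $1$-bounded then
\[
\Bigl\|\frac1N\sum_{n=1}^N w(n)\,T_1^nF_1\cdots T_\ell^nF_\ell\Bigr\|_{L^2(\mu)} \ll_{\ell} \|w\|_{U^s(\Z_{\tN})} + o_N(1),
\]
uniformly over $1$-bounded $F_i$. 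Applying this with $w=f_{\un}$ and using $\|f_{\un}\|_{U^s}$ small gives a small contribution. For the structured term $f_{\st}$: since $f_{\st}$ is (uniformly approximated by) a linear combination of periodic sequences $n\mapsto \e(an/q)$, it suffices to prove convergence of $\frac1N\sum_{n\le N}\e(an/q)\,T_1^nF_1\cdots T_\ell^nF_\ell$; splitting $n$ into residue classes mod $q$, these become linear combinations of averages of the form $\frac1N\sum_{n\le N} T_1^{qn+r}F_1\cdots T_\ell^{qn+r}F_\ell$, i.e.\ ordinary Tao-type averages along an arithmetic progression with the $F_i$ replaced by $T_i^r F_i$ and the $T_i$ by $T_i^q$, whose convergence in $L^2(\mu)$ is exactly Tao's theorem. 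A standard $\ve$-argument combining the three pieces (choosing the decomposition parameters so that $f_{\er}$ and $f_{\un}$ contributions are $<\ve$, then using convergence of the $f_{\st}$-average) shows the averages \eqref{E:multiplicative} form a Cauchy sequence in $L^2(\mu)$, hence converge.

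Finally, for the aperiodicity claim: aperiodic means precisely that $\frac1N\sum_{n\le N}f(n)\e(an/q)\to 0$ for every $a,q$. In the decomposition above, aperiodicity forces the structured part $f_{\st}$ to vanish in the relevant averaged sense (its periodic components have vanishing mean against $f$), so only the $f_{\un}$ and $f_{\er}$ contributions survive, both of which are arbitrarily small; hence the limit is $0$. The main obstacle I anticipate is not in the ergodic-theoretic seminorm estimate (which is available in the literature for commuting transformations) nor in the structure theorem (assumed), but in correctly matching the parameters: one must verify that the Gowers-norm exponent $s$ appearing in the multiple-average estimate is controlled by $\ell$ and is compatible with the exponent for which the structure theorem provides $U^s$-smallness, and that the passage from $[1,N]$ to the cyclic group $\Z_{\tN}$ (needed to even define the Gowers norms used in the structure theorem) does not disturb the ergodic averages — this is handled by a standard truncation, but requires care since $T_i$ are not periodic.
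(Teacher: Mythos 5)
Your overall architecture matches the paper's: decompose the weight via the structure theorem, kill the uniform part with the Gowers-norm estimate for multiple averages (Lemma~\ref{L:VDC} together with the $[N]$-versus-$\Z_{\tN}$ comparison of Lemma~\ref{L:compare}), and prove convergence for the structured part by reducing to Walsh/Tao-type convergence. The parameter-matching worries you flag at the end are real and are exactly what Lemma~\ref{L:compare} is for. However, your treatment of the structured component has a genuine gap, and it sits precisely where the hypothesis $f\in\CM_{conv}$ must enter --- a hypothesis your convergence argument never actually uses, even though the statement is false without it (take $f(n)=n^{it}$ and the trivial one-point system: already $\frac1N\sum_{n\le N}f(n)$ diverges).

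The gap is this. The structure theorem is a finitary, per-$N$ statement: for each $N$ it produces $f_{\tN,\st}=f_\tN*\phi_{\tN,\theta}$, which equals $\sum_{\xi\in\Xi_{\tN,\theta}}\widehat{f_\tN}(\xi)\,\widehat{\phi_{\tN,\theta}}(\xi)\,\e(n\xi/\tN)$, a linear combination whose coefficients \emph{and} frequencies both move with $N$. Two of your assertions about it are not right as stated. First, the frequencies $\xi/\tN$ are only major-arc, i.e.\ of the form $p/Q+\xi'/(Q\tN)$ with $|\xi'|\leq QV$; since $n$ ranges up to $N\asymp\tN/s$, the perturbing phase $\e(n\xi'/(Q\tN))$ varies by a non-negligible amount over $[N]$, so $f_{\tN,\st}$ is \emph{not} uniformly approximated by a fixed combination of exactly periodic sequences $\e(an/q)$; the paper disposes of these slowly varying phases by partial summation (Lemma~\ref{L:Partial'}), which itself requires the prior convergence of $\frac1N\sum_{n}\e(np/Q)\,V_n$ supplied by Walsh's theorem. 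Second, and more seriously, even after reducing to frequencies $p/Q$, the coefficients $\widehat{f_\tN}(\xi)$ depend on $N$, so convergence of $\frac1N\sum_n\e(np/Q)\,V_n$ alone does not make $A_N(f_{\tN,\st})$ Cauchy: you must additionally show that these major-arc Fourier coefficients of $f$ converge as $N\to\infty$. That is exactly Lemma~\ref{L:Fourier'}, and it is the one place the hypothesis $f\in\CM_{conv}$ is used; without it your $\ve$-argument only shows that $A_N(f)$ stays within $\ve$ of a sequence that may itself fail to converge. (Your aperiodic case is fine in spirit, though the paper's route is shorter: Theorem~\ref{T:aperiodic} gives $U^s$-smallness of $\one_{[N]}\cdot f$ directly, so no decomposition is needed there. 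A minor further discrepancy: the paper's decomposition has only two terms, with no $L^1$-small error term, but that difference is harmless.)
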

Let us briefly explain how we derive   Theorem~A and B from
Theorem~C. For $b\in \N$ we let $\zeta$ be a root of unity of order $b$ and let $f$  be the multiplicative function
 defined by $f(p^k)=\zeta$ for all primes $p$ and all $k\in \N$.
 Note that
\begin{equation}\label{E:key}
 \one_{S_{a,b}}(n)= \frac{1}{b}\sum_{j=0}^{b-1} \zeta^{-aj}(f(n))^j.
 \end{equation}
 It follows from Corollary~\ref{C:Halasz}
  that for $j=1,\ldots, b-1$ the multiplicative function  $f^j$ is aperiodic. Combining this with \eqref{E:key} and
  Theorem~C we get that the difference
  \begin{equation}\label{E:diff}
  \frac{1}{N}\sum_{n=1}^N  \one_{S_{a,b}}(n)\cdot T_1^nF_1\cdots T_\ell^nF_\ell -
  \frac{1}{bN}\sum_{n=1}^N T_1^nF_1\cdots T_\ell^nF_\ell
  \end{equation}
   converges to $0$ in $L^2(\mu)$. Using this and the aforementioned convergence result of T.~Tao we deduce
   Theorem~B. Furthermore, since the difference \eqref{E:diff} converges to $0$ in $L^2(\mu)$, letting $F_1=\ldots=F_\ell=\one_A$ where $\mu(A)>0$, integrating over $X$, and using the multidimensional Szemer\'edi theorem of Furstenberg and Katznelson, we  deduce Theorem~A.

   The proof of Theorem~C depends upon a deep structural result for multiplicative functions proved by the authors
   in \cite{FH14}. Roughly speaking, it asserts that the
 general  multiplicative function that is bounded by $1$ can be decomposed in two terms, one that is approximately periodic and another that contributes negligibly to the   averages \eqref{E:multiplicative}.
 The approximately periodic component vanishes if the multiplicative function is aperiodic. In the general case, a  careful  analysis of the contribution of the  structured component allows us to conclude the proof of Theorem~C.

  We note that if one is only interested in the  weak convergence of the   averages \eqref{E:multiplicative}, an alternate (and arguably simpler)  approach is to use a decomposition result for multiple correlation sequences from \cite{F14}; we discuss this approach in more detail in Section~\ref{SS:weak}.

\subsection{Recurrence and convergence results}\label{SS:results}
Our main results cover a vastly more general setting than the one described in the previous subsection.
In order to  facilitate exposition we introduce some definitions and notation. We start with some notions from number theory
related to multiplicative functions.
\begin{definition} A function $f\colon \N\to \C$ is called \emph{multiplicative} if
 $$
 f(mn)=f(m)f(n) \ \text{ whenever } \  (m,n)=1.
 $$
We let
$$
\CM:=\{f\colon \N\to \C \text{ multiplicative such that  } |f(n)|\leq 1 \text{ for every } n\in \N\}
$$
and
$$
\CM_{conv}:=\Big\{f\in \CM\colon \lim_{N\to\infty}\frac{1}{N} \sum_{n=1}^Nf(an+b) \text{ exists for every } a,b\in \N\Big\}.
$$
We say that $f\in \CM$ is \emph{aperiodic} if
$\lim_{N\to\infty}\frac{1}{N} \sum_{n=1}^Nf(an+b)=0$  for every $a,b\in \N$.
\end{definition}
If a multiplicative function takes real values, then a well known theorem of E.~Wirsing~\cite{Wi67} states that it has a mean value; furthermore it belongs to $\CM_{conv}$. But there exist complex valued multiplicative functions that do not have a mean value, for example if $f(n)=n^{it}$ for some $t\neq 0$;
then  $ \frac{1}{N}\sum_{n=1}^Nf(n)=\frac{ N^{it}}{1+it}+o_{N\to \infty}(1)$.
Lending terminology from \cite{GS15}, it can be shown that  $f\in \CM_{conv}$ unless $f(n)$ ``pretends'' to be $n^{it}\chi(n)$ for some $t\in \R$ and Dirichlet character $\chi$. Necessary and sufficient conditions for checking when a multiplicative function belongs to  the set $\CM_{conv}$  can be found  in Theorem~\ref{T:Converge} below.

Next we introduce some notions from ergodic theory.
\begin{definition}
$\bullet$ A bounded sequence of complex numbers $(w(n))$ is a {\em good universal weight for polynomial  multiple mean convergence} if for every $\ell,m \in \N$, probability space $(X, \CX,\mu)$,  invertible commuting measure preserving transformations $T_1,\ldots, T_\ell\colon X\to X$, functions $F_1,\ldots, F_m\in L^\infty(\mu)$, and polynomials $p_{i,j}\colon \Z \to\Z$, $i=1,\ldots \ell$, $j=1,\ldots, m$,  the averages
$$
\frac{1}{N}\sum_{n=1}^N  w(n)\cdot   (\prod_{i=1}^\ell T_i^{p_{i,1}(n)})
F_1\cdot \ldots \cdot (\prod_{i=1}^\ell T_i^{p_{i,m}(n)})
F_m
$$
converge in $L^2(\mu)$, where $\prod_{i=1}^\ell S_i$ denotes the composition $S_1\circ\cdots\circ S_\ell$.
A  set of integers $S$ is {\em a set of polynomial  multiple mean convergence} if the sequence $(\one_S(n))$
is a good universal weight for polynomial multiple mean convergence.

$\bullet$ A set of integers $S$ is {\em set of polynomial  multiple recurrence} if for every $\ell,m \in \N$, probability space $(X, \CX,\mu)$,  invertible commuting measure preserving transformations $T_1,\ldots, T_\ell\colon X\to X$, set $A\in \CX$ with $\mu(A)>0$, and polynomials $p_{i,j}\colon \Z \to\Z$, $i=1,\ldots \ell$, $j=1,\ldots, m$, with $p_{i,j}(0)=0$,  we have
\begin{equation}\label{E:recurrence}
\mu\big((\prod_{i=1}^\ell T_i^{p_{i,1}(n)})A
\cap  \cdots \cap (\prod_{i=1}^\ell T_i^{p_{i,m}(n)})
A\big)>0
\end{equation}
for a set of $n\in S$ with positive lower density.
\end{definition}
 \begin{remark}
 All the statements in this article  refer to sets of integers $S$ with positive density, hence there is no need to normalize the relevant averages. Furthermore, although we always work under the assumption that the measure preserving transformations commute, with some additional work
 our arguments extend to the case where the transformations generate a nilpotent group; we discuss this in more detail in  Section~\ref{SS:nilpotent}.
\end{remark}
Our first result generalizes Theorem~C  from the introduction.
\begin{theorem}\label{T:1}
Let $f\in \CM_{conv}$ be a multiplicative function. Then the sequence $(f(n))$ is a good universal weight for polynomial multiple
 mean convergence. Furthermore, if $f$ is aperiodic, then the corresponding weighted ergodic averages  converge to $0$ in $L^2(\mu)$.
\end{theorem}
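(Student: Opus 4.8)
The plan is to deduce Theorem~\ref{T:1} from the structural result for bounded multiplicative functions from \cite{FH14} together with the already-known unweighted polynomial convergence theory (Walsh's theorem) and the nilpotent-specific tools in ergodic theory. I would first reduce to the case $|f(n)|\le 1$, which is automatic since $f\in\CM\subset\CM_{conv}$. The core input is the decomposition $f=f_{\st}+f_{\un}+f_{\er}$ on a suitable finite cyclic group $\Z_{\tN}$ (or along intervals $[1,N]$), where $f_{\st}$ is approximately periodic (close to a combination of periodic sequences, in fact a ``structured'' nilsequence-like piece of controlled complexity), $f_{\un}$ has small Gowers uniformity norm of the appropriate degree, and $f_{\er}$ is small in $\ell^1$-average. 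The degree is dictated by the polynomials $p_{i,j}$: one invokes a polynomial generalization of the inverse/ van der Corput ``PET induction'' machinery to show that the weighted averages are controlled by a Gowers--Host--Kra seminorm of $w$ of some finite degree $s=s(\ell,m,\deg)$, hence the $f_{\un}$ contribution is negligible, and the $f_{\er}$ contribution is trivially negligible.

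Concretely, the key steps in order are: (i) By a standard argument reduce the convergence statement to showing that the averages form a Cauchy sequence in $L^2(\mu)$; by PET induction / van der Corput (as in the proof of Walsh's theorem, but tracking the weight) show that for a bounded weight $w$, the $L^2$-norm of the weighted polynomial average is bounded by a finite-degree uniformity norm $\nnorm{w}_{U^s[N]}$ plus an error tending to $0$. (ii) Apply the \cite{FH14} structure theorem to $f$: for any $\ve>0$ and $N$ large there is a decomposition $f=f_{\st}+f_{\un}+f_{\er}$ with $\nnorm{f_{\un}}_{U^s[N]}\le\ve$, $\frac1N\sum_{n\le N}|f_{\er}(n)|\le\ve$, and $f_{\st}$ a ``structured'' sequence of complexity bounded in terms of $\ve$ (a periodic sequence, or more precisely an average of $e(n\alpha)$-type terms with controlled denominators, plus a correction). (iii) Handle the structured part: $f_{\st}$ being (approximately) periodic, the weighted average $\frac1N\sum f_{\st}(n)\,(\prod T_i^{p_{i,1}(n)})F_1\cdots$ splits over residue classes $n\equiv r\pmod q$, and on each class one again invokes unweighted polynomial convergence (Walsh) for the polynomials $n\mapsto p_{i,j}(qn+r)$; summing over $r$ gives convergence of the $f_{\st}$-weighted average. (iv) Since the $f_{\un}$ and $f_{\er}$ contributions are $O(\ve)$ uniformly in $N$ and the $f_{\st}$ contribution converges, a $3\ve$-argument yields that the full average is Cauchy, hence converges. (v) For the aperiodic case, use the fact proved in \cite{FH14} that aperiodicity of $f$ forces the structured component $f_{\st}$ to be negligible (its mean along every arithmetic progression vanishes), so only the $O(\ve)$ terms survive, and the limit is $0$.

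The main obstacle will be step (i)--(ii), namely establishing that the weighted polynomial averages are genuinely controlled by a \emph{single} finite-degree Gowers uniformity norm of the weight, and that this is exactly the kind of control the structure theorem of \cite{FH14} produces. The PET-induction/van der Corput argument has to be run carefully so that the weight $w$ is never differenced together with the functions $F_i$ in a way that would require a higher-step seminorm of $w$ than available, and one must match the ``interval'' uniformity norm $U^s[N]$ used in \cite{FH14} with whatever seminorm the ergodic argument produces (passing through $\Z_{\tN}$ for a prime $\tN\sim N$ is the usual device). A secondary technical point is that in the polynomial (as opposed to linear) setting the relevant degree $s$ grows with the degrees of the $p_{i,j}$, so one needs the \cite{FH14} decomposition to be available at every finite level $s$ with the stated quantitative bounds; this is fine but must be stated precisely. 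The structured-part step (iii) is then routine given Walsh's theorem, and the aperiodic conclusion (v) is a clean corollary once the structured part is understood.
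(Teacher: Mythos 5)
Your overall strategy coincides with the paper's: control the weighted polynomial averages by a finite-degree Gowers uniformity norm of the weight (the paper quotes exactly such an estimate, \cite[Lemma~3.5]{FHK13}, as Lemma~\ref{L:VDC}), apply the structure theorem of \cite{FH14} to split $f$ into a structured plus uniform part on $\Z_\tN$ for a suitable prime $\tN\sim sN$, discard the uniform part, and reduce the structured part to Walsh's theorem. The aperiodic case is also handled in the same spirit. However, your step (iii) has a genuine gap, and it sits exactly where the hypothesis $f\in\CM_{conv}$ must enter.

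The structured component produced by Theorem~\ref{T:Structure} is not a fixed (approximately) periodic function: it is $f_{\tN,\st}=f_{\tN}*\phi_{\tN,\theta}$, a function that changes with $N$, equal to a sum over the major-arc frequencies $\xi=\frac{p}{Q}\tN+\frac{\xi'}{Q}$ of terms $\widehat{f_{\tN}}(\xi)\,\widehat{\phi_{\tN,\theta}}(\xi)\,\e\bigl(n\frac{\xi}{\tN}\bigr)$. Splitting into residue classes mod $Q$ and applying Walsh on each class, as you propose, only shows that for each fixed $N$ the structured average is a linear combination of convergent unweighted averages; for the full average to converge as $N\to\infty$ you must additionally prove that the $N$-dependent coefficients $\widehat{f_{\tN}}\bigl(\frac{p}{Q}\tN+\frac{\xi'}{Q}\bigr)$ themselves converge. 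This is the paper's Lemma~\ref{L:Fourier'}, and it is precisely where $f\in\CM_{conv}$ is used (via convergence of $\frac1N\sum_{n\le N} f(Qn+r)$, combined with the partial-summation Lemma~\ref{L:Partial'} to absorb the slowly varying phases $\e\bigl(n\frac{\xi'}{Q\tN}\bigr)$ --- another reason $f_{\st}$ is not literally periodic, and one that also affects the ergodic averages themselves in Lemma~\ref{L:Structured'}). As written, your outline never invokes $f\in\CM_{conv}$ in the convergence argument, so it would ``prove'' convergence for every $f\in\CM$, which is false: for $f(n)=n^{it}$ the structured part oscillates and the averages do not converge. Your step (v) for the aperiodic case is fine and is equivalent to the paper's direct appeal to Theorem~\ref{T:aperiodic} together with Lemma~\ref{L:VDC}; the extra $\ell^1$-small error term in your three-part decomposition is a harmless difference of formulation.
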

\begin{remark}
Examples of periodic systems show that one does not have convergence if  $f\notin\CM_{conv}$.
Nevertheless, following the method of \cite{FH15} it is possible to show that for every $f\in \CM$ there exist $t\in \R$ and a slowly varying sequence $\eta(n)$ (meaning $\max_{x\leq n\leq x^2}|\eta(n)-\eta(x)|\to 0$ as $x\to \infty$), where both $t$ and $\eta$ depend only on $f$,  such that  the corresponding weighted ergodic averages multiplied by
 $N^{-it}e(-\eta(n))$ converge in the mean.
\end{remark}
If $S$ is the set of square-free integers, applying Theorem~\ref{T:1} for the multiplicative  function $f:=\one_{S}$
we deduce that $S$ is a set of polynomial multiple mean convergence.\footnote{This can also be deduced
directly from Theorem~\ref{T:Walsh} by approximating  $S$ in density by periodic sets.}

Next, we give generalizations of Theorems~A and B from the introduction. They are consequences of
a result that we state next. A subset of the unit interval or the unit circle is called \emph{Riemann-measurable} if its indicator function is a Riemann integrable function. It is known that this condition is equivalent to having boundary of Lebesgue measure $0$.
\begin{theorem}\label{T:2}
Let $f\in \CM$ be a multiplicative function taking values on the unit circle.
 \begin{enumerate}
\item
   If for some $k\in \N$ the function $f$ takes values on $k$-th roots of unity  and $K$ is a non-empty subset of its range, then
  $f^{-1}(K)$ is a  set of polynomial multiple mean  convergence.
If in addition  $f^j$ is aperiodic for $j=1,\ldots, k-1$, then any shift of the set $f^{-1}(K)$  is a set of polynomial  multiple recurrence.

\item If $f^j$ is aperiodic for all $j\in \N$ and  $K$ is a Riemann-measurable subset of the unit circle of positive measure,  then any shift of the set
$f^{-1}(K)$ is a set of  polynomial  multiple recurrence and  polynomial multiple mean convergence.
\end{enumerate}
In fact, under the aperiodicity assumptions of part  $(i)$ or $(ii)$ we get that the set $f^{-1}(K)$  is Gowers uniform
(see definition in Section~\ref{SS:Gowers}).
\end{theorem}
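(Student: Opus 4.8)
The plan is to derive everything from Theorem~\ref{T:1}, Walsh's polynomial mean convergence theorem (Theorem~\ref{T:Walsh}), the polynomial Szemer\'edi theorem of Bergelson and Leibman, and the fact --- extracted from the structure theorem of \cite{FH14} exactly as in the proof of Theorem~\ref{T:1} --- that every aperiodic $f\in\CM$ is Gowers uniform of all orders (its approximately periodic component vanishes, and what remains is small in every uniformity norm relevant to polynomial multiple ergodic averages). Throughout, $\delta$ denotes the density of $f^{-1}(K)$ and $m$ the Lebesgue measure on $\T$ normalized to total mass $1$. For the convergence statement in $(i)$ I would use the roots-of-unity filter: if $f$ takes values in the group of $k$-th roots of unity and $K$ is a nonempty subset of its range, expanding $\one_K$ in the characters of that cyclic group gives constants $c_0,\dots,c_{k-1}$ with $c_0=|K|/k$ and, after substituting $z=f(n)$,
$$
\one_{f^{-1}(K)}(n)=\sum_{j=0}^{k-1}c_j\,f^j(n).
$$
For $1\le j\le k-1$ the function $f^j$ is multiplicative, of modulus $1$ and of finite range, hence it cannot pretend to be $n^{it}\chi(n)$ with $t\neq 0$, so $f^j\in\CM_{conv}$ by Theorem~\ref{T:Converge} and $(f^j(n))$ is a good universal weight by Theorem~\ref{T:1}; the term $j=0$ is the constant weight $c_0$, handled by Theorem~\ref{T:Walsh}. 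A finite linear combination of good universal weights is one, so $f^{-1}(K)$ is a set of polynomial multiple mean convergence.

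Next I would prove the final ``in fact'' assertion, namely that under the aperiodicity hypotheses $\one_{f^{-1}(K)}-\delta$ is Gowers uniform of all orders. In case $(i)$ one has $\delta=c_0$ and $\one_{f^{-1}(K)}-\delta=\sum_{j=1}^{k-1}c_j f^j$, a finite linear combination of aperiodic (hence Gowers uniform) multiplicative functions, so it is Gowers uniform. In case $(ii)$, fix $\ve>0$; Riemann measurability of $K$ yields trigonometric polynomials $P_-\le\one_K\le P_+$ on $\T$ with $\int(P_+-P_-)\,dm<\ve$. Substituting $z=f(n)$ (interpreting $f(n)^{-j}=\overline{f^{\,j}(n)}$ for $j\ge1$, since $|f(n)|=1$), the non-constant Fourier modes contribute $\sum_{j\neq0}a_j f^j(n)$, which is Gowers uniform because each $f^j$, $j\neq0$, is aperiodic (conjugates of aperiodic functions are aperiodic). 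Since each such $f^j$ has zero mean value, $\frac1N\sum_{n\le N}(P_+-P_-)(f(n))\to\int(P_+-P_-)\,dm<\ve$, so $\one_{f^{-1}(K)}=P_-(f(\cdot))+e$, where $P_-(f(\cdot))$ minus its constant mean is Gowers uniform, that mean is within $\ve$ of $m(K)$, and $0\le e(n)$ with $\limsup_N\frac1N\sum_{n\le N}e(n)<\ve$. Since a bounded sequence small in density has small $U^s([N])$ norm for every $s$, and $U^s([N])$ is subadditive, letting $\ve\to0$ shows simultaneously that $\delta=m(K)$ and that $\one_{f^{-1}(K)}-\delta$ is Gowers uniform of all orders.

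Finally I would deduce recurrence and mean convergence for arbitrary shifts. Gowers norms are translation invariant up to an $O(1/N)$ error, so every shift of $f^{-1}(K)$ is again Gowers uniform of all orders; and a Gowers-uniform-of-all-orders sequence is a negligible weight for polynomial multiple ergodic averages (this is precisely what underlies Theorem~\ref{T:1}; see also Section~\ref{SS:Gowers}). Hence the weighted averages with weight $\one_{f^{-1}(K)+r}-\delta$ converge to $0$ in $L^2(\mu)$, and combined with Theorem~\ref{T:Walsh} for the unweighted averages this gives polynomial multiple mean convergence for every shift of $f^{-1}(K)$. For recurrence, take all $F_i=\one_A$ with $\mu(A)>0$ and polynomials $p_{i,j}$ with $p_{i,j}(0)=0$; integrating over $X$,
$$
\frac1N\sum_{n\in(f^{-1}(K)+r)\cap[1,N]}\mu\Big(\bigl(\textstyle\prod_{i}T_i^{p_{i,1}(n)}\bigr)A\cap\cdots\cap\bigl(\prod_{i}T_i^{p_{i,m}(n)}\bigr)A\Big)\ \longrightarrow\ \delta\cdot\lim_{N\to\infty}\frac1N\sum_{n=1}^N\mu(\cdots),
$$
where the right-hand limit exists by Theorem~\ref{T:Walsh} and is strictly positive by the polynomial Szemer\'edi theorem of Bergelson and Leibman; since each summand lies in $[0,1]$, the set of $n\in f^{-1}(K)+r$ for which the measure is positive must have positive lower density.

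The one genuinely hard ingredient is the cited fact that aperiodic bounded multiplicative functions are Gowers uniform of all orders: this is exactly where higher order Fourier analysis (the inverse theorem for the Gowers norms, together with the analysis of correlations of multiplicative functions in \cite{FH14}) is needed, and it is already in place from the proof of Theorem~\ref{T:1}. Everything else is the bookkeeping described above: the roots-of-unity filter for the finite-range case, a Riemann-approximation squeeze for the general Riemann-measurable case, and translation invariance of the Gowers norms to pass from $f^{-1}(K)$ to its arbitrary shifts.
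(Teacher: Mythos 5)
Your proposal is correct and follows essentially the same route as the paper: the roots-of-unity filter combined with Theorem~\ref{T:1} for part (i), reduction of everything else to Gowers uniformity of $\one_{f^{-1}(K)}-\delta$ via aperiodicity of the powers $f^j$ (Theorem~\ref{T:aperiodic}), a trigonometric-polynomial approximation of $\one_K$ for part (ii), and the passage from Gowers uniformity to recurrence and convergence for all shifts via the uniformity estimates together with Walsh and Bergelson--Leibman (this last step is exactly Proposition~\ref{P:Gowers}). The only cosmetic difference is that you squeeze $\one_K$ between trigonometric majorants and minorants, whereas the paper uses a single $L^1(m_\T)$-approximation with zero constant term combined with the equidistribution of $(f(n))$ on the circle; both hinge on the same estimate $\norm{a}_{U^s(\Z_N)}^{2^s}\leq \norm{a}_{L^\infty(\Z_N)}^{2^s-1}\norm{a}_{L^1(\Z_N)}$.
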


We denote by $\omega(n)$ the number of distinct prime factors of an integer $n$ and  by $\Omega(n)$ the number of prime factors of $n$ counted with  multiplicity.
We let
$$
S_{\omega,A,b}:=\{n\in \N\colon \omega(n)\equiv a\! \! \! \mod b \text{ for some } a\in A\}
$$
and similarly we define
$S_{\Omega, A,b}$.
\begin{corollary}\label{C:2}
For every $b\in \N$ and $A\subset \{0,\ldots, b-1\}$  non-empty,
     any shift of the sets  $S_{\omega, A,b}$ and   $S_{\Omega, A,b}$
     is a set of polynomial  multiple recurrence and  polynomial multiple mean convergence. In fact, all these  sets are Gowers uniform.
\end{corollary}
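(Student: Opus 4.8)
The plan is to realize both families of sets as level sets of a multiplicative function valued in roots of unity, to verify the aperiodicity hypothesis of Theorem~\ref{T:2}$(i)$ by hand, and then to read off everything from Theorem~\ref{T:2}; the only input beyond routine bookkeeping is the number-theoretic aperiodicity of the relevant powers, which is exactly what Corollary~\ref{C:Halasz} provides.

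Fix $b\in\N$ and a non-empty $A\subset\{0,\ldots,b-1\}$, let $\zeta$ be a fixed primitive $b$-th root of unity, and set $K:=\{\zeta^a\colon a\in A\}$. First I would introduce the multiplicative function $f\in\CM$ determined by $f(p^k)=\zeta$ for every prime $p$ and every $k\in\N$, so that $f(n)=\zeta^{\omega(n)}$, and the completely multiplicative function $g\in\CM$ determined by $g(p)=\zeta$ for every prime $p$, so that $g(n)=\zeta^{\Omega(n)}$. Both take values on $b$-th roots of unity, and since $\omega$ and $\Omega$ assume every non-negative integer value, the range of each is the full group of $b$-th roots of unity; in particular $K$ is a non-empty subset of the range of $f$ and of $g$, and unwinding the definitions gives $f^{-1}(K)=S_{\omega,A,b}$ and $g^{-1}(K)=S_{\Omega,A,b}$.

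Next I would check that $f^j$ and $g^j$ are aperiodic for $j=1,\ldots,b-1$. For such $j$ one has $\zeta^j\neq1$, and $f^j$ is the multiplicative function taking the constant unimodular value $\zeta^j$ on all prime powers, while $g^j$ is the completely multiplicative function taking the constant value $\zeta^j$ on all primes; in either case the function does not ``pretend'' to be $n^{it}\chi(n)$ for any $t\in\R$ and Dirichlet character $\chi$, since a fixed unimodular value different from $1$ on the primes cannot agree with $p^{it}\chi(p)$ on a set of primes of relative density one (for $t\neq0$ the factors $p^{it}$ equidistribute, and for $t=0$ the character $\chi$ takes each of its values on a positive proportion of the primes). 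Hence Corollary~\ref{C:Halasz} yields that $f^j$ and $g^j$ are aperiodic for $j=1,\ldots,b-1$; this is the same computation already carried out for the sets $S_{a,b}$ in the introduction.

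With these two facts, Theorem~\ref{T:2}$(i)$ applies with $k=b$ to $f$ and to $g$: it gives that $S_{\omega,A,b}=f^{-1}(K)$ and $S_{\Omega,A,b}=g^{-1}(K)$ are sets of polynomial multiple mean convergence, that every shift of each of these sets is a set of polynomial multiple recurrence, and, by the last sentence of Theorem~\ref{T:2}, that $S_{\omega,A,b}$ and $S_{\Omega,A,b}$ themselves are Gowers uniform. To finish I would upgrade the remaining two assertions to arbitrary shifts. For mean convergence I would expand $\one_{S_{\omega,A,b}}=\frac1b\sum_{j=0}^{b-1}c_j f^j$ as in \eqref{E:key} (and similarly for $g$), and observe that for a fixed $h\in\Z$ the weight $(f^j(n+h))_n$ is again a good universal weight for polynomial multiple mean convergence: for $j=0$ it is constant, and for $1\le j\le b-1$ one reindexes the defining average by $m=n+h$, absorbing $h$ into the polynomials $p_{i,l}(\cdot - h)$, and invokes Theorem~\ref{T:1} (whose limit is in fact $0$, since $f^j$ is aperiodic); by linearity the shifted indicator is then a good universal weight, so every shift of $S_{\omega,A,b}$ and of $S_{\Omega,A,b}$ is a set of polynomial multiple mean convergence. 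For Gowers uniformity I would note that the relevant uniformity seminorms of a sequence over an interval of length $N$ change by only an $O(h/N)$ error under translation by a fixed $h$, so every shift of a Gowers uniform set is Gowers uniform. The main obstacle is the aperiodicity step for $f^j$ and $g^j$; it is precisely what Corollary~\ref{C:Halasz} is built to handle, and once it is granted everything else is a routine application of Theorem~\ref{T:2} together with the elementary shift arguments above.
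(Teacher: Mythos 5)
Your proof is correct and follows essentially the same route as the paper: the same two multiplicative functions ($f(p^k)=\zeta$ for $S_{\omega,A,b}$ and the completely multiplicative $g$ with $g(p)=\zeta$ for $S_{\Omega,A,b}$), aperiodicity of their powers via Proposition~\ref{C:Halasz}(ii), and a direct application of Theorem~\ref{T:2}(i) with $K=\{\zeta^a\colon a\in A\}$. The additional arguments you give to transfer mean convergence and Gowers uniformity to arbitrary shifts are sound but unnecessary: once the sets are known to be Gowers uniform, Proposition~\ref{P:Gowers} already yields that every shift is a set of polynomial multiple recurrence and polynomial multiple mean convergence.
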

\begin{remark}
As the polynomial $2^an^b$ takes values in $S_{\Omega, a,b}$, the multiple recurence property \eqref{E:recurrence} for some $n\in S_{\Omega, a,b}$
 can be inferred  from the polynomial
Szemer\'edi theorem. This argument does not apply for non-trivial shifts of the
sets $S_{\Omega, a,b}$   and $S_{\omega, a,b}$. On the other hand, see  Section~\ref{SS:altrec} for an alternate argument that can be used  to prove
 ``linear'' multiple recurrence statements by finding  $\text{IP}_k$-patterns within
any shift of  $S_{\omega, a,b}$ and $S_{\Omega, a,b}$.
\end{remark}

For $\alpha \in \R$ and $A\subset [0,1/2]$ we let
$$
S_{\omega,A, \alpha}:=\{n\in \N\colon \norm{\omega(n)\alpha}\in A\}, \quad
S_{\Omega,A,  \alpha}:=\{n\in \N\colon \norm{\Omega(n)\alpha} \in A\},
$$
where  $\norm{x}:=d(x,\Z)$ for $x\in \R$.


\begin{corollary}\label{C:3}
For every irrational $\alpha $ and Riemann-measurable set  $A\subset [0,1/2]$ of positive measure,
 any shift of the sets $S_{\omega,A, \alpha}$ and   $S_{\Omega, A, \alpha}$
     is a set of polynomial  multiple recurrence and  mean convergence. In fact, all these  sets are Gowers uniform.
\end{corollary}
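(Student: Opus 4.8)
The plan is to obtain Corollary~\ref{C:3} as a direct application of Theorem~\ref{T:2}(ii), in the same spirit in which Theorems~A and~B were deduced from Theorem~C in the introduction; the new feature is that since $\alpha$ is irrational and $A$ is a general Riemann-measurable set we cannot reduce matters to a finite combination of characters, which is exactly why part~(ii) of Theorem~\ref{T:2}, dealing with arbitrary Riemann-measurable targets, is the right tool. I will treat $S_{\omega,A,\alpha}$; the argument for $S_{\Omega,A,\alpha}$ is identical with $\Omega$ in place of $\omega$. Write $e(t):=e^{2\pi it}$. Let $f\in\CM$ be the multiplicative function determined by $f(p^k):=e(\alpha)$ for every prime $p$ and every $k\in\N$; since $\omega$ is additive on coprime arguments and $\omega(p^k)=1$, this gives $f(n)=e(\omega(n)\alpha)$, and $f$ takes values on the unit circle $\T$. (For $S_{\Omega,A,\alpha}$ one takes instead the completely multiplicative $f$ with $f(p):=e(\alpha)$, so that $f(n)=e(\Omega(n)\alpha)$.)

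Next I would set $K:=\{e(t)\colon t\in\R,\ \norm{t}\in A\}\subset\T$. Since $\norm{\cdot}$ depends only on its argument modulo $1$, $K$ is well defined and, by construction, $n\in S_{\omega,A,\alpha}$ if and only if $f(n)\in K$; that is, $S_{\omega,A,\alpha}=f^{-1}(K)$. The point to check is that $K$ is a Riemann-measurable subset of $\T$ of positive Haar measure. Identifying $\T$ with $\R/\Z$ and writing $\psi(t):=\norm{t}$, the map $\psi\colon\R/\Z\to[0,1/2]$ is continuous, so $\partial K=\partial\psi^{-1}(A)\subseteq\psi^{-1}(\partial A)$; moreover $\psi$ is two-to-one, so the pushforward of Haar measure under $\psi$ equals twice Lebesgue measure on $[0,1/2]$. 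Hence $\lambda(\partial K)\le 2\,\lambda(\partial A)=0$ and $\lambda(K)=2\,\lambda(A)>0$, as required.

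It then remains to verify the aperiodicity hypothesis of Theorem~\ref{T:2}(ii). For each $j\in\N$ the power $f^j$ is again a multiplicative function that is constant equal to $e(j\alpha)$ on all prime powers (respectively the completely multiplicative function with value $e(j\alpha)$ on primes). Since $\alpha$ is irrational, $j\alpha\notin\Z$, hence $f^j(p)=e(j\alpha)\neq 1$ for every prime $p$, and Corollary~\ref{C:Halasz} (the Halász-type aperiodicity criterion, used in the introduction for exactly this purpose) then yields that $f^j$ is aperiodic. With $f$ unimodular, $f^j$ aperiodic for all $j\in\N$, and $K$ Riemann-measurable of positive measure, Theorem~\ref{T:2}(ii) applies and gives that every shift of $f^{-1}(K)=S_{\omega,A,\alpha}$ is a set of polynomial multiple recurrence and polynomial multiple mean convergence, and that $f^{-1}(K)$ is Gowers uniform; since the Gowers uniformity norms are unchanged by a fixed translation, the shifts are Gowers uniform as well. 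The same argument with $\Omega$ finishes the proof.

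Everything above is routine once Theorem~\ref{T:2}(ii) is available, so I do not anticipate a serious obstacle. The two places that require a little care are the transfer of Riemann-measurability and positivity of measure from $A\subset[0,1/2]$ to $K\subset\T$ — which rests only on the elementary fact that the tent map $t\mapsto\norm{t}$ is continuous, two-to-one, and sends Haar measure to a multiple of Lebesgue measure — and confirming that Corollary~\ref{C:Halasz} is stated in enough generality to cover an arbitrary unimodular value $e(j\alpha)\neq 1$ (i.e. that such an $f^j$ cannot "pretend" to be $n^{it}\chi(n)$), not merely roots of unity; both are expected to be straightforward.
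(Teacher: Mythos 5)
Your proposal is correct and follows essentially the same route as the paper: choose $f(p^k)=\e(\alpha)$ for $S_{\omega,A,\alpha}$ (respectively the completely multiplicative $f$ with $f(p)=\e(\alpha)$ for $S_{\Omega,A,\alpha}$), take $K=\{\e(t)\colon \norm{t}\in A\}$, get aperiodicity of all powers $f^j$ from part~(ii) of Proposition~\ref{C:Halasz} since $j\alpha\notin\Z$, and apply part~(ii) of Theorem~\ref{T:2}. Your explicit verification that $K$ is Riemann-measurable of positive measure is a detail the paper leaves implicit, but it is exactly the intended justification.
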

\begin{remark}
 Similar results hold if  $S_{\omega,A, \alpha}$ and  $S_{\Omega,A, \alpha}$ are defined using fractional parts.
\end{remark}


\subsection{Combinatorial implications}\label{SS:combinatorics}
We give some combinatorial implications of the previous
multiple recurrence results. We define the {\em
upper Banach density} $d^*(E)$ of a set $E\subset\Z^\ell$
as $d^*(E):= \limsup_{|I|\to\infty}\frac{|E\cap I|}{|I|}$,
where the $\limsup$ is taken over all parallelepipeds $I\subset\Z^\ell$
whose side lengths tend to infinity.
We use the following
 modification of the  correspondence principle of H.~Furstenberg  (the proof can be found  in~\cite{BL96}):
\begin{Correspondence1}[\cite{Fu77}]
Let $\ell\in \N$ and  $\E\subset \Z^\ell$. There exist a probability
space $(X, \CX, \mu)$,  invertible commuting measure preserving
transformations $T_1, \ldots, T_\ell\colon X\to X$,
 and a  set $A\in\CX$ with $\mu(A)=d^*(E)$, such that
$$
   d^*((E-\vec n_1)\cap\ldots\cap
   (E-\vec n_m))\geq
   \mu\bigl((\prod_{i=1}^\ell T_i^{n_{i,1}})A\cap
\ldots \cap (\prod_{i=1}^\ell T_i^{n_{i,m}})A
\bigr)
$$
for all $m\in\N$ and
$\vec n_j=(n_{1,j},\ldots,n_{\ell,j}) \in\Z^\ell$ for $j=1,\ldots,m$.
\end{Correspondence1}
Using this result and
Corollaries~\ref{C:2} and \ref{C:3}, we immediately deduce the following:
\begin{theorem}
\label{T:combinatorics}
Let $\ell,m\in \N$, $\vf_1, \ldots, \vf_m\colon\Z\to\Z^\ell$ be polynomials with $\vf_i(0)=\vec 0$ for
$i=1, \ldots, m$, and let
$E\subset\Z^\ell$ with
$d^*(E) > 0$.  Then the set
$$
\big\{n\in \N\colon
 d^*\bigl( (E-\vf_1(n))\cap\ldots\cap (E-\vf_m(n))
\bigr)>0
\big\}
$$
intersects any shift  of the sets $S_{\omega, A,b}$,  $S_{\Omega, A,b}$,  $S_{\omega,A, \alpha}$,   $S_{\Omega, A, \alpha}$
(we assume that the set $A$ satisfies the assumptions of Corollaries~\ref{C:2} and \ref{C:3}) on a set of positive lower density.
\end{theorem}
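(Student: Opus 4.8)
The plan is to obtain Theorem~\ref{T:combinatorics} by feeding the recurrence conclusions of Corollaries~\ref{C:2} and~\ref{C:3} into the Furstenberg Correspondence Principle quoted above. Since all the real content is already packaged inside Theorem~\ref{T:2} and its corollaries, the argument is essentially a matter of lining up the two formalisms, and I do not expect a substantial obstacle — only some bookkeeping.

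First I would apply the Furstenberg Correspondence Principle to the set $E\subset\Z^\ell$. This produces a probability space $(X,\CX,\mu)$, invertible commuting measure preserving transformations $T_1,\ldots,T_\ell\colon X\to X$, and a set $B\in\CX$ with $\mu(B)=d^*(E)>0$ such that
$$
d^*\bigl((E-\vec n_1)\cap\cdots\cap(E-\vec n_m)\bigr)\ \geq\ \mu\bigl((\prod_{i=1}^\ell T_i^{n_{i,1}})B\cap\cdots\cap(\prod_{i=1}^\ell T_i^{n_{i,m}})B\bigr)
$$
for every choice of $\vec n_j=(n_{1,j},\ldots,n_{\ell,j})\in\Z^\ell$. (I write $B$ for the measurable set to avoid clashing with the arithmetic parameter $A$ appearing in $S_{\omega,A,b}$, etc.) The key feature is that this inequality runs in the convenient direction: a positive lower bound for the ergodic intersection measure on the right forces the shifted intersection of $E$ on the left to have positive upper Banach density.

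Next I would write each vector polynomial in coordinates, $\vf_j(n)=(p_{1,j}(n),\ldots,p_{\ell,j}(n))$ with $p_{i,j}\colon\Z\to\Z$ polynomials, noting that the hypothesis $\vf_j(0)=\vec 0$ gives $p_{i,j}(0)=0$ for all $i,j$, which is exactly the condition in the definition of a set of polynomial multiple recurrence. Then, letting $S$ be any shift of one of the sets $S_{\omega, A,b}$, $S_{\Omega, A,b}$, $S_{\omega,A, \alpha}$, $S_{\Omega, A, \alpha}$ (with $A$ as in Corollaries~\ref{C:2} and~\ref{C:3}), I would invoke those corollaries to conclude that $S$ is a set of polynomial multiple recurrence and apply the defining property to the system $(X,\CX,\mu,T_1,\ldots,T_\ell)$, the set $B$, and the polynomials $p_{i,j}$. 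This yields a set of $n\in S$ of positive lower density for which
$$
\mu\bigl((\prod_{i=1}^\ell T_i^{p_{i,1}(n)})B\cap\cdots\cap(\prod_{i=1}^\ell T_i^{p_{i,m}(n)})B\bigr)>0,
$$
and taking $\vec n_j=\vf_j(n)$ in the correspondence inequality shows that each such $n$ also satisfies $d^*\bigl((E-\vf_1(n))\cap\cdots\cap(E-\vf_m(n))\bigr)>0$. Hence this set of $n$ lies in the intersection of $S$ with the set displayed in the statement and has positive lower density, which is the assertion.

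The only points I would be careful about — rather than genuine obstacles — are that the correspondence principle is stated precisely for $\Z^\ell$-actions by commuting invertible transformations, which matches the class of systems quantified over in the definition of polynomial multiple recurrence, and that a single system $(X,\CX,\mu,T_1,\ldots,T_\ell)$ together with the single set $B$ serves all $n$ simultaneously, so that one application of the recurrence property (not one per $n$) suffices. All the difficulty has been absorbed into Theorem~\ref{T:2}.
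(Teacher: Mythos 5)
Your proposal is correct and follows exactly the route the paper intends: the paper derives Theorem~\ref{T:combinatorics} immediately from the Furstenberg Correspondence Principle combined with the polynomial multiple recurrence conclusions of Corollaries~\ref{C:2} and~\ref{C:3}, which is precisely your argument. The bookkeeping you spell out (coordinates $p_{i,j}$ with $p_{i,j}(0)=0$, one fixed system and set $B$ serving all $n$) is accurate and matches what the paper leaves implicit.
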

\subsection{Pointwise convergence}
Variants of the previous mean convergence results that deal with pointwise convergence of multiple ergodic averages are, for the most part, completely open. The situation is only clear for the single term ergodic averages
\begin{equation}\label{E:pointwise}
\frac{1}{N}\sum_{n=1}^N f(n)\cdot F(T^nx)
\end{equation}
where $F \in L^\infty(\mu)$.
If  $f$ is the M\"obius or the Liouville function, then it is shown in \cite[Proposition~3.1]{AKLR14} that these averages
converge  pointwise   to $0$.  This is done
 by combining  the spectral theorem with some classical  quantitative bounds of H.~Davenport~\cite{D37}
for averages of the form $\frac{1}{N}\sum_{n=1}^N f(n)\, \e(nt)$;  note though that such bounds do not hold for general aperiodic multiplicative functions.

For more general $f\in \CM_{conv}$ we can treat pointwise convergence of the averages \eqref{E:pointwise}  as follows:  If $F$ is orthogonal to
the Kronecker factor of the system, then for every $f\in \CM$ the averages \eqref{E:pointwise} converge pointwise to $0$.
We can  establish this by
combining  an orthogonality criterion of I.~K\'atai~\cite{K86} with a result of J.~Bourgain~\cite{Bou90}; the former implies that the averages
\eqref{E:pointwise}
converge to zero if $
\frac{1}{N}\sum_{n=1}^n  F(T^{an}x)\cdot \overline F(T^{bn}x)\to 0$ for every $a,b\in \N$ with $a\neq b$ and the latter
confirms this property pointwise almost everywhere  when $F$ is orthogonal to the Kronecker factor of the system. On the other hand, suppose that  $F$ is an eigenfunction with eigenvalue $e(\alpha)$ for some $\alpha\in \R$. If $\alpha$ is irrational, then using a result of H.~Daboussi~\cite{D74,DD74} we  deduce that for all $f\in \CM$  the averages \eqref{E:pointwise} converge to $0$ pointwise.  If $\alpha$ is rational, then  they converge for all  $f\in \CM_{conv}$. Furthermore, in either case, the averages \eqref{E:pointwise} converge to $0$  if $f$ is aperiodic. Combining the above and using  an approximation argument, we get that  if $f\in \CM_{conv}$, then the averages \eqref{E:pointwise} converge pointwise, and they converge to $0$ if $f$ is aperiodic.
We deduce from this that   all the sets $S_{\omega, A,b}$,  $S_{\Omega, A,b}$,  $S_{\omega,A, \alpha}$,   $S_{\Omega, A, \alpha}$ defined in Section~\ref{SS:results} are good for pointwise convergence of single ergodic averages and under the obvious non-degeneracy assumptions for the set $A$ we get that for ergodic systems  the normalized averages converge to  $\int F\, d\mu$ for all $F\in L^\infty(\mu)$. Furthermore, an approximation argument allows to extend these results to all $F\in L^1(\mu)$.

We record here a related  open  problem  regarding multiple ergodic averages with arithmetic weights
(perhaps the simplest of this type).
\begin{problem*}
Let $f\in \CM_{conv}$ be a multiplicative function. Is it true that for every measure preserving system $(X,\CX,\mu,T)$, and every $F,G\in L^\infty(\mu)$,
the averages
\begin{equation}\label{E:double}
\frac{1}{N}\sum_{n=1}^N f(n)\cdot F(T^nx)\cdot G(T^{2n}x)
\end{equation}
converge pointwise? Do they converge to $0$ if $f$ is aperiodic?
\end{problem*}
When $f=1$ the averages \eqref{E:double} converge pointwise by a result of J.~Bourgain \cite{Bou90}.
In general, the problem is open even when $f$ is the 
Liouville  function; that is,  it is not known whether  the averages $\frac{1}{N}\sum_{n=1}^N\one_S(n)\cdot  F(T^nx)\cdot G(T^{2n}x)$ converge pointwise when $S$ is the set of integers that have an even number of prime factors counted with multiplicity.

\subsection{Notation and conventions}
 For reader's convenience, we gather here some notation that we use throughout the article.
  We denote by $\N$ the set of positive integers and by $\P$  the set of prime numbers.
 For  $N\in\N$  we let $\Z_N:=\Z/N\Z$ and $[N]:=\{1,\dots,N\}$.   We let  $\e(t):=e^{2\pi it}$.
With $o_{N\to \infty}(1)$ we denote a quantity that converges to $0$ when  $N\to \infty$ and all other implicit parameters are fixed. Given  transformations $T_i\colon X\to X$, $i=1,\ldots, \ell$,  with $\prod_{i=1}^\ell T_i$ we denote the composition $T_1\circ\cdots\circ T_\ell$.
We use the letter $f$ to denote a multiplicative function. A Dirichlet character, denoted by $\chi$, is a completely multiplicative function that is periodic and satisfies $\chi(1)=1$.

\subsection{ Acknowledgement.} We would like to thank M.~Lemanczyk for helpful remarks.

\section{Main ingredients}
\subsection{Multiple recurrence and convergence results}
In order to prove our main results we will use some well  known
multiple recurrence and convergence results in ergodic theory. The first is the   polynomial Szemer\'edi theorem stated in ergodic terms.
\begin{theorem}[Bergelson, Leibman~\cite{BL96}]\label{T:BL}
The set of positive integers is a set of polynomial multiple recurrence.
\end{theorem}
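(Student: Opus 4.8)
The statement to prove is the Bergelson--Leibman polynomial Szemer\'edi theorem, and I would prove it by the ergodic structure-theory method. First a reformulation: since $T_1,\dots,T_\ell$ commute, the group they generate is a quotient of $\Z^\ell$, so the data amount to a single $\Z^\ell$-action $\vec v\mapsto T^{\vec v}:=T_1^{v_1}\cdots T_\ell^{v_\ell}$ together with vector polynomials $\vec p_j:=(p_{1,j},\dots,p_{\ell,j})\colon\Z\to\Z^\ell$, $j=1,\dots,m$, with $\vec p_j(0)=\vec 0$, and I must show that for every $A$ with $\mu(A)>0$ the set $B:=\{n\in\N:\mu(A\cap T^{-\vec p_1(n)}A\cap\dots\cap T^{-\vec p_m(n)}A)>0\}$ has positive lower density. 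The proof rests on two pillars: a van der Corput / PET (``polynomial exhaustion technique'') induction that controls the associated multiple ergodic averages, and the Furstenberg--Zimmer structure theorem.

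First I would carry out the PET induction. To a finite family of vector polynomials one attaches a complexity invariant built from the members' degrees and their ``leading-term'' equivalence classes, and one proves by induction on this invariant that the averages $\frac1N\sum_{n\le N}T^{\vec p_1(n)}F_1\cdots T^{\vec p_m(n)}F_m$ converge in $L^2(\mu)$ and that the limit is unchanged when each $F_j$ is replaced by its conditional expectation onto an appropriate characteristic factor $Z$, which one can take to be the maximal distal factor of the system (or, by Host--Kra and Leibman, an inverse limit of nilsystems). The inductive step is the van der Corput inequality: composing inside the integral with $T^{-\vec p_{j_0}(n)}$ for a well-chosen index $j_0$ and differencing in an auxiliary parameter $h$ replaces $\{\vec p_j\}$ by the family $\{\vec p_j(\cdot+h)-\vec p_{j_0}\}_j\cup\{\vec p_j-\vec p_{j_0}\}_j$, whose complexity is strictly smaller; the base case is a single nonconstant linear polynomial, handled by von Neumann's mean ergodic theorem. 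I would also need the \emph{relative} form of this machinery over an arbitrary factor, proved in the same way.

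Next I would run a Furstenberg-style transfinite induction. Call a system \emph{SZ for $\{\vec p_j\}$} if for every positive-measure $A$ the set of $n$ with $\mu(A\cap\bigcap_jT^{-\vec p_j(n)}A)>0$ is syndetic, hence of positive lower density; a standard ergodic-decomposition argument (run with the property quantified uniformly in $\mu(A)$) reduces the theorem to the ergodic case. By the Furstenberg--Zimmer structure theorem an ergodic system is an inverse limit of a transfinite tower over the trivial system whose successive steps are compact (isometric) or weakly-mixing extensions, so it suffices to check four things: the trivial system is SZ (take $A=X$); SZ is inherited under inverse limits (approximation, using that $\one_A$ is nearly measurable with respect to a sufficiently late stage of the tower); SZ is inherited by weakly-mixing extensions (here the relative van der Corput / PET machinery above forces the relevant conditional averages to agree with those over the base, so recurrence relativizes); and SZ is inherited by compact extensions. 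This last step is the one requiring a genuinely new combinatorial input: given a compact extension $X$ of an SZ system $Y$ and a positive-measure $A\subset X$, one covers the fibres by finitely many small tubes, transfers the problem to a finite coloring of a positive-measure part of the base, and applies the \emph{polynomial van der Waerden theorem} --- in any minimal $\Z^\ell$-system and nonempty open $U$ one has $\bigcap_jT^{-\vec p_j(n)}U\ne\emptyset$ for a syndetic set of $n$ --- to produce a fibrewise-consistent simultaneous near-return along a syndetic set of $n$, whence $\mu(A\cap\bigcap_jT^{-\vec p_j(n)}A)>0$ there. The polynomial van der Waerden theorem I would prove separately by topological dynamics (iterated Birkhoff recurrence / a cube argument) organized by the very same PET-style induction on polynomial families.

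The main obstacle lies in two places. The combinatorial heart is the PET bookkeeping: one must define the complexity of a finite family of vector polynomials so that a single van der Corput step \emph{provably and uniformly} decreases it, which forces a careful choice of which member to subtract off, careful treatment of members whose pairwise differences drop in degree or degenerate to constants, and a genuine well-ordering of the invariant. The second difficulty --- the only truly new phenomenon beyond Furstenberg's linear theorem --- is the compact-extension step for \emph{polynomial} iterates: Furstenberg's linear argument finds the pattern inside a single fibre orbit via the classical van der Waerden theorem, whereas here one needs the polynomial van der Waerden theorem (equivalently polynomial Hales--Jewett), and interlacing its topological-dynamics proof with the structure-theory induction --- while propagating syndeticity through the entire tower so as to obtain the claimed positive lower density of $B$ --- is the crux. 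Alternative routes, such as a purely combinatorial one through a density polynomial Hales--Jewett theorem or the $\mathrm{IP}$-ergodic approach of Bergelson--McCutcheon, run into the analogous difficulty.
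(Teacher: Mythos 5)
The paper contains no proof of this statement: Theorem~\ref{T:BL} is imported verbatim from Bergelson and Leibman \cite{BL96} and used as a black box, so there is no internal argument to compare yours against. Measured against the proof actually given in \cite{BL96}, your route is also not the one they take: for the density (Szemer\'edi) half they do not climb a Furstenberg--Zimmer tower at all, but reduce polynomial multiple recurrence to the IP Szemer\'edi theorem of Furstenberg and Katznelson \cite{FuK85} (the very result quoted as Theorem~\ref{T:IPk} in this paper), using the fact that integer polynomials vanishing at $0$, evaluated along suitable sub-IP-sets, behave like IP-systems; PET induction enters their paper only in the topological, van der Waerden half.

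More importantly, your outline has concrete gaps that would prevent it from closing in the commuting (multidimensional) setting. First, the claim that PET induction yields $L^2(\mu)$ convergence of $\frac1N\sum_{n\le N}T^{\vec p_1(n)}F_1\cdots T^{\vec p_m(n)}F_m$ with the maximal distal factor (or the Host--Kra nilfactors) as a characteristic factor is not something the van der Corput step delivers for several commuting transformations: convergence of these averages was open until Walsh \cite{W12}, whose proof avoids characteristic factors entirely, and the present paper explicitly remarks that for commuting transformations this theory ``is very intricate and not yet developed.'' The Host--Kra/Leibman description you invoke is a single-transformation ($\ell=1$) result. Second, for $\Z^\ell$-actions with $\ell\ge 2$ the dichotomy ``compact extension versus weakly mixing extension'' on which your transfinite induction rests is not available; Furstenberg and Katznelson already had to replace it by primitive extensions (compact along one subgroup, weakly mixing along a complement), and pushing the relative polynomial PET machinery through such extensions is precisely the obstruction that led Bergelson and Leibman to the IP reduction instead. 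So while your sketch correctly names the ingredients of a Furstenberg-style proof in the one-transformation case, as written it does not prove the multidimensional statement that the paper actually uses.
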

The second is a mean convergence result for multiple ergodic averages.
\begin{theorem}[Walsh~\cite{W12}]\label{T:Walsh}
The set of positive integers is a set of polynomial multiple mean convergence.
\end{theorem}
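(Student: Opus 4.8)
The statement to be proved, after unwinding the definition of ``set of polynomial multiple mean convergence'', is: for commuting invertible measure preserving transformations $T_1,\dots,T_\ell$ on a probability space $(X,\CX,\mu)$, functions $F_1,\dots,F_m\in L^\infty(\mu)$, and integer polynomials $p_{i,j}$, the averages $A_N:=\frac1N\sum_{n=1}^N\prod_{j=1}^m\big(\prod_{i=1}^\ell T_i^{p_{i,j}(n)}\big)F_j$ converge in $L^2(\mu)$. Since $L^2(\mu)$ is complete, I would reduce this to showing $\limsup_{N,M\to\infty}\norm{A_N-A_M}_{L^2(\mu)}=0$. It is convenient --- and, as it turns out, essentially forced for the induction below to close --- to argue in the broader setting of a finitely generated nilpotent group $G$ acting on $X$: rewrite each $\prod_i T_i^{p_{i,j}(n)}$ as $S(g_{j,n})$ for a \emph{polynomial sequence} $g_{j,n}$ valued in $G$, so that $A_N=\frac1N\sum_{n=1}^N\prod_j S(g_{j,n})F_j$; the commuting case is $G=\Z^\ell$.

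The plan is to run a \emph{PET induction}, in the style of Bergelson, driven by the van der Corput trick. I would attach to each finite system $\mathcal P=(g_{1,n},\dots,g_{m,n})$ of polynomial sequences a \emph{complexity} $w(\mathcal P)$ valued in a well-ordered set (a finite string of nonnegative integers with an appropriate order), chosen so that the following operation strictly decreases it: expand $\norm{A_N-A_M}^2$ as an inner product, apply the van der Corput inequality in the variable $n$, and translate each term of the resulting two-parameter average by $S(g_{m,n})^{-1}$ so as to trivialise one $m$-th coordinate; this bounds $\limsup_{N,M}\norm{A_N-A_M}^2$ by an average over an auxiliary variable $h$ of $L^2$-limits of systems of polynomial sequences --- generally involving more sequences, built from differences such as $g_{j,n+h}-g_{m,n}$ and $g_{j,n}-g_{m,n}$, with $h$ frozen --- whose $w$-value is nonetheless strictly below $w(\mathcal P)$. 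The base case (all $g_{j,n}$ eventually constant, or $m\le1$) is the von Neumann mean ergodic theorem, and since $w$ lives in a well-ordered set only finitely many differencing steps are ever needed.

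The hard part will be that the van der Corput step does \emph{not} eliminate the auxiliary variable $h$: it survives in an outer average, so to propagate convergence downward in complexity one needs the inner convergence to hold \emph{uniformly} in $h$, which the naive hypothesis ``systems of smaller complexity have convergent averages'' does not supply. Walsh's device --- and the genuinely new point of the argument --- is (a) to strengthen the inductive assertion to a statement about a robust class of \emph{iterated} averages (averages over several parameters, some of them frozen) that is closed under the differencing operation, and (b) to argue by minimal counterexample against the well-ordering: a minimal system violating the strengthened conclusion would, after one differencing step, yield a strictly lower-complexity system that also violates it, a contradiction. In effect, well-foundedness of $w$ converts the apparent infinite regress of ``uniformity in $h$'' demands into a finite descent. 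I expect that pinning down the right strengthened statement together with a complexity order for which the van der Corput step provably decreases $w$ while the class of iterated averages stays closed is the only non-routine ingredient; the van der Corput estimate itself, the bookkeeping that $w$ drops, and the base case are all standard.

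Finally, I note that in the commuting case one could instead invoke Leibman's polynomial convergence theorem for commuting transformations, proved through the Host--Kra / Leibman theory of characteristic nilfactors; but that route rests on exactly the deep structure theory the present paper prefers to bypass, whereas the PET-plus-van-der-Corput argument uses only elementary tools and delivers the nilpotent generalisation as a byproduct.
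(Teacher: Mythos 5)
The paper does not prove this statement: it is imported wholesale as Theorem~\ref{T:Walsh} with a citation to \cite{W12}, so there is no internal proof to compare yours against. At the level of architecture your sketch is a faithful description of how Walsh's argument actually runs: a PET-style complexity attached to systems of polynomial sequences in a nilpotent group, van der Corput differencing, the observation that the auxiliary parameter $h$ survives the differencing step in an outer average, and the resolution by strengthening the inductive statement to a class of iterated averages closed under differencing, with descent along a well-ordering.

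As a proof, however, the proposal has a genuine gap, and you have located it yourself: everything you label ``the only non-routine ingredient'' is precisely the content of the theorem. Concretely, three things are missing and none is routine. First, the strengthened inductive statement cannot be the naive infinitary assertion ``averages of lower-complexity systems converge,'' because, as you note, mere convergence gives no uniformity in the frozen parameter $h$; Walsh's fix is to pass to a finitary, \emph{metastable} formulation --- uniform bounds, over a whole class of reducible systems, on the number of $\varepsilon$-fluctuations of the iterated averages --- and only this quantitative uniformity can be propagated through the outer average over $h$. Announcing that one will strengthen the induction does not by itself produce a statement that both implies norm convergence and survives the differencing step; finding it is the theorem. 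Second, one must actually define the complexity $w(\mathcal P)$ and its well-ordering and verify that the systems produced by van der Corput plus right-translation by $S(g_{m,n})^{-1}$ (built from $g_{j,n+h}g_{m,n}^{-1}$ and $g_{j,n}g_{m,n}^{-1}$) are again polynomial sequences of strictly smaller $w$-value; in the nilpotent setting this requires a calculus of polynomial sequences that you assume but do not supply. Third, the base case for $m=1$ with a nonlinear polynomial $p_{1,1}$ is not the von Neumann theorem; it already needs the spectral theorem together with Weyl equidistribution, or a further descent. None of this means the strategy is wrong --- it is the correct strategy --- but the proposal is a roadmap to Walsh's paper rather than a proof, and the distance from the roadmap to the destination is essentially the whole of \cite{W12}.
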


\subsection{Gowers norms and estimates}\label{SS:Gowers}
We recall the definition of the    $U^s$-Gowers uniformity
norms from \cite{G01}.
\begin{definition}[Gowers norms on $\Z_N$~\cite{G01}]
Let $N\in \N$  and $a\colon \Z_N\to \C$. For $s\in \N$ the \emph{Gowers $U^s(\Z_N)$-norm} $\norm a_{U^s(\Z_N)}$ of $a$ is defined inductively as follows:   For every $t\in\Z_N$ we write $a_t(n):=a(n+t)$. We let
$$
\norm a_{U^1(\Z_N)}:=\Big|\frac{1}{N}\sum_{n\in \Z_N} a(n)\Big|
$$
and for every $s\in \N$ we let
$$
\norm a_{U^{s+1}(\Z_N)}:=\Bigl(\frac{1}{N}\sum_{t\in \Z_N}\norm{a\cdot \overline a_t}_{U^s(\Z_N)}^{2^s}\Bigr)^{1/2^{s+1}}.
$$
If $a\colon \N\to \C$ is an infinite sequence, then by $\norm a_{U^{s}(\Z_N)}$ we denote the $U^{s}(\Z_N)$-norm of the restriction of  $a$ to the interval $[N]$, thought of as a function on $\Z_N$.
\end{definition}

The following uniformity estimates will be used to analyze the limiting behavior of  multiple ergodic averages.
\begin{lemma}[Uniformity estimates~{\bf \cite[Lemma~3.5]{FHK13}}]\label{L:VDC}
Let $\ell,m\in \N$,  $(X,\CX,\mu)$ be a probability space,
$T_1,\dots, T_\ell\colon X\to X$ be invertible commuting measure
preserving transformations,
$F_1,\ldots, F_m \in L^\infty(\mu)$ be
functions bounded by $1$, and $p_{i,j}\colon \Z\to \Z$,
$i\in\{1,\ldots,\ell\}$, $j\in\{1,\ldots,m\}$,  be polynomials. Let
$w\colon \N\to \C$  be a sequence of complex numbers that is bounded by $1$.
Then there exists $s\in \N$,
depending only on the maximum degree of the polynomials $p_{i,j}$
and the integers $\ell$ and $m$, such that
$$
\norm{\frac{1}{N}\sum_{n=1}^N w(n)\cdot
(\prod_{i=1}^\ell T_i^{p_{i,1}(n)}) F_1
\cdot \ldots \cdot (\prod_{i=1}^\ell T_i^{p_{i,m}(n)})F_m}_{L^2(\mu)}\ll
\norm{{\bf 1}_{[N]}\cdot  w}_{U^s(\Z_{sN})} +o_{N}(1).
$$
Furthermore,  the implicit constant
 and the $o_{N}(1)$ term  depend only on the
integer $s$.
\end{lemma}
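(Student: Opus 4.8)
The plan is to run the polynomial exhaustion technique (PET) induction of Bergelson, carrying the weight $w$ along, until the transformations are eliminated, and then to recognise the surviving multilinear average of $w$ as a power of a Gowers norm. Normalising, we may assume $\norm{F_j}_{L^\infty(\mu)}\le1$ for all $j$ and $|w(n)|\le1$; write
$$
a_n:=w(n)\cdot\big(\prod_{i=1}^\ell T_i^{p_{i,1}(n)}\big)F_1\cdots\big(\prod_{i=1}^\ell T_i^{p_{i,m}(n)}\big)F_m ,
$$
so the quantity to be bounded is $\norm{\frac1N\sum_{n=1}^N a_n}_{L^2(\mu)}$, and put $d:=\max_{i,j}\deg p_{i,j}$.

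First I would attach to the family $(p_{i,j})$ a complexity invariant in the style of Bergelson --- a ``weight vector'' recording degrees and the number of essentially distinct vector polynomials $(p_{i,j})_{i=1}^\ell$ --- taking values in a well-ordered set. In the base case every $p_{i,j}$ is constant (the transformations do not depend on $n$): then $\frac1N\sum_n a_n=\big(\frac1N\sum_{n=1}^N w(n)\big)\cdot G$ with $G\in L^\infty(\mu)$, $\norm G_{L^\infty(\mu)}\le1$, and $\big|\frac1N\sum_{n=1}^N w(n)\big|=\norm{\one_{[N]}w}_{U^1(\Z_N)}\ll\norm{\one_{[N]}w}_{U^s(\Z_{sN})}$ for every $s\ge1$, by monotonicity of the Gowers norms together with an elementary comparison of $U^1$-norms over $\Z_N$ and $\Z_{sN}$. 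For the inductive step, apply the finitary van der Corput inequality: for $1\le H\le N$,
$$
\Big\|\frac1N\sum_{n=1}^N a_n\Big\|_{L^2(\mu)}^2\ll\frac1H\sum_{h=1}^{H}\Big|\frac1N\sum_{n=1}^N\langle a_{n+h},a_n\rangle\Big|+\frac HN+\frac1H .
$$
Expanding $\langle a_{n+h},a_n\rangle$, composing inside the integral with the measure preserving transformation $\prod_{i=1}^\ell T_i^{-p_{i,j_0}(n)}$ for a minimal index $j_0$, and collecting, one obtains $w(n+h)\overline{w(n)}$ times an integral of a product of at most $2m$ functions whose transformation-exponents are the polynomials $p_{i,j}(n+h)-p_{i,j_0}(n)$ and $p_{i,j}(n)-p_{i,j_0}(n)$; these all have degree $\le d$, some of them drop in degree, and the usual verification shows that the new family is of strictly smaller complexity.

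Iterating, after a number $r$ of van der Corput steps bounded solely in terms of $d$, $\ell$, $m$ (this is the termination of PET, and is where $s$ comes from), the family becomes trivial, and then, up to a multiplicative constant and an $o_N(1)$ error, the left-hand side raised to the power $2^r$ is bounded by
$$
\frac1{H^{r}}\sum_{h_1,\dots,h_r=1}^{H}\big|c(h_1,\dots,h_r)\big|\cdot\Big|\frac1N\sum_{n=1}^N\ \prod_{\epsilon\in\{0,1\}^{r}}\mathcal C^{|\epsilon|}w\big(n+\epsilon_1 h_1+\cdots+\epsilon_r h_r\big)\Big| ,
$$
where $\mathcal C$ denotes complex conjugation (applied when $|\epsilon|$ is odd) and $|c(\cdot)|\le1$ is the now $n$-free integral of the $F_j$'s. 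Bounding $|c|$ by $1$, passing to $\Z_{sN}$ with $s=r+1$ so that the forms $n+\epsilon_1h_1+\cdots+\epsilon_rh_r$ with $n,h_i\in[N]$ neither wrap around nor collide, and applying Cauchy--Schwarz in $h_1,\dots,h_r$ to upgrade this $\ell^1$ average to the $\ell^2$ average that by the inductive definition equals $\norm{\one_{[N]}w}_{U^{r+1}(\Z_{sN})}^{2^{r+1}}$, we arrive at $\norm{\frac1N\sum_n a_n}_{L^2(\mu)}\ll\norm{\one_{[N]}w}_{U^s(\Z_{sN})}+o_N(1)$, with $s$, the implicit constant, and the $o_N(1)$ depending only on $r$; taking $H=H_N\to\infty$ with $H_N/N\to0$ absorbs the $\frac HN+\frac1H$ errors from the $r$ van der Corput steps.

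The main obstacle is the bookkeeping of the PET induction: choosing a complexity invariant that is genuinely well-founded, checking that the van der Corput/normalisation operation strictly decreases it while keeping all degrees $\le d$, and --- crucially --- extracting from this a bound on the number of steps $r$, hence on $s$, that is uniform over all polynomial families of degree $\le d$, over all systems, and over all choices of $w$ and of the $F_j$. A secondary, purely clerical but unavoidable, difficulty is the final matching: tracking exactly which arguments and conjugations of $w$ survive so that the terminal average is genuinely of Gowers/box type, choosing the modulus $sN$ large enough to exclude wraparound, and estimating the accumulated van der Corput errors so that the final bound really is of the form $(\text{Gowers norm})+o_N(1)$ with constants depending only on $s$.
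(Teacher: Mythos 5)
First, a point of reference: the paper itself contains no proof of this lemma --- it is imported verbatim from \cite[Lemma~3.5]{FHK13} --- so the comparison is with the argument in that reference. Your strategy (PET induction carrying the weight $w$ along, a well-founded complexity invariant to bound the number $r$ of van der Corput steps in terms of $d,\ell,m$ only, passage to $\Z_{sN}$ to avoid wraparound, and a final Cauchy--Schwarz recognising the surviving multilinear correlation of $w$ as a power of a Gowers norm) is exactly the strategy of the cited proof, and most of the sketch is sound.

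There is, however, one genuine gap: the choice of the van der Corput parameter $H=H_N$ with $H_N/N\to 0$. After $r$ such steps the surviving quantity is an average over $(h_1,\dots,h_r)\in[H]^r$ of \emph{local} correlations of $w$, whereas the norm $\norm{\one_{[N]}w}_{U^{r+1}(\Z_{sN})}$ averages its cube variables over ranges of length comparable to $N$. These two quantities are not comparable in the direction you need: take $w$ equal to an independent random sign on each block of length $L$ with $H\ll L\ll N/\log N$; then for $h\le H$ one has $\frac1N\sum_n w(n+h)\overline{w(n)}=1-O(h/L)$, so your local average is $1-o(1)$, while $\norm{\one_{[N]}w}_{U^2(\Z_{2N})}=o(1)$ almost surely. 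Hence the last inequality of your sketch (local $\ell^1$ average in $h$ over $[H]^r$ $\ll$ global Gowers norm) is false in general, and the conclusion does not follow. The fix is standard and is what the cited proof does: use the exact Cauchy--Schwarz form of van der Corput, $\norm{\frac1N\sum_{n=1}^N a_n}^2\le \frac{2}{N}\sum_{|h|< N}\bigl|\frac1N\sum_n\langle a_{n+h},a_n\rangle\bigr|$, i.e.\ run every step with the full range $|h|<N$ (there is then no $H/N+1/H$ error to absorb, only boundary terms of size $o_N(1)$), so that at the end every cube variable ranges over $(-N,N)$ and the terminal expression genuinely dominates by $\norm{\one_{[N]}w}_{U^{s}(\Z_{sN})}^{2^{s}}$ up to constants depending only on $s$. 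With that modification your sketch matches the cited argument.
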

We also need the following result which follows from Lemma~A.1 and A.2 in \cite{FH14}.
\begin{lemma}
\label{L:compare}
Let  $s\geq 2$ be an integer and  $\ve, \kappa>0$. Then there exist $\delta>0$ and $N_0\in \N$,
such that for all integers $N, \tN$ with $ N_0\leq N\leq \tN\leq \kappa N$,  every  interval $J\subset [N]$, and $f\colon \Z_\tN\to \C$ with $|f|\leq 1$, the following implication holds:
$$
\text{if }\ \norm f_{U^s(\Z_\tN)} \leq\delta, \ \text{ then }\ \norm{\one_{J}\cdot f}_{U^s(\Z_N)}\leq \ve.
$$
\end{lemma}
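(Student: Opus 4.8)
The statement concerns only Gowers norms on cyclic groups, so the plan is to deduce it from the two appendix lemmas of \cite{FH14} combined with the constraint $\tN\le\kappa N$, which is precisely what keeps all constants independent of $N$.

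First I would reduce to an \emph{effective} comparison: it suffices to produce, for each $s\ge 2$ and each $\kappa>0$, an increasing function $\Psi=\Psi_{s,\kappa}\colon[0,2]\to[0,\infty)$ with $\Psi(t)\to 0$ as $t\to 0^+$, such that
$$
\norm{\one_J\cdot f}_{U^s(\Z_N)}\le \Psi\bigl(\norm f_{U^s(\Z_\tN)}\bigr)+o_{N\to\infty}(1)
$$
uniformly over all $N\le\tN\le\kappa N$, all intervals $J\subseteq[N]$, and all $f\colon\Z_\tN\to\C$ with $|f|\le 1$ (note $J\subseteq[N]\subseteq[\tN]$, so $\one_J\cdot f$ makes sense as a function on $\Z_N$). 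Granting such a bound one chooses $N_0$ so that the $o_{N\to\infty}(1)$ term is $\le\ve/2$ for $N\ge N_0$, and then $\delta>0$ with $\Psi(\delta)\le\ve/2$; this yields the lemma.

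To obtain the effective bound I would pass from $\Z_\tN$ to $\Z_N$ in two moves. (1) \emph{Restriction to $J$:} using Lemma~A.2 of \cite{FH14} (which controls the $U^s$-norm of $f$ multiplied by an indicator of an interval), bound $\norm{\one_J f}_{U^s(\Z_\tN)}$ in terms of $\norm f_{U^s(\Z_\tN)}$ with a power-type loss $t\mapsto t^{c(s)}$; it is here that $s\ge2$ is used (for $s=2$ this is elementary Fourier analysis, and for larger $s$ one descends through the Cauchy--Schwarz step in the inductive definition of the Gowers norms). (2) \emph{Change of modulus:} compare $\norm{\one_J f}_{U^s(\Z_\tN)}$ with $\norm{\one_J f}_{U^s(\Z_N)}$ via Lemma~A.1 of \cite{FH14}; when $|J|$ is a small fraction of $N$ the two cyclic Gowers boxes do not wrap around and the comparison is exact up to the factor $(\tN/N)^{(s+1)/2^s}\le\kappa^{(s+1)/2^s}$, while in general one first covers $J$ by $O_{s,\kappa}(1)$ short subintervals, applies the previous remark to each, and sums using the triangle inequality for $\norm{\cdot}_{U^s}$, picking up an $o_{N\to\infty}(1)$ from the mismatch of the two groups. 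Concatenating (1) and (2) produces $\Psi_{s,\kappa}$ with the required properties.

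The main obstacle is the wraparound: since $\one_J f$ may occupy a constant fraction of, or even all of, $\Z_N$, the Gowers box on $\Z_N$ genuinely wraps around, so one cannot replace $\norm{\cdot}_{U^s(\Z_N)}$ by a ``no wraparound'' quantity and the comparison in step (2) cannot be loss-free; this is why one must work with the quantitative dependence $\Psi_{s,\kappa}$ rather than with a clean inequality. The other point needing care is uniformity: $\delta$ and $N_0$ must not depend on the particular interval $J$ nor on the particular value of $\tN$ in $[N,\kappa N]$, which is exactly why steps (1)--(2) are arranged so that every constant depends only on $s$ and $\kappa$, and why the hypothesis $\tN\le\kappa N$ appears in the statement.
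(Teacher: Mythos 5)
Your argument is correct and matches the paper's intended derivation: the paper gives no details beyond asserting that the lemma ``follows from Lemma~A.1 and A.2 in \cite{FH14}'', and your two-step combination --- interval restriction inside $\Z_\tN$ (where $s\ge 2$ supplies the needed modulation invariance), followed by a change of modulus performed on $O_s(1)$ short subintervals of $J$ to avoid wraparound, with constants controlled by $\kappa$ --- is exactly how those two appendix lemmas are meant to be assembled. Extracting $\delta$ and $N_0$ from the resulting uniform bound $\Psi_{s,\kappa}$ is the right way to conclude.
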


\subsection{Gowers uniform sets} We introduce here the notion of a Gowers uniform subset of the integers that was used repeatedly in the statements of our main results.
 \begin{definition} We say that  a set of positive integers $S$ is \emph{Gowers uniform} if there exists a positive constant $c$ such that
$$
\lim_{N\to \infty}\norm{\one_{S}-c}_{U^s(\Z_N)}= 0
$$ for every $s\in \N$.
\end{definition}
\begin{remark}
If such a constant exists, then  applying  the defining property for $s=1$ gives that $c$ is the density of the set $S$.
\end{remark}

If $S$ is a Gowers uniform set, then applying Lemma~\ref{L:VDC} for the weight  $w(n)=\one_{S}(n)-c$, $n\in \N$,
and combining the definition of Gowers uniformity with  Lemma~\ref{L:compare}, we deduce that
for
$$
V_n:=(\prod_{i=1}^\ell T_i^{p_{i,1}(n)}) F_1
\cdot \ldots \cdot (\prod_{i=1}^\ell T_i^{p_{i,m}(n)})F_m,
$$
we have
$$
\frac{1}{|S\cap [N]|}\sum_{n\in S\cap [N]} V_n-\frac{1}{N}\sum_{n=1}^N V_n\to^{L^2(\mu)}0.
$$
Using this, the recurrence result of Theorem~\ref{T:BL}, and the convergence result of Theorem~\ref{T:Walsh},
we deduce the following:
\begin{proposition}\label{P:Gowers}
Suppose that the set $S\subset \N$ is Gowers uniform. Then  any shift of  $S$ is a set of polynomial multiple recurrence and  polynomial multiple mean
convergence.
\end{proposition}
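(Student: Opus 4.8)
The statement to prove is Proposition~\ref{P:Gowers}: if $S\subset\N$ is Gowers uniform, then any shift of $S$ is a set of polynomial multiple recurrence and polynomial multiple mean convergence.

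The plan is to reduce the claim about $S$ (and its shifts) to the unweighted results Theorem~\ref{T:BL} (polynomial Szemer\'edi) and Theorem~\ref{T:Walsh} (Walsh's convergence theorem) by showing that replacing the average $\frac1N\sum_{n=1}^N V_n$ by the average over $S\cap[N]$ introduces an error that vanishes in $L^2(\mu)$. Indeed, the paragraph just before the proposition already carries this out for $S$ itself: writing $c$ for the density of $S$, one applies Lemma~\ref{L:VDC} with the weight $w(n)=\one_S(n)-c$, whose $U^s(\Z_{sN})$-norm tends to $0$ by the definition of Gowers uniformity together with Lemma~\ref{L:compare} (to pass between the norms on $\Z_N$ and $\Z_{\tilde N}$ with $\tilde N=sN$), to conclude that
$$
\frac{1}{|S\cap[N]|}\sum_{n\in S\cap[N]} V_n-\frac1N\sum_{n=1}^N V_n \;\to^{L^2(\mu)}\; 0.
$$

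First I would observe that a shift $S+h$ of a Gowers uniform set is again Gowers uniform. This is immediate from the shift-invariance of the Gowers norms on $\Z_N$: $\one_{S+h}$ restricted to $[N]$ differs from a cyclic shift of $\one_S$ restricted to $[N]$ only near the endpoints of the interval, an $O(h)$-sized discrepancy which is negligible as $N\to\infty$ (one can make this precise by comparing with the function $\one_S(\cdot-h)$ on $\Z_N$ and using that $\norm{\one_S(\cdot-h)-c}_{U^s(\Z_N)}=\norm{\one_S-c}_{U^s(\Z_N)}$, plus Lemma~\ref{L:compare} applied to an interval to absorb the endpoint correction). Hence it suffices to prove the two conclusions for $S$ itself, the shifted case following by applying the result to $S+h$.

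For polynomial multiple mean convergence: with $V_n$ as above, Theorem~\ref{T:Walsh} gives that $\frac1N\sum_{n=1}^N V_n$ converges in $L^2(\mu)$, and the displayed limit identity then forces $\frac{1}{|S\cap[N]|}\sum_{n\in S\cap[N]} V_n$ to converge to the same limit (note $|S\cap[N]|/N\to c>0$, so the normalization is harmless); equivalently $\frac1N\sum_{n=1}^N\one_S(n)V_n$ converges, which is the definition of $S$ being a set of polynomial multiple mean convergence. For polynomial multiple recurrence: given $A$ with $\mu(A)>0$ and polynomials $p_{i,j}$ with $p_{i,j}(0)=0$, take $F_1=\cdots=F_m=\one_A$, integrate the above $L^2$-convergence over $X$, and compare with Theorem~\ref{T:BL}. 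Theorem~\ref{T:BL} guarantees a positive lower density set of $n$ with $\int V_n\,d\mu>0$; more quantitatively it gives $\liminf_{N\to\infty}\frac1N\sum_{n=1}^N\int V_n\,d\mu>0$ (this is the standard averaged form of multiple recurrence extracted from the proof, or one can deduce it from the fact that the $L^2$-limit of $\frac1N\sum V_n$ has positive integral since otherwise the set of good $n$ would have density zero). Combined with the error estimate this yields $\liminf_{N\to\infty}\frac1N\sum_{n\in S\cap[N]}\int V_n\,d\mu>0$, hence $\int V_n\,d\mu>0$ for a set of $n\in S$ of positive lower density, which is the definition of $S$ being a set of polynomial multiple recurrence.

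The only real subtlety — and the step I would flag as the main obstacle to a fully rigorous write-up — is bookkeeping around the two different moduli in the Gowers norms: Lemma~\ref{L:VDC} produces a $U^s(\Z_{sN})$-norm, while Gowers uniformity of $S$ is phrased via $U^s(\Z_N)$-norms, and the weight $\one_{[N]}\cdot w$ is the restriction of $w=\one_S-c$ to an interval sitting inside $\Z_{sN}$. Lemma~\ref{L:compare} is exactly designed to bridge this gap: with $\tilde N=sN$, $\kappa=s$, and $J=[N]$, smallness of $\norm{\one_S-c}_{U^s(\Z_{sN})}$ (which follows from Gowers uniformity applied with $\tilde N$ in place of $N$) implies smallness of $\norm{\one_{[N]}\cdot(\one_S-c)}_{U^s(\Z_N)}$. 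One should also note that for $s=1$ the passage is elementary and handled directly, so there is no issue with the hypothesis $s\geq 2$ in Lemma~\ref{L:compare}: one may assume the $s$ produced by Lemma~\ref{L:VDC} is at least $2$ by monotonicity of the Gowers norms. Beyond this the argument is a direct assembly of the quoted results.
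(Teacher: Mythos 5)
Your proposal is correct and follows essentially the same route as the paper: the paper's proof is precisely the paragraph preceding the proposition, which applies Lemma~\ref{L:VDC} to the weight $w=\one_S-c$, uses Gowers uniformity together with Lemma~\ref{L:compare} to control $\norm{\one_{[N]}\cdot w}_{U^s(\Z_{sN})}$, and then invokes Theorems~\ref{T:Walsh} and~\ref{T:BL}. Your additional observations (that shifts of Gowers uniform sets are Gowers uniform, and that the recurrence step needs the averaged form of the Bergelson--Leibman theorem) simply make explicit what the paper leaves implicit.
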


\subsection{Structure theorem  for multiplicative functions and aperiodicity}\label{SS:Structure}
Next we state a  structural result from \cite{FH14} that is crucial for our study.
We first  introduce  some notation from \cite[Section~3]{FH14}.
Given $f\colon \N\to \C$ and $N\in \N$ we  let
 $$
f_N:=f\cdot \one_{[N]}
 $$
 and whenever appropriate  we consider $f_N$ as a function in $\Z_N$.
  The Fourier transform $\widehat{f_N}$ of $f_N$ is defined by
$$
\widehat{f_N}(\xi):=\frac 1N\sum_{n=1}^N f(n)\,  \e\bigl(-n\frac\xi N\bigr) \quad \text{for }\xi\in\Z_N.
$$
By a \emph{kernel} on $\Z_N$ we mean a non-negative function on $\Z_{N}$ with average  $1$.
For every prime number $N$ and $\theta>0$,  in \cite{FH14} we  defined two positive integers $Q=Q(\theta)$ and $V=V(\theta)$, and for  $N>2QV$, a function     $\phi_{N,\theta}\colon \Z_N\to \C$ given by the formula
$$
\phi_{N,\theta}:=\sum_{\xi \in \Xi_{N,\theta}}
\big(1-\Bigl\Vert \frac{Q\xi}N\Bigr\Vert\, \frac
N{ QV}\big) \, \e\big(n\frac{\xi}{N}\big)
$$
where
\begin{equation}
\label{eq:def-Xi}
\Xi_{N,\theta}:=\Big\{\xi\in \Z_N\colon
\Bigl\Vert\frac{Q \xi}N\Bigr\Vert<
\frac{QV}N\Big\}.
\end{equation}

Then for every $\xi\in \Z_N$
  we have
\begin{equation}
\label{eq:fourier-phi} \widehat{\phi_{N,\theta}}(\xi) =\begin{cases}
\displaystyle 1-\Bigl\Vert \frac{Q\xi}N\Bigr\Vert\, \frac
N{ QV}
&\ \  \text{if }\  \xi\in\Xi_{N,\theta} \  ;\\
0 & \ \ \text{otherwise.}
\end{cases}
\end{equation}

\begin{theorem}[Structure theorem for multiplicative functions~{\bf \cite[Theorem~8.1]{FH14}}]
\label{T:Structure} Let $s\in \N$  and  $\ve>0$. Then there exist a  real number  $\theta>0$ and $N_0\in \N$, depending on $s$ and $\ve$ only,    such that
 for every  prime $N\geq N_0$,    every $f\in\CM$ admits
the decomposition
$$
 f(n)=f_{N,\st}(n)+f_{N,\un}(n), \quad \text{ for every }\  n\in [N],
$$
 where  $f_{N,\st},f_{N,\un}\colon [N]\to \C$ are bounded by $1$ and $2$ respectively  and   satisfy:
\begin{enumerate}
\item
\label{it:weakUs-1}  $f_{N,\st}=f_N*\phi_{N,\theta}$ where
$\phi_{N,\theta}$ is the  kernel on $\Z_N$ defined  previously and
  the convolution product is defined  in $\Z_N$;
\item
\label{it:weakUs-3}
 $\norm{f_{N,\un}}_{U^s(\Z_N)}\leq\ve$.
\end{enumerate}
\end{theorem}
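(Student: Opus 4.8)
This is the principal structural theorem of \cite{FH14}, so any proof must bring in higher order Fourier analysis; here is the route I would take. Write $f_{N,\st}:=f_N*\phi_{N,\theta}$ and $f_{N,\un}:=f_N-f_{N,\st}$. Since $\phi_{N,\theta}$ is a non-negative kernel — from \eqref{eq:fourier-phi} one sees it is a dilated Fej\'er kernel supported on the multiples of $Q$ — convolution with it is a positive averaging operator, so $|f_{N,\st}|\le 1$, hence $|f_{N,\un}|\le 2$, and the shape of the decomposition and its bounds are automatic (concretely, $f_{N,\st}$ is a Fej\'er-smoothed restriction of $f$ to residue classes modulo $Q$). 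The entire content is the estimate $\norm{f_{N,\un}}_{U^s(\Z_N)}\le\ve$, with $\theta$, and hence $Q(\theta),V(\theta),N_0$, depending on $s,\ve$ only and \emph{not} on $f$. In Fourier terms $\wh{f_{N,\un}}(\xi)=\wh{f_N}(\xi)\bigl(1-\wh{\phi_{N,\theta}}(\xi)\bigr)$ damps to first order the frequencies $\xi/N$ within $V/N$ of a rational $k/Q$ (the ``major arcs'' $\Xi_{N,\theta}$) and leaves all other frequencies untouched, so the theorem asserts that once the part of $f_N$ carried by frequencies near rationals of bounded denominator is removed, $f_N$ becomes Gowers $U^s$-uniform.

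I would argue by contradiction: suppose that for fixed $s,\ve$ there are arbitrarily large primes $N$ and functions $f\in\CM$ with $\norm{f_{N,\un}}_{U^s(\Z_N)}>\ve$, and derive a contradiction once $\theta$ is small enough (so $Q,V$ large). The case $s=2$ is essentially classical: by Parseval $\sum_\xi|\wh{f_N}(\xi)|^2\le1$, so $\norm{f_{N,\un}}_{U^2(\Z_N)}^4$ is controlled by the pointwise quantity $\sup_\xi|\wh{f_N}(\xi)|\,|1-\wh{\phi_{N,\theta}}(\xi)|$, and the Hal\'asz--Montgomery--Vaughan mean value theorem in the Granville--Soundararajan pretentious form shows $\wh{f_N}(\xi)$ is small unless $\xi/N$ is near a rational of small denominator and $f$ ``pretends'' to be $n^{it}\chi(n)$ for some bounded $|t|$ and $\chi$ of bounded modulus; for such $f$ the Fourier mass that $n^{it}\chi$ deposits near the corresponding arc is exactly what the first-order vanishing of $1-\wh{\phi_{N,\theta}}$ at the arc centre, together with the window width $V$, is tuned to absorb. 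Taking $Q$ and $V$ large, uniformly in $t$, settles $s=2$ and isolates the pretentious data $(t,\chi)$ as the \emph{only} obstruction to uniformity — precisely what $\phi_{N,\theta}$ retains.

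For $s\ge3$ the real content is that, for a multiplicative function, $U^2$-type control of the minor-arc part propagates to all $U^s$-norms. I would feed $f_{N,\un}$ into the Green--Tao--Ziegler inverse theorem for $U^s$, producing an $(s-1)$-step nilsequence $\psi(n)=F(g(n)\Gamma)$ of complexity $O_{s,\ve}(1)$ with $\bigl|\frac1N\sum_{n\le N}f_{N,\un}(n)\overline{\psi(n)}\bigr|\gg_{s,\ve}1$; since deleting the major-arc part of $f_N$ can only destroy correlation with major-arc frequencies, $f_N$ itself must correlate substantially with $\psi$ or with a closely related bounded nilsequence. Then I would invoke the K\'atai orthogonality device in quantitative form: multiplicativity ($f(pn)=f(p)f(n)$ for $p\nmid n$) and Cauchy--Schwarz over primes $p\le Q'$ bound $\bigl|\frac1N\sum_n f(n)\overline{\psi(n)}\bigr|^2$ by $O(1/Q')$ plus the average over distinct pairs $p,p'\le Q'$ of $\bigl|\frac1N\sum_n\psi(pn)\overline{\psi(p'n)}\bigr|$; each inner average is the mean of a polynomial nilsequence on $G^2/\Gamma^2$, and Leibman's equidistribution theorem (in Green--Tao's quantitative form) makes it negligible except for a sparse set of pairs, those exceptions forcing a horizontal-character obstruction — a genuine linear exponential $\e(n\beta)$, of controlled height, with which $\psi$ and hence $f$ correlates. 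That drops us back to the pretentious/$U^2$ regime of the previous step, and a careful ordering of parameters (the complexity $M$ from $s,\ve$; then $Q'$; then $Q,V,\theta$) closes the contradiction.

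The hard part is the nilsequence input: one needs quantitative equidistribution of the dilated polynomial orbits $n\mapsto(g(pn),g(p'n))$ on product nilmanifolds, uniform over the pairs of small primes and over the bounded-but-unspecified complexity of $\psi$, and a clean matching of the surviving horizontal resonances with the arcs $\phi_{N,\theta}$ keeps. A further, pervasive difficulty is that every parameter must work uniformly over the whole class $\CM$, which forces one to track the Archimedean variable $t$ throughout the pretentious analysis and to interface the additive Fourier estimates with the nilsequence ones. Since the inverse theorem and the equidistribution theory are ineffective, $\theta$ and $N_0$ emerge non-explicit — harmless here, as only their existence is claimed.
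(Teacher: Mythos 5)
This theorem is not proved in the paper at all: it is imported verbatim from \cite{FH14} (the authors' remark only notes that the cut-off present in the original statement can be dropped by the same argument), so there is no in-paper proof to compare your attempt against. Measured against the actual proof in \cite{FH14}, your outline does identify the right architecture: the $L^\infty$ bounds on $f_{N,\st}$ and $f_{N,\un}$ are indeed immediate from $\phi_{N,\theta}$ being a non-negative mean-one kernel; the $U^2$ case is handled by Hal\'asz--Montgomery-type estimates in the pretentious framework (cf.\ \cite{BGS12}), which confine the large Fourier coefficients of $\wh{f_N}$ to rationals of denominator $O_\ve(1)$ shifted by a bounded amount coming from the $n^{it}$ pretender, exactly what the width $V$ and the vanishing of $1-\wh{\phi_{N,\theta}}$ on $\Xi_{N,\theta}$ are designed to absorb; and the passage to $U^s$, $s\geq 3$, does go through the inverse theorem, the K\'atai orthogonality criterion over pairs of small primes, and the Green--Tao quantitative equidistribution theory on nilmanifolds. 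Your emphasis on uniformity over all of $\CM$ and on tracking the Archimedean parameter $t$ is also correctly placed.

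That said, what you have written is a plan rather than a proof, and the places where you say ``I would invoke'' are precisely where the bulk of \cite{FH14} lives. Two bridging claims in particular are asserted without justification and are not automatic: (a) from $\norm{f_{N,\un}}_{U^s(\Z_N)}>\ve$ and the inverse theorem you get correlation of $f_{N,\un}=f_N-f_N*\phi_{N,\theta}$ with a nilsequence $\psi$, but transferring this to correlation of $f_N$ itself with a bounded-complexity nilsequence requires handling the term $f_N*\phi_{N,\theta}$ (a convolution of $f_N$ with a kernel correlating with $\psi$ is not formally of the same shape, and one must either run K\'atai on the convolved object or commute the convolution onto $\psi$ and control the complexity of the result); and (b) the step where the exceptional pairs $(p,p')$ in the K\'atai/Cauchy--Schwarz argument are converted into a single linear exponential $\e(n\beta)$ of controlled height correlating with $f$ is the technical heart of the factorization/equidistribution analysis and cannot be dispatched in one sentence --- one needs the Green--Tao factorization theorem for polynomial orbits, uniformity over the pairs of primes, and a careful matching of the resulting horizontal frequencies with the arcs retained by $\phi_{N,\theta}$. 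Finally, the proof-by-contradiction framing must ultimately be replaced by a quantitative chain of parameter choices (complexity $M$, then $Q'$, then $Q,V,\theta$, then $N_0$), since several of the inputs are ineffective and a naive compactness argument does not by itself produce a single $\theta$ working for all $f\in\CM$ and all large primes $N$ simultaneously. So: right strategy, consistent with the cited source, but with genuine gaps at the steps that carry the real difficulty.
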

\begin{remark}  In \cite{FH14} this result is stated with
$f$  multiplied by a certain cut-off. The cut-off is not needed for our purposes and exactly the same argument    proves the current version.
\end{remark}
We think of $f_{N,\st}$ and  $f_{N,\un}$ as the structured and uniform component of $f$ respectively.

From this point on we assume that
$N>2QV$. When convenient we identify
$\Z_N$ with the set $\{0,\ldots, N-1\}$
and we denote by  $(a,b) \! \! \! \mod{N}$
the set  that consists of those  $\xi\in  \Z_N$
such that $\xi+kN\in (a,b)$ for some $k\in \Z$.
 Note that $\xi\in \Xi_{N,\theta}$ if and only if there exists  $p\in \Z$
such that $\xi-\frac{p}{Q}N\in (-V,V) \! \! \mod{N}$. Hence,
$$
\Xi_{N,\theta}=\bigcup_{p=0}^{Q-1}\big(\frac{p}{Q}N-V,  \frac{p}{Q}N+V\big) \! \! \! \mod{N},
$$
We may choose to include or omit the endpoints of each interval (if they are integers), since for these values
the Fourier transform of $\phi_{N,\theta}$ is $0$. Hence, we can assume that
\begin{equation}\label{E:StructureXi1}
\Xi_{N,\theta}=\bigcup_{p=0}^{Q-1} \Xi_{N,\theta,p}
\end{equation}
where for $p=0,\ldots, Q-1$ we have
$$
\Xi_{N,\theta,p}:=\big\{\big\lfloor\frac{p}{Q}N\big\rfloor+j \! \! \! \mod{N}\colon -V<j\leq V\big\}.
$$
Note that for fixed $N>2QV$ and $\theta>0$ the sets $\Xi_{N,\theta,p}$, $p=0,\ldots, Q-1$, are disjoint, each of cardinality $2V$,
hence $|\Xi_{N,\theta}|=2QV$.
Furthermore,  if $N\equiv 1 \! \!  \mod{Q}$,   then
\begin{equation}\label{E:StructureXi2}
 \Xi_{N,\theta,p}=\big\{\frac{p}{Q}(N-1)+j \! \! \! \mod{N}\colon -V<j\leq V\big\}.
\end{equation}
Restricting $N$ to a specific congruence class $\! \! \mod{Q}$ is needed in the proof of Lemma~\ref{L:Structured'}.

We will also use the following consequence of Theorem~\ref{T:Structure}; it can be derived by combining
 Theorem~2.4 and  Lemma~A.1 in \cite{FH14}.
\begin{theorem}[Aperiodic multiplicative functions~{\bf \cite{FH14}}]\label{T:aperiodic}
Let $f\in \CM$ be an aperiodic multiplicative function and for $N\in \N$ let $I_N$ be a subinterval of $[N]$. Then
$$
\lim_{N\to\infty}\norm{ \one_{I_N}\cdot f} _{U^s(\Z_N)}=0 \ \text{ for every } s\in \N.
$$
\end{theorem}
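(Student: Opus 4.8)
The plan is to run the standard structure-theorem argument: use Theorem~\ref{T:Structure} to split $f$ into a structured and a uniform part on a suitable cyclic group $\Z_\tN$, show that aperiodicity makes \emph{both} parts negligible in $U^s$, and then transfer the resulting bound down to the subinterval $I_N\subset[N]$ by means of Lemma~\ref{L:compare}. Since $\norm a_{U^1(\Z_N)}\le\norm a_{U^2(\Z_N)}$, it is enough to fix some $s\ge 2$; fix also $\ve>0$. Let $\delta>0$ and $N_0$ be the constants furnished by Lemma~\ref{L:compare} for this $s$, for $\ve/3$ in place of $\ve$, and with $\kappa=2$, and then let $\theta>0$ together with the associated integers $Q=Q(\theta)$ and $V=V(\theta)$ be produced by Theorem~\ref{T:Structure} applied with this $s$ and with a parameter small enough (relative to $\delta$) that, after the innocuous rescaling needed to meet the hypothesis $|f|\le 1$ of Lemma~\ref{L:compare}, the uniform component has $U^s(\Z_\tN)$-norm at most $\delta$. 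For every sufficiently large $N$ pick, by Bertrand's postulate, a prime $\tN$ with $N\le\tN\le 2N$, and apply Theorem~\ref{T:Structure} on $\Z_\tN$: this gives $f=f_{\tN,\st}+f_{\tN,\un}$ on $[\tN]\supseteq[N]\supseteq I_N$, with $f_{\tN,\st}=f_\tN*\phi_{\tN,\theta}$, so that $\widehat{f_{\tN,\st}}$ is supported on $\Xi_{\tN,\theta}$, a set whose cardinality $2QV$ does not depend on $N$.

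The key step is to use aperiodicity to show that
\[
\max_{\xi\in\Xi_{\tN,\theta}}\bigl|\widehat{f_\tN}(\xi)\bigr|\longrightarrow 0\qquad(N\to\infty),
\]
with $\theta$, hence $Q$ and $V$, held fixed. By \eqref{E:StructureXi1} every $\xi\in\Xi_{\tN,\theta}$ lies within distance $V+1$ of some $\frac{p}{Q}\tN$ with $0\le p<Q$, so $\e(-n\xi/\tN)=\e(-np/Q)\,\e(n\alpha)$ with $|\alpha|\le(V+1)/\tN$. Splitting the sum defining $\widehat{f_\tN}(\xi)$ according to the residue $r$ of $n$ modulo $Q$ and pulling out the factor $\e(-np/Q)$ (which depends only on $r$), it suffices to show that $\frac{1}{\tN}\sum_{1\le n\le\tN,\ n\equiv r\,(\bmod\,Q)}f(n)\,\e(n\alpha)\to 0$ at a rate depending only on $Q$, $V$ and the aperiodicity of $f$. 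Now aperiodicity gives $\sum_{n\le m,\ n\equiv r\,(\bmod\,Q)}f(n)=o(m)$ uniformly in $r\in\{0,\dots,Q-1\}$, and the modulation $n\mapsto\e(n\alpha)$ has total variation $O(V)$ on $[1,\tN]$; so an Abel summation against the partial sums of $f$ along the progression yields the claim. Since for $s\ge 2$ the $U^s(\Z_\tN)$-norm of any linear phase equals $1$ and $\norm{\cdot}_{U^s}$ satisfies the triangle inequality, and since $|\widehat{\phi_{\tN,\theta}}|\le 1$ by \eqref{eq:fourier-phi}, it follows that
\[
\norm{f_{\tN,\st}}_{U^s(\Z_\tN)}\le\sum_{\xi\in\Xi_{\tN,\theta}}\bigl|\widehat{f_{\tN,\st}}(\xi)\bigr|\le 2QV\cdot\max_{\xi\in\Xi_{\tN,\theta}}\bigl|\widehat{f_\tN}(\xi)\bigr|\le\delta
\]
for all large $N$.

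We now conclude. For all large $N$ both $f_{\tN,\st}$ and the (rescaled) $f_{\tN,\un}$ have $U^s(\Z_\tN)$-norm at most $\delta$, so Lemma~\ref{L:compare}, applied with $\kappa=2$ and with the interval $J:=I_N\subset[N]$, gives $\norm{\one_{I_N}\cdot f_{\tN,\st}}_{U^s(\Z_N)}\le\ve/3$ and $\norm{\one_{I_N}\cdot f_{\tN,\un}}_{U^s(\Z_N)}\le 2\ve/3$; since $\one_{I_N}\cdot f=\one_{I_N}\cdot f_{\tN,\st}+\one_{I_N}\cdot f_{\tN,\un}$ on $[N]$, the triangle inequality yields $\norm{\one_{I_N}\cdot f}_{U^s(\Z_N)}\le\ve$ for all sufficiently large $N$, which is the assertion for this $s$ (and the case $s=1$ follows either from $\norm{\cdot}_{U^1(\Z_N)}\le\norm{\cdot}_{U^2(\Z_N)}$ or directly, since $\frac1N\sum_{n\in I_N}f(n)\to 0$ by aperiodicity). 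The one genuinely delicate point is the key step: relating the Fourier coefficients of $f$ at frequencies near the rationals $p/Q$ to the averages of $f$ along arithmetic progressions modulo $Q$ — there one must simultaneously absorb the $O(V)$-wide frequency window around $\frac{p}{Q}\tN$ and the fact that $Q\nmid\tN$, and it is precisely the summation by parts against the slowly varying factor $\e(n\alpha)$ that accomplishes this.
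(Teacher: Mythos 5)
Your proof is correct. The paper does not actually prove this statement---it is obtained by combining Theorem~2.4 and Lemma~A.1 of \cite{FH14}---and your argument is essentially a faithful reconstruction of that derivation: Theorem~\ref{T:Structure} supplies the decomposition, your ``key step'' (showing that aperiodicity forces $\max_{\xi\in\Xi_{\tN,\theta}}|\widehat{f_\tN}(\xi)|\to 0$ by splitting into residue classes modulo $Q$ and summing by parts against the slowly varying phase $\e(n\alpha)$ with $|\alpha|\le (V+1)/\tN$) is precisely how the $U^s(\Z_\tN)$-uniformity of aperiodic multiplicative functions is established in the cited source, and Lemma~\ref{L:compare} plays the role of Lemma~A.1 in passing to the subinterval $I_N$. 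The details check out: the bound $\norm{g}_{U^s(\Z_\tN)}\le\sum_{\xi}|\widehat g(\xi)|$ for $s\ge 2$, the uniformity in $\xi$ of the Abel-summation estimate (which depends only on $\sup_{m\le\tN}\big|\sum_{n\le m,\ n\equiv r\,(\bmod\,Q)}f(n)\big|=o(\tN)$ and on the fixed quantity $V$, so that the restriction $\tN\equiv 1 \bmod Q$ used elsewhere in the paper is indeed not needed here), and the rescaling of $f_{\tN,\un}$ to meet the hypothesis $|f|\le 1$ of Lemma~\ref{L:compare} are all handled correctly.
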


\subsection{Hal\'asz's theorem and consequences}
To facilitate  exposition, we define the distance between two multiplicative functions as in  \cite{GS15}:
\begin{definition}
If $f,g\in \CM$ we let $\D\colon \CM\times \CM\to [0,\infty]$ be given by
$$
\D(f,g)^2=\sum_{p\in \P} \frac{1}{p}\,\bigl(1-\Re\bigl(f(p) \overline{g(p)}\bigr)\bigr)
$$
\end{definition}
\begin{remark}
Note that if $|f|=|g|=1$, then
$
\D(f,g)^2=\sum_{p\in \P} \frac{1}{2p}\, |f(p)- g(p)|^2.
$
\end{remark}
It can be shown (see \cite{GS15}) that $\D$ satisfies the triangle inequality
$$
\D(f,g)\leq \D(f,h)+\D(h,g).
$$
Also for all $f_1,f_2,g_1,g_2\in \CM$ we have (see \cite[Lemma~3.1]{GS07})
\begin{equation}\label{E:triangle'}
\D(f_1f_2,g_1g_2)\leq \D(f_1,g_1)+\D(f_2,g_2).
\end{equation}
We will also use that  if $f\in \CM$ is such that for some $c$ in the unit circle we have $f(p)=c$ for all primes $p$, then
 $\D(f, n^{it})=\infty$  for every $t\neq 0$. In particular we have $\D(1, n^{it})=\infty$ for every $t\neq 0$.
Using this and the triangle inequality, one deduces that   for $f\in \CM$ we have $\D(f, n^{it})<\infty$ for at most one value of $t\in \R$. We will use the following celebrated result of G.~Hal\'asz:
\begin{theorem}[Hal\'asz~\cite{Hal68}]
\label{T:Halasz}
A multiplicative function  $f\in \CM$ has mean value zero if and only if
for every $t\in \R$  we either have $\D(f,n^{it})=\infty$
or
$
f(2^k)= -2^{ikt}$ for all $k\in \N$.
\end{theorem}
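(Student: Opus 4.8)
The plan is to deduce the statement from the classical theorem of Hal\'asz, of which it is essentially a repackaging. Since $\D(f,n^{it})<\infty$ can hold for at most one real $t$, the condition on the right-hand side splits into two cases, which I would treat separately: either $\D(f,n^{it})=\infty$ for every $t\in\R$, or there is a unique $t_0\in\R$ with $\D(f,n^{it_0})<\infty$, in which case the right-hand side further requires $f(2^k)=-2^{ikt_0}$ for all $k\in\N$. Accordingly, I must show that $f$ has mean value $0$ in the first case, and that in the second case $f$ has mean value $0$ precisely when $f(2^k)=-2^{ikt_0}$ for all $k\in\N$.

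For the first case I would invoke Hal\'asz's inequality in the form
$$
\Big|\frac1N\sum_{n=1}^N f(n)\Big|\ll \big(1+m_N(T)\big)\,e^{-m_N(T)}+\frac1T,\qquad
m_N(T):=\min_{|t|\le T}\ \sum_{p\le N}\frac{1-\Re\big(f(p)p^{-it}\big)}{p},
$$
valid for every $T\ge1$ with an absolute implied constant. Fix $T$. The hypothesis $\D(f,n^{it})=\infty$ for all $|t|\le T$, together with a routine compactness argument, forces $m_N(T)\to\infty$ as $N\to\infty$; hence $\limsup_N\big|\frac1N\sum_{n=1}^N f(n)\big|\ll 1/T$, and letting $T\to\infty$ yields mean value $0$.

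For the second case I would use the asymptotic (mean value) form of Hal\'asz's theorem: since $\D(f,n^{it_0})<\infty$, it gives
$$
\frac1N\sum_{n=1}^N f(n)=\frac{N^{it_0}}{1+it_0}\cdot\prod_{p\in\P}\Big(1-\frac1p\Big)\sum_{k=0}^\infty\frac{f(p^k)}{p^{k(1+it_0)}}+o_{N\to \infty}(1),
$$
where the Euler product converges to a fixed constant $c\in\C$. Everything then reduces to deciding when $c=0$, i.e. when one of the local factors vanishes. For an odd prime $p$ one has $\Re\big(\sum_{k\ge0}f(p^k)p^{-k(1+it_0)}\big)\ge 1-\sum_{k\ge1}p^{-k}=1-\tfrac1{p-1}>0$, so no odd prime contributes a zero. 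The factor at $2$ equals $\tfrac12\big(1+\sum_{k\ge1}a_k2^{-k}\big)$ with $a_k:=f(2^k)2^{-ikt_0}$ satisfying $|a_k|\le1$; since $\big|\sum_{k\ge1}a_k2^{-k}\big|\le\sum_{k\ge1}2^{-k}=1$ with equality only if $a_k=-1$ for every $k$, this factor vanishes exactly when $f(2^k)=-2^{ikt_0}$ for all $k\in\N$. Consequently, if this condition holds then $c=0$ and the mean value is $0$; if it fails then $c\ne0$ and $\frac1N\sum_{n=1}^N f(n)=\frac{N^{it_0}}{1+it_0}\,c+o(1)$, which converges to the nonzero number $c$ when $t_0=0$ and oscillates without converging when $t_0\ne0$. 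In either situation the mean value is not $0$, and the equivalence follows.

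The hard part is the analytic input rather than the bookkeeping: one needs the uniform-in-$t$ Hal\'asz bound for the first case and, more delicately, the exact mean value asymptotic with the convergent Euler product for the second (in particular the fact that, for a fixed pretentious exponent $t_0$, the truncated product $\prod_{p\le N}$ of local factors genuinely converges to the constant Euler product). Both are classical --- precisely the content of \cite{Hal68} and its standard refinements --- and granting them the remainder is elementary: the reduction to the two-case dichotomy uses only the uniqueness of the pretentious exponent, and the prime $2$ is singled out for the trivial reason that $\sum_{k\ge1}2^{-k}=1$ whereas $\sum_{k\ge1}p^{-k}<1$ for every odd prime $p$.
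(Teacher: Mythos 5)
The paper does not actually prove this statement: it is quoted as Hal\'asz's theorem and attributed to \cite{Hal68}, so there is no internal argument to compare against. Judged as a reconstruction from the classical inputs, your proposal is essentially sound: the reduction to a two-case dichotomy via uniqueness of the pretentious exponent $t_0$, the use of the uniform Hal\'asz inequality together with a compactness/monotonicity argument to show $m_N(T)\to\infty$ when $\D(f,n^{it})=\infty$ for all $t$, and the observation that among the local Euler factors only the one at $p=2$ can vanish (because $\sum_{k\ge1}2^{-k}=1$ while $\sum_{k\ge1}p^{-k}<1$ for odd $p$, and equality in the triangle inequality forces $f(2^k)2^{-ikt_0}=-1$ for every $k$) are all correct and are exactly how this formulation is normally derived from the mean value theorem.

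One intermediate claim is wrong as stated, although the conclusion survives. In the pretentious case the full Euler product $\prod_{p}\bigl(1-\tfrac1p\bigr)\sum_{k\ge0}f(p^k)p^{-k(1+it_0)}$ need not converge to a constant $c$: the hypothesis $\D(f,n^{it_0})<\infty$ controls $\sum_p\bigl(1-\Re(f(p)p^{-it_0})\bigr)/p$ and hence the \emph{modulus} of the partial products $\prod_{p\le N}$, but not the sums $\sum_{p\le N}\Im\bigl(f(p)p^{-it_0}\bigr)/p$, whose divergence makes the phase of the partial products drift slowly; this is precisely the slowly varying phase $\eta$ alluded to in the remark following Theorem~\ref{T:1}. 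Consequently your closing assertion that the average ``converges to the nonzero number $c$ when $t_0=0$'' is also false in general. The repair is immediate and costs nothing: state the asymptotic with the truncated product $\prod_{p\le N}$, and note that whether $f$ has mean value zero depends only on the modulus of $\frac1N\sum_{n\le N}f(n)$; the modulus of the truncated product does converge --- to $0$ exactly when the factor at $2$ vanishes, i.e.\ when $f(2^k)=-2^{ikt_0}$ for all $k$, and to a nonzero limit otherwise --- which yields the stated equivalence.
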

\begin{remark}
Since $f$ is aperiodic  if and only if  for every Dirichlet character $\chi$ the multiplicative function $f\cdot \chi$ has  mean value zero, this result  also gives necessary and sufficient conditions for aperiodicity.
\end{remark}

Another consequence of the mean value theorem of Hal\'asz (see for example~\cite[Theorem~6.3]{E79}) is the following result that gives easy to check necessary and sufficient conditions for a multiplicative function  to have a mean value (not necessarily zero).
\begin{theorem}\label{T:Converge}
Let $f\in \CM$. Then $f$ has a mean value  if and only if
 we either  have
 \begin{enumerate}
   \item 
   $\D(f, n^{it})=\infty$ for every $t\in \R$, or

    \item $\sum_{p\in \P}\frac{1}{p}(1-f(p))$ converges, or

 \item For some $t\in \R$ we  have
$\D(f,n^{it})<\infty$  and
   $f(2^k)=-2^{ikt}$  for all $k\in \N$.
\end{enumerate}
\end{theorem}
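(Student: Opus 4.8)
The plan is to deduce the characterization from Hal\'asz's mean value theorem. For a \emph{vanishing} mean value the qualitative Theorem~\ref{T:Halasz} suffices; for a \emph{nonzero} mean value one needs the asymptotic (Euler product) form of Hal\'asz's theorem, which is exactly \cite[Theorem~6.3]{E79}. Throughout I use the fact recorded above that for $f\in\CM$ one has $\D(f,n^{it})<\infty$ for at most one $t\in\R$. Observe first that convergence of $\sum_{p\in\P}\frac1p\big(1-f(p)\big)$ forces $\D(f,1)^2=\Re\sum_{p\in\P}\frac1p\big(1-f(p)\big)<\infty$, so condition (i) is incompatible with each of (ii) and (iii).

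\emph{The ``if'' direction.} If (i) holds, the hypothesis of Theorem~\ref{T:Halasz} is satisfied vacuously for every $t$, so $f$ has mean value $0$. If (iii) holds with parameter $t_0$, then $\D(f,n^{it})=\infty$ for every $t\ne t_0$ by the ``at most one $t$'' fact, while $f(2^k)=-2^{ikt_0}$ for all $k$ by hypothesis, so the hypothesis of Theorem~\ref{T:Halasz} again holds for every $t$ and $f$ has mean value $0$. If (ii) holds, then $f$ has a mean value by Delange's theorem --- a classical corollary of Hal\'asz's theorem --- namely the Euler product $\prod_{p\in\P}\big(1-\frac1p\big)\sum_{k\geq0}f(p^k)p^{-k}$, which converges since $\sum_{p\in\P}\frac1p(1-f(p))$ does.

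\emph{The ``only if'' direction.} Suppose $f$ has a mean value $L$. If $L=0$, Theorem~\ref{T:Halasz} gives that for every $t$ either $\D(f,n^{it})=\infty$ or $f(2^k)=-2^{ikt}$ for all $k$; if the first alternative holds for every $t$ we are in case (i), and otherwise the unique $t_0$ with $\D(f,n^{it_0})<\infty$ satisfies $f(2^k)=-2^{ikt_0}$ for all $k$, which is case (iii). If $L\ne0$, then (i) fails, so there is a unique $t_0$ with $\D(f,n^{it_0})<\infty$, and I would invoke the asymptotic Hal\'asz theorem: it writes $\frac1N\sum_{n=1}^N f(n)=\frac{N^{it_0}}{1+it_0}\,c_N+o(1)$ with $c_N=\prod_{p\leq N}\big(1-\frac1p\big)\sum_{k\geq0}f(p^k)p^{-k(1+it_0)}$. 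Since $\sum_{p\leq N}\frac1p\big(\Re(f(p)p^{-it_0})-1\big)\to-\D(f,n^{it_0})^2$ and $\arg c_N=O(\log\log N)=o(\log N)$, while $N^{it_0}$ does not converge when $t_0\ne0$, convergence of $\frac1N\sum_{n=1}^N f(n)$ to the nonzero limit $L$ forces $t_0=0$; hence $\D(f,1)<\infty$ and $c_N\to L$. Writing $c_N=\exp\big(\sum_{p\leq N}\frac{f(p)-1}{p}+h_N\big)$, where $h_N$ converges because $\big(1-\frac1p\big)\big(1+\sum_{k\geq1}f(p^k)p^{-k}\big)\exp\big(\frac{1-f(p)}{p}\big)=1+O(p^{-2})$ uniformly in $p$, one concludes that $\sum_{p\leq N}\frac{1-f(p)}{p}$ converges, i.e. (ii) holds.

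The main obstacle is the case $L\ne0$ above: the qualitative Theorem~\ref{T:Halasz} gives no information there, so one genuinely needs the quantitative asymptotic of Hal\'asz together with the elementary Euler-product estimates indicated (equivalently, Delange's theorem and a near-converse to it). Everything else is a direct application of Theorem~\ref{T:Halasz} and the ``at most one $t$'' fact. Since the required analytic input is precisely \cite[Theorem~6.3]{E79}, in the write-up I would cite that result and limit the argument to translating conditions (i)--(iii) into its hypotheses.
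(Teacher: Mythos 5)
Your argument is correct and follows the same route the paper intends: the paper offers no proof beyond the citation of Hal\'asz's mean value theorem in the form of \cite[Theorem~6.3]{E79}, and your write-up simply makes explicit the standard translation of that result (qualitative Hal\'asz for the zero-mean cases, the asymptotic Euler-product form plus the ``at most one $t$ with $\D(f,n^{it})<\infty$'' fact for the nonzero-mean case) into conditions (i)--(iii). No gaps; the only looseness is in the estimate for $\arg c_N$ and the branch-of-logarithm bookkeeping, both of which are standard and are in any case subsumed by the citation you propose to rely on.
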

\begin{remark}
Since $f\in \CM_{conv}$ if and only if  for every Dirichlet character $\chi$ the multiplicative function $f\cdot \chi$ has a mean value, this result  also gives necessary and sufficient conditions for a multiplicative function to be in $\CM_{conv}$.
\end{remark}

We deduce from the previous results the following criterion that will be used in the proof of Theorem~\ref{T:2} and the proof of Corollary~\ref{C:2} and \ref{C:3}:
\begin{proposition}\label{C:Halasz}
Let $f\in \CM$.
\begin{enumerate}
\item  If for some $k\in \N$,  $f$ takes values on the $k$-th roots of unity, then $f\in \CM_{conv}$.

\item If  $\alpha\in \R$ is not an integer and $f(p)=e(\alpha)$ for all $p\in \P$, then $f$ is aperiodic.
\end{enumerate}
\end{proposition}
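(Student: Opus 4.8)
The plan is to deduce both parts from Halász's theorem (Theorem~\ref{T:Halasz}) and the convergence criterion (Theorem~\ref{T:Converge}), using the key observation that for the functions under consideration the quantity $\D(f,n^{it})$ is either $\infty$ for all $t$, or finite for exactly one $t$ which can be computed explicitly. For part~(i), suppose $f$ takes values in the group $\mu_k$ of $k$-th roots of unity. To show $f\in\CM_{conv}$ it suffices, by the remark after Theorem~\ref{T:Converge}, to show that $f\cdot\chi$ has a mean value for every Dirichlet character $\chi$; so fix $\chi$ and set $g:=f\cdot\chi$. I would check which of the three alternatives of Theorem~\ref{T:Converge} holds for $g$. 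The point is that $|g(p)|=1$ for all $p$ and $g(p)^{k}=\chi(p)^{k}$, so $g(p)$ ranges over a finite set of unit-modulus values; using the remark that $\D(h,n^{it})=\infty$ whenever $t\neq0$ and $h(p)$ is eventually constant, together with the triangle inequality \eqref{E:triangle'}, one sees that $\D(g,n^{it})<\infty$ can hold for at most one $t$, and in fact forces $t=0$ unless $\chi$ is forced to pretend to be a twist — a case I would rule out by noting $g(p)^k$ is a root of unity times $\chi(p)^k$ and $n^{ikt}$ is never a Dirichlet character for $t\neq0$. Thus either alternative~(i) of Theorem~\ref{T:Converge} holds (i.e. $\D(g,n^{it})=\infty$ for all $t$), or $\D(g,1)<\infty$; in the latter case $\sum_p \frac1p(1-\Re g(p))<\infty$, and since $g(p)$ takes finitely many values this forces $\sum_p\frac1p(1-g(p))$ to converge, which is alternative~(ii). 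Either way $g$ has a mean value.

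For part~(ii), suppose $\alpha\notin\Z$ and $f(p)=e(\alpha)$ for every prime $p$. By the remark after Theorem~\ref{T:Halasz}, $f$ is aperiodic iff $f\cdot\chi$ has mean value zero for every Dirichlet character $\chi$. Fix $\chi$ and apply Halász's theorem to $g:=f\cdot\chi$: I must show that for every $t\in\R$ either $\D(g,n^{it})=\infty$, or $g(2^j)=-2^{ijt}$ for all $j$. For $t=0$: $\D(g,1)^2=\sum_p\frac1p(1-\Re(e(\alpha)\chi(p)))$; since $\chi$ is periodic with $\sum_{p}\frac{\chi(p)}{p}$ conditionally controlled, the dominant contribution is $(1-\Re e(\alpha))\sum_p\frac1p=\infty$ because $\Re e(\alpha)<1$ (as $\alpha\notin\Z$) — more carefully, I would split $p$ according to the residue class mod the modulus of $\chi$ on which $\chi$ is a fixed root of unity, and on the principal class $\chi(p)=1$ gives a divergent sum. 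For $t\neq0$: here $g(p)=e(\alpha)\chi(p)$ is a fixed unit-modulus constant on each arithmetic progression of primes, so again invoking the stated fact that $\D(h,n^{it})=\infty$ when $h(p)$ is eventually constant and $t\neq0$ (applied progression-by-progression, or directly since $e(\alpha)\chi$ pretends to be constant), we get $\D(g,n^{it})=\infty$. Hence for every $t$ the first alternative of Halász's theorem holds, so $g$ has mean value zero; as $\chi$ was arbitrary, $f$ is aperiodic.

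The main obstacle I anticipate is the bookkeeping around Dirichlet characters in both parts: the cited facts about $\D(\cdot,n^{it})=\infty$ are phrased for $f(p)=c$ constant on \emph{all} primes, whereas after twisting by $\chi$ the function $g$ is only constant on arithmetic progressions of primes. The clean way around this is to work on the principal progression $p\equiv1\pmod{q}$ (with $q$ the conductor of $\chi$), where $\chi(p)=1$, so that $\sum_{p\equiv1(q)}\frac1p(1-\Re g(p))=(1-\Re e(\alpha))\sum_{p\equiv1(q)}\frac1p=\infty$ by Dirichlet, giving $\D(g,1)=\infty$ in part~(ii); and for $t\neq0$ one compares $g$ with $n^{it}$ restricted similarly, using that $n^{it}$ does not pretend to be constant even along a single progression. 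A slightly slicker alternative, which I would mention, is to compute $\D(f,n^{it})$ directly (for part~(ii), $\D(f,n^{it})^2=\sum_p\frac1p(1-\Re(e(\alpha)p^{-it}))=\infty$ for \emph{every} $t$ since $\Re(e(\alpha)p^{-it})$ does not stay close to $1$), conclude from the remark after Theorem~\ref{T:Halasz} that $f$ itself has mean value zero, and then note that $f\chi$ is covered by exactly the same computation since $\chi$ only perturbs finitely many Euler factors' worth of the $\D$-sum in a way that cannot make a divergent sum converge. Once the character bookkeeping is set up correctly, both parts are short.
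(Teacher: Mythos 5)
Your overall strategy --- reduce to the twists $f\cdot\chi$ via the remarks after Theorems~\ref{T:Halasz} and \ref{T:Converge}, use the power trick with \eqref{E:triangle'} to rule out $t\neq0$, and in the remaining $t=0$ case exploit the finiteness of the value set (part (i)) or Dirichlet's theorem on primes $\equiv1\pmod q$ (part (ii)) --- is exactly the paper's, and your part (i) matches the paper's proof essentially line for line.

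Two spots in part (ii) are looser than they should be. For $t\neq0$ you propose to apply the fact that $\D(h,n^{it})=\infty$ when $h(p)$ is constant on primes ``progression-by-progression,'' or alternatively to say that $e(\alpha)\chi$ ``pretends to be constant.'' Neither is justified as stated: the quoted fact concerns the sum over \emph{all} primes and does not by itself give divergence of $\sum_{p\equiv a\,(q)}\frac1p\bigl(1-\Re(c\,p^{-it})\bigr)$ along a single progression (the divergence could a priori be carried by the other residue classes), and a non-principal $\chi$ does not pretend to be a constant (indeed $\D(\chi,1)=\infty$). The clean fix is the same power trick you already deploy in part (i): choose $m$ with $\chi^m(p)=1$ for all but finitely many $p$, so that $m\D(f\chi,n^{it})\ge\D(f^m\chi^m,n^{imt})=\D(f^m,n^{imt})+O(1)=\infty$ for $t\neq0$, since $f^m$ is genuinely constant on all primes; this is exactly what the paper does, reserving the restriction to $p\equiv1\pmod q$ for the $t=0$ case only. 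Separately, your ``slicker alternative'' rests on the claim that twisting by $\chi$ only perturbs finitely many terms of the $\D$-sum; that is false --- $\chi(p)\neq1$ on a positive density of primes, so the twist changes the summand at a positive proportion of terms and cannot be dismissed as an $O(1)$ perturbation. Drop that aside and route the $t\neq0$ case through the power trick, and your proof coincides with the paper's.
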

\begin{remarks}
$\bullet$ Sharper results can be obtained using a theorem of R.~Hall~\cite{Hall95} and the argument in \cite[Corollary~2]{GS07}.
For instance, it can be shown that if  $f(p)$  takes values in a finite subset of the unit disc for all
$p\in \P$,
   then  $f\in \CM_{conv}$, and if in addition $f(p)\neq 1$ for all $ p\in \P$, then $f$ is aperiodic.

$\bullet$ By taking averages in $\eqref{E:key}$ and using  that Part~$(ii)$ of the previous result  implies aperiodicity of $f^j$ for $j=1,\ldots, b-1$, we deduce that $d(S_{a,b})=\frac{1}{b}$.
\end{remarks}
\begin{proof}
We prove $(i)$. It suffices to show that for every Dirichlet character $\chi$ the multiplicative function $f\cdot \chi$ has a mean value. Note that $\chi$ takes values on roots of unity of  fixed order for all but finitely many primes (on which it is $0$).  Hence, it suffices to show that if for some $m\in \N$ a multiplicative function $g$  takes values on the $m$-th roots of unity   for all but finitely many primes, then $g$  has a mean value. So let $g$ be such a multiplicative function.
  If $\D(g,n^{it})=\infty$ for every $t\in \R$, then we are done by Theorem~\ref{T:Converge}.
  Suppose that  there exists $t\in \R$ such that
$\D(g,n^{it})<\infty$.
Using that $\D(1, n^{it})=\infty$ for every $t\neq 0$  we have by \eqref{E:triangle'} that
$$
m\D(g, n^{it})\geq \D(g^m, n^{imt})=\D(1, n^{imt})+O(1)=\infty, \ \text{ for every }
t\neq 0,
$$
where the lower bound follows from  \eqref{E:triangle'}. Hence, $t=0$, which implies that
$$
\sum_{p\in \P}\frac{1}{p}\,\bigl(1-\Re(g(p))\bigr)<\infty.
$$
Since $g$ takes finitely many values on the unit disc, there exists $c>0$ such that for all $p\in \P$ we either have $g(p)=1$ or  $1-\Re(g(p))\geq c$. Hence,
$$
\sum_{p\in \P, g(p)\neq 1}\frac{1}{p}<\infty.
$$
Since $|1-g(p)|\leq 2$ for all $p\in \P$, we deduce that
$$
\sum_{p\in \P}\frac{|1-g(p)|}{p}<\infty.
$$
Theorem~\ref{T:Converge} again gives that $g$ has a mean value, completing  the proof of $(i)$.

We prove $(ii)$.   Using the remark following Theorem~\ref{T:Halasz},
it suffices to show that  for every $t\in \R$ and Dirichlet character $\chi$ we have $\D(f\cdot \chi, n^{it})=\infty$.
So let $\chi$ be a Dirichlet character.
 Then there exists $m\in \N$ such that  $(\chi(p))^m=1$ for all but a finite number of primes $p$. Then  using \eqref{E:triangle'} we get
 $$
m\D(f\cdot \chi, n^{it})\geq \D(f^m\cdot \chi^m, n^{imt})=\D(f^m, n^{imt})+O(1)=\infty, \ \text{ for every }
t\neq 0,
$$
where the last distance is infinite since $f^m$ is constant on primes  and $mt\neq 0$.
It remains to show that $\D(f\cdot \chi, 1)=\infty$. Suppose that $\chi$ has period $d$.
Since $\chi(1)=1$, we have  $\chi(n)=1$ whenever $n\equiv 1 \! \! \! \mod{d}$, and since $f(p)=e(\alpha)$ for all $p\in \P$, we have
$$
\D(f\cdot \chi, 1)^2\geq (1-\cos(2\pi \alpha)) \cdot\sum_{p\in \P\cap (d\Z+1)}\frac{1}{p}=\infty,
$$
where we used that $(1-\cos(2\pi \alpha))\neq 0$  because $\alpha\notin \Z$ and the divergence of the last series follows from Dirichlet's theorem. This completes the proof of $(ii)$.
\end{proof}
\section{Proof of main results}
\subsection{Proof of Theorem~\ref{T:1}}
 We start with a few elementary lemmas.
 \begin{lemma}\label{L:Partial'}
Let $(V_n)$
be a bounded sequence of elements of a normed space such that
$$
\lim_{N\to \infty} \frac{1}{N}\sum_{n=1}^NV_n=V.
$$
For $N\in \N$ let $\tN>N$ be integers such  that the limit $\beta:=\lim_{N\to \infty}  \frac{N}{\tN}$ exists. Then  for every $\alpha\in \R$  we have
$$
\frac{1}{N}\sum_{n=1}^N \e\big( n\frac{\alpha}{\tN}\big)\, V_n=c\cdot V
$$
where $c= \frac{1}{\beta}\int_{0}^\beta \e\big(  \alpha y\big)\, dy$ if $\beta\neq 0$ and $c=1$ if $\beta=0$.
 \end{lemma}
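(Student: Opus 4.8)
The plan is to reduce the statement to the elementary fact that $\frac1N\sum_{n=1}^N \e(n\alpha/\tN)$ converges to $c$, and then to upgrade this weighted-by-scalars assertion to a statement about the averages of the $V_n$ by a standard summation-by-parts (Abel summation) argument. Concretely, write $S_N := \sum_{n=1}^N V_n$, so that by hypothesis $\frac1N S_N \to V$, and let $c_n := \e(n\alpha/\tN)$ (note $c_n$ depends on $N$ through $\tN$, so some care with the double limit is needed). The idea is to express $\frac1N\sum_{n=1}^N c_n V_n$ via Abel summation as a boundary term plus $-\frac1N\sum_{n=1}^{N-1}(c_{n+1}-c_n) S_n$, and to control the increments $c_{n+1}-c_n = \e(n\alpha/\tN)(\e(\alpha/\tN)-1)$, which are $O(|\alpha|/\tN)$ uniformly in $n$.

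First I would treat the case $\beta \ne 0$. Here $\tN \asymp N$, so each increment $c_{n+1}-c_n$ is $O(1/N)$, and there are $N$ of them; replacing $S_n$ by $nV + o(n)$ (uniformly, using $\frac1nS_n\to V$) turns the Abel sum into a Riemann sum: $-\frac1N\sum_{n=1}^{N-1}(c_{n+1}-c_n)\, nV$ plus a lower-order error. Writing $c_{n+1}-c_n \approx \frac{2\pi i\alpha}{\tN}\e(n\alpha/\tN)$ and $n/N \to y$, $n/\tN \to y/\beta \cdot \tfrac{N}{\tN}\cdot\ldots$ — more cleanly, set $y = n/\tN$ ranging over $[0,\beta]$ with spacing $1/\tN$, so the sum $\frac1{\tN}\sum (2\pi i\alpha)\e(\alpha y)\, (n/N)$ converges to $\frac{1}{\beta}\int_0^\beta 2\pi i\alpha\, \e(\alpha y)\, y\, dy$; combining with the boundary term $c_N\cdot\frac1N S_N \to \e(\alpha\beta)V$ and integrating by parts in the integral recovers exactly $c V$ with $c = \frac1\beta\int_0^\beta \e(\alpha y)\,dy$. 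Alternatively, and perhaps more transparently, I would avoid Abel summation on $V_n$ directly: first prove the scalar identity $\frac1N\sum_{n=1}^N \e(n\alpha/\tN) \to c$ (a direct Riemann-sum computation), then use the hypothesis $\frac1N\sum V_n \to V$ together with the fact that $n\mapsto \e(n\alpha/\tN)$ has bounded variation $O(|\alpha| N/\tN) = O(1)$ on $[1,N]$ to invoke a general "if $a_n\to$ Cesàro mean $V$ and $w_n$ has uniformly bounded variation with Cesàro mean $c$, then $\frac1N\sum w_n a_n \to cV$"-type lemma, applied coordinatewise in the normed space via Hahn–Banach (test against functionals), or directly by the block/Abel estimate since the space is normed.

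For the degenerate case $\beta = 0$: now $\tN/N \to \infty$, so $n\alpha/\tN \to 0$ uniformly for $n\le N$, hence $\e(n\alpha/\tN) = 1 + O(N|\alpha|/\tN) = 1 + o(1)$ uniformly in $n$, and since the $V_n$ are bounded, $\frac1N\sum_{n=1}^N \e(n\alpha/\tN)V_n - \frac1N\sum_{n=1}^N V_n \to 0$, giving the limit $V = c V$ with $c = 1$, as claimed. The main obstacle — though it is more bookkeeping than genuine difficulty — is handling the double limit carefully: $c_n = \e(n\alpha/\tN)$ depends on $N$, so one cannot simply cite a fixed-weight Tauberian statement; one must track that the bounded-variation bound on the weights is uniform in $N$ (it is, being $O(|\alpha|N/\tN) \to O(|\alpha|/\beta)$) and that the Riemann-sum convergence is uniform enough to pass to the limit. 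Writing it coordinatewise for scalar sequences (equivalently, pairing with a continuous linear functional and then noting the estimates are functional-independent) makes the normed-space generality cost nothing extra.
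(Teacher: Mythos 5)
Your proposal is correct and is essentially the paper's own argument: the paper likewise applies Abel summation to $V_n-V$ using $\|S_n\|=o(n)$ and the increment bound $|\e((n+1)\alpha/\tN)-\e(n\alpha/\tN)|\le 2\pi|\alpha|/\tN$ to kill the weighted average of $V_n-V$, and then evaluates the scalar limit $\frac1N\sum_{n=1}^N\e(n\alpha/\tN)\to c$ as a Riemann sum. The only cosmetic difference is that the paper handles $\beta=0$ with the same unified estimate rather than as a separate case.
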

 \begin{proof}
For $n\in \N$ let
 $$
 S_n:=\sum_{k=1}^n\big(V_k-V).
  $$
  Our assumption gives that   $S_n/n\to 0$ as $n\to \infty$.
Using partial summation we get that the modulus of the average
  $$
\frac{1}{N}\sum_{n=1}^N\e\big( n\frac{\alpha}{\tN}\big)\, \big(V_n-V)
  $$
 is at most
$$
\frac{1}{N}\big(\sum_{n=2}^{N-1}\norm{S_n} \big|\e\big( (n+1)\frac{\alpha}{\tN}\big)-\e\big( n\frac{\alpha}{\tN}\big)\big|+\norm{S_N}\big)+o_{N\to\infty}(1).
$$
Let $\varepsilon>0$. Since  $S_n/n\to 0$ as $n\to \infty$, we have $|S_n|\leq \varepsilon n$
for every sufficiently large $n$, and thus the last expression  is bounded by
$$
\frac{1}{N}\big(\sum_{n=2}^{N-1}\varepsilon n\frac{|2\pi\alpha|}{\tN}+\varepsilon N\big)+o_{N\to\infty}(1)\leq (\beta|\pi\alpha|+1)\varepsilon +o_{N\to\infty}(1).
$$
Since $\varepsilon$ is arbitrary, we get that
$$
\lim_{N\to \infty}\frac{1}{N}\sum_{n=1}^N\e\big( n\frac{\alpha}{\tN}\big)\, \big(V_n-V\big)=0.
$$
Note also that if $\beta>0$, then
$$
\lim_{N\to\infty} \frac{1}{N}\sum_{n=1}^N\e\big( n\frac{\alpha}{\tN}\big)=\frac{1}{\beta}\int_{0}^\beta \e\big( \alpha y\big)\, dy
$$
and the previous limit is $1$ if $\beta=0$.
 Combining the above we get the asserted claim.
 \end{proof}
Next we show that the discrete Fourier transform of elements of $\CM_{conv}$ along certain  ``major arc'' frequencies converges.
 \begin{lemma}\label{L:Fourier'}
 Let $f\in \CM_{conv}$. Let  $Q\in \N$, $p,\xi'\in \Z$, 
 and
 $$\xi_N=\frac{p}{Q}N+\frac{\xi'}{Q}, \quad N\in \N.$$
  Then the averages
 \begin{equation}\label{E:fouri}
 \frac{1}{N}\sum_{n=1}^{N}f(n)\, \e\big(-n\frac{\xi_N}{N}\big)
 \end{equation}
 converge.
 \end{lemma}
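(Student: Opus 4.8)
The plan is to exploit the fact that the frequency $\xi_N/N=p/Q+\xi'/(QN)$ is a fixed rational $p/Q$ plus a perturbation $\xi'/(QN)$ that varies slowly over $[1,N]$, and to combine the defining property of $\CM_{conv}$ along arithmetic progressions of common difference $Q$ with the partial summation estimate of Lemma~\ref{L:Partial'}.

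First I would write $\e\big(-n\tfrac{\xi_N}{N}\big)=\e\big(-n\tfrac pQ\big)\,\e\big(-n\tfrac{\xi'}{QN}\big)$ and split the average \eqref{E:fouri} according to the residue of $n$ modulo $Q$: writing each $n\in[N]$ uniquely as $n=Qm+r$ with $r\in\{1,\dots,Q\}$ and $m\ge 0$, the factor $\e(-n\tfrac pQ)$ equals the constant $\e(-r\tfrac pQ)$ on the $r$-th class, while $\e(-n\tfrac{\xi'}{QN})=\e(-r\tfrac{\xi'}{QN})\,\e(-m\tfrac{\xi'}{N})$ with $\e(-r\tfrac{\xi'}{QN})\to 1$ as $N\to\infty$. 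Hence \eqref{E:fouri} equals
\[
\sum_{r=1}^{Q}\e\big(-r\tfrac pQ\big)\,\e\big(-r\tfrac{\xi'}{QN}\big)\cdot\frac1N\sum_{m\ge 0,\ Qm+r\le N}f(Qm+r)\,\e\big(-m\tfrac{\xi'}{N}\big),
\]
so it suffices to prove that for each fixed $r$ the inner average converges.

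Fix $r\in\{1,\dots,Q\}$. Since $f\in\CM_{conv}$ (applied with $a=Q$ and $b=r$), the limit $V^{(r)}:=\lim_{M\to\infty}\frac1M\sum_{m=1}^{M}f(Qm+r)$ exists. Discarding the term $m=0$, which is bounded and hence contributes $O(1/N)$ to \eqref{E:fouri}, the inner sum runs over $1\le m\le M_r(N)-1$, where $M_r(N):=\#\{m\ge 0\colon Qm+r\le N\}$ satisfies $M_r(N)\le N$ and $(M_r(N)-1)/N\to 1/Q$. Then I would apply Lemma~\ref{L:Partial'} to the bounded sequence $V_m:=f(Qm+r)$, which has Ces\`aro limit $V^{(r)}$, taking $\alpha=-\xi'$, the role of $N$ played by $M_r(N)-1$, and that of $\tN$ played by $N$, so that $\beta=1/Q$; this gives
\[
\lim_{N\to\infty}\frac{1}{M_r(N)-1}\sum_{m=1}^{M_r(N)-1}f(Qm+r)\,\e\big(-m\tfrac{\xi'}{N}\big)=Q\Big(\int_0^{1/Q}\e(-\xi' y)\,dy\Big)V^{(r)}.
\]
Multiplying by $(M_r(N)-1)/N\to 1/Q$ and summing over $r$ then shows that \eqref{E:fouri} converges, with limit $\sum_{r=1}^{Q}\e(-r\tfrac pQ)\big(\int_0^{1/Q}\e(-\xi' y)\,dy\big)V^{(r)}$.

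The argument is essentially bookkeeping, and I do not expect a serious obstacle. The only points needing a little care are keeping the normalizations straight — the inner sum has about $N/Q$ rather than $N$ terms, which accounts for the factor $1/Q$ — and checking that $M_r(N)-1<N$ (which holds since $M_r(N)\le N$), so that Lemma~\ref{L:Partial'} genuinely applies with $\tN=N$. The substantive inputs are only the membership $f\in\CM_{conv}$, used on the progressions $(Qm+r)_m$, and Lemma~\ref{L:Partial'}, which absorbs the slowly varying weight $\e(-m\xi'/N)$.
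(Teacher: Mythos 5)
Your proposal is correct and follows essentially the same route as the paper: both split the average over residue classes modulo $Q$, invoke the defining property of $\CM_{conv}$ on the progressions $Qm+r$, and then absorb the slowly varying phase $\e(-m\xi'/N)$ via Lemma~\ref{L:Partial'} with the roles of $N$ and $\tN$ played by $\lfloor N/Q\rfloor$ (your $M_r(N)-1$) and $N$, so that $\beta=1/Q$. The bookkeeping of the normalizations and the treatment of the harmless factor $\e(-r\xi'/(QN))\to 1$ match the paper's argument.
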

 \begin{proof}
 Notice first that the expression in \eqref{E:fouri} is equal to
 $$
 \frac{1}{Q}\sum_{r=1}^Q\e\big(-r\frac{p}{Q}\big) \frac{1}{\lfloor N/Q \rfloor}\sum_{n=1}^{ \lfloor N/Q \rfloor}f(Qn+r)\, \e\big(-(Qn+r)\frac{\xi'}{QN}\big)+o_{N\to\infty}(1).
 $$
 Hence,   it suffices to show that for every fixed $Q,\xi',$ and $r\in [Q]$,  the averages
$$
 \frac{1}{ \lfloor N/Q \rfloor}\sum_{n=1}^{\lfloor N/Q\rfloor}f(Qn+r)\, \e\big(-(Qn+r)\frac{\xi'}{QN}\big)
 $$
 converge.
 Since $\e(-r\xi'/(QN))\to 1$ as $N\to \infty$, it suffices to show that  the averages
 \begin{equation}\label{E:Av1'}
 \frac{1}{ \lfloor N/Q \rfloor}\sum_{n=1}^{ \lfloor N/Q \rfloor}f(Qn+r)\, \e\big(-n\frac{\xi'}{N}\big)
  \end{equation}
 converge.
  Since $f\in \CM_{conv}$,  we have that the averages
 $$
 \frac{1}{N}\sum_{n=1}^Nf(Qn+r)
 $$
converge and using Lemma~\ref{L:Partial'} for $a(n):=f(Qn+r)$ , $N$ replaced by $\lfloor N/Q \rfloor$, and $\tN$ replaced by $N$,   we deduce  the needed convergence for the averages  \eqref{E:Av1'}. This completes the proof.
 \end{proof}
Next we   analyze the asymptotic behavior of the relevant  ergodic averages  with weights given by
the structured components (defined by Theorem~\ref{T:Structure}) of  an element of $\CM_{conv}$.
 \begin{lemma}\label{L:Structured'} Let  $\theta>0$, $Q\in \N$, and  $f \in \CM_{conv}$.
For $N\in \N$ let $\tN>N$  be a prime that satisfies  $\tN\equiv 1 \! \! \mod{Q}$ and suppose that the limit $\beta:=\lim_{N\to \infty}  \frac{N}{\tN}$ exists.
Let
$f_{\tN,\st}:=f_{\tN}*\phi_{\tN,\theta}$ where $\phi_{\tN,\theta}$ is defined by \eqref{eq:fourier-phi} and the convolution product is defined in $\Z_\tN$. Then for every
probability space $(X, \CX,\mu)$,  invertible commuting measure preserving transformations $T_1,\ldots, T_\ell\colon X\to X$, functions $F_1,\ldots, F_m\in L^\infty(\mu)$, and polynomials $p_{i,j}\colon \Z \to\Z$, $i=1,\ldots \ell$, $j=1,\ldots, m$,  the averages
\begin{equation}\label{E:Str1'}
\frac{1}{N}\sum_{n=1}^N f_{\tN,\st}(n)\cdot   (\prod_{i=1}^\ell T_i^{p_{i,1}(n)})
F_1\cdot \ldots \cdot (\prod_{i=1}^\ell T_i^{p_{i,m}(n)})
F_m
\end{equation}
converge in $L^2(\mu)$.
\end{lemma}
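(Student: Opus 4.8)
The plan is to expand $f_{\tN,\st}=f_{\tN}*\phi_{\tN,\theta}$ into its Fourier series on $\Z_\tN$ and reduce the problem to a finite linear combination of ``twisted'' ergodic averages, each of which converges by combining Walsh's theorem with Lemma~\ref{L:Partial'}. Concretely, by \eqref{eq:fourier-phi} we have, for $n\in[N]\subset\Z_\tN$,
$$
f_{\tN,\st}(n)=\sum_{\xi\in\Xi_{\tN,\theta}}\widehat{f_\tN}(\xi)\,\Bigl(1-\Bigl\Vert\frac{Q\xi}{\tN}\Bigr\Vert\frac{\tN}{QV}\Bigr)\,\e\bigl(n\tfrac{\xi}{\tN}\bigr),
$$
a sum over the $|\Xi_{\tN,\theta}|=2QV$ frequencies, a number that depends only on $\theta$ and not on $N$. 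So it suffices to prove that for each fixed $\xi=\xi_N\in\Xi_{\tN,\theta}$ the averages
$$
\frac{1}{N}\sum_{n=1}^N \widehat{f_\tN}(\xi_N)\,\e\bigl(n\tfrac{\xi_N}{\tN}\bigr)\cdot(\prod_{i=1}^\ell T_i^{p_{i,1}(n)})F_1\cdots(\prod_{i=1}^\ell T_i^{p_{i,m}(n)})F_m
$$
converge in $L^2(\mu)$; then one takes the (bounded, finitely many) linear combination with the weights $1-\Vert Q\xi_N/\tN\Vert\,\tN/(QV)$, which themselves converge as $N\to\infty$ since $\xi_N$ lies within $V$ of $\frac{p}{Q}(\tN-1)$ by \eqref{E:StructureXi2} (this is exactly why we imposed $\tN\equiv 1\bmod Q$).

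\textbf{Key steps.} First, fix $p\in\{0,\dots,Q-1\}$ and an integer $j$ with $-V<j\le V$; by \eqref{E:StructureXi2} the relevant frequency is $\xi_N=\frac{p}{Q}(\tN-1)+j$, so $\frac{\xi_N}{\tN}=\frac{p}{Q}+\frac{j-p/Q}{\tN}+o(1/\tN)$, and hence $\e(n\xi_N/\tN)=\e(n\tfrac{p}{Q})\cdot\e\bigl(n\tfrac{\alpha}{\tN}\bigr)+o(1)$ uniformly for $n\le N$, where $\alpha:=j-p/Q$ is a fixed real. Second, the coefficient $\widehat{f_\tN}(\xi_N)=\widehat{f_N}(\cdot)$-type Fourier coefficient: writing it out, $\widehat{f_\tN}(\xi_N)=\frac{1}{\tN}\sum_{n=1}^{\tN}f(n)\e(-n\xi_N/\tN)$, which by Lemma~\ref{L:Fourier'} (applied with the roles of $N,\tN$ as here, noting $\xi_N$ has the required form $\frac{p}{Q}\tN+\frac{\xi'}{Q}$ after absorbing constants) converges to a limit as $N\to\infty$; call it $c_\xi$. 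So modulo a vanishing error the average becomes $c_\xi\cdot\frac{1}{N}\sum_{n=1}^N\e(n\tfrac{p}{Q})\e(n\tfrac{\alpha}{\tN})\cdot V_n$ with $V_n$ the ergodic product. Third, split the sum over $n$ according to the residue $n\bmod Q$: on each residue class $n=Qk+r$ the factor $\e(n\tfrac p Q)$ is the constant $\e(r p/Q)$, and $V_{Qk+r}$ is again a polynomial ergodic average in $k$ (with polynomials $p_{i,j}(Qk+r)$), so by Walsh's theorem (Theorem~\ref{T:Walsh}) $\frac1K\sum_{k=1}^K V_{Qk+r}$ converges in $L^2(\mu)$. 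Now apply Lemma~\ref{L:Partial'} with this $(V_{Qk+r})_k$, with the small-modulus frequency $\alpha/\tN\approx\alpha Q/(\tN Q)$ and the appropriate $\beta$, to conclude that $\frac1N\sum_{Qk+r\le N}\e\bigl((Qk+r)\tfrac{\alpha}{\tN}\bigr)V_{Qk+r}$ converges. Summing over $r\in\{0,\dots,Q-1\}$ and over $p,j$, then over $\xi\in\Xi_{\tN,\theta}$ with the convergent coefficients, gives convergence of \eqref{E:Str1'}.

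\textbf{Main obstacle.} The routine parts — the Fourier expansion, the finiteness of $\Xi_{\tN,\theta}$, the residue-class decomposition — are bookkeeping. The genuinely delicate point is handling the two ``moving'' ingredients simultaneously: the Fourier coefficient $\widehat{f_\tN}(\xi_N)$ depends on $N$ through $\tN$ and only converges in the Ces\`aro sense supplied by Lemma~\ref{L:Fourier'} (this is where membership in $\CM_{conv}$, rather than merely $\CM$, is essential), while the exponential phase $\e(n\xi_N/\tN)$ also drifts with $N$ and must be separated cleanly into a genuinely $Q$-periodic part $\e(np/Q)$ and a slowly-varying part $\e(n\alpha/\tN)$ whose contribution is exactly what Lemma~\ref{L:Partial'} is designed to control. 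One must be careful that the $o_{N\to\infty}(1)$ errors incurred in these approximations are uniform in $n\in[N]$ and survive averaging against the bounded ergodic products $V_n$; since everything in sight is bounded by $1$ this causes no real trouble, but it is the step where the hypotheses $\tN$ prime, $\tN\equiv 1\bmod Q$, and $\lim N/\tN=\beta$ all get used and so deserves care.
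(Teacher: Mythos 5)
Your proposal is correct and follows essentially the same route as the paper: expand $f_{\tN,\st}$ into its $2QV$ Fourier modes, use $\tN\equiv 1 \bmod Q$ to write each frequency as $\frac{p}{Q}\tN+\frac{\xi'}{Q}$, invoke Lemma~\ref{L:Fourier'} for the coefficients, and split the phase into a $Q$-periodic part handled by Theorem~\ref{T:Walsh} and a slowly varying part handled by Lemma~\ref{L:Partial'}. The only cosmetic difference is that you make the residue-class decomposition modulo $Q$ explicit before applying Walsh's theorem, whereas the paper applies it directly to the averages weighted by $\e(np/Q)$.
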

\begin{proof}
By the definition of $f_{\tN,\st}$ and $\phi_{\tN,\theta}$  we have that
$$
f_{\tN,\st}(n)= \sum_{\xi \in \Xi_{\tN,\theta}} \widehat{f_{\tN}}(\xi)\widehat{\phi_{\tN,\theta}}(\xi) \e\big(n\frac{\xi}{\tN}\big), \quad n\in
[\tN],
$$
where  $\Xi_{\tN,\theta}$ is defined in  \eqref{eq:def-Xi}.
Since $\tN\equiv 1 \! \! \mod{Q}$, it follows from  \eqref{E:StructureXi1} and \eqref{E:StructureXi2}  that for $\tN>2QV$ if   $\xi\in \Xi_{\tN,\theta}$, then $\xi$ can be uniquely represented as
$$
\xi=\frac{p}{Q}\tN+\frac{\xi'}{Q}
$$
for some $p\in \{0,\ldots, Q-1\}$ and $\xi'\in \Xi'_{p,\theta}$ where for $p=0,1,\ldots, Q-1$ we have
$$
\Xi'_{p,\theta}:=
\big\{-p+jQ\colon -V< j\leq V\big\}.
$$
Hence, it suffices to show that the averages \eqref{E:Str1'} satisfy the asserted asymptotic
  when
the (finite) sequence  $(f_{\tN,\st}(n))_{n\in [N]}$ in \eqref{E:Str1'} is replaced by
the sequence
$$
\widehat{f_{\tN}}\big(\frac{p}{Q}\tN+\frac{\xi'}{Q}\big)\cdot \widehat{\phi_{\tN,\theta}}
\big(\frac{p}{Q}\tN+\frac{\xi'}{Q}\big)\cdot  \e\big(n(\frac{p}{Q}+\frac{\xi'}{Q\tN})\big), \quad n\in[N],
$$
for all  $p\in \{0,\ldots, Q-1\}$ and  $\xi'\in \Xi'_{p,\theta}$.

Recall that if $f\colon \Z_\tN\to \C$ and $\xi\in \Z_\tN$, then $\widehat{f}(\xi)=\frac{1}{\tN}\sum_{n\in \Z_\tN}f(n)\, \e\big(-n\frac{\xi}{\tN}\big)$. By Lemma~\ref{L:Fourier'} the   limit
$$
\lim_{N\to\infty} \widehat{f_{\tN}}\big(\frac{p}{Q}\tN+\frac{\xi'}{Q}\big)
$$
exists and   it follows from  \eqref{eq:fourier-phi}  that
$$
\widehat{\phi_{\tN,\theta}}
\big(\frac{p}{Q}\tN+\frac{\xi'}{Q}\big)=1-\frac{\xi'}{QV}, \quad \text{whenever }\  \tN\geq 2QV.
$$
As a consequence, both terms  can be factored out from the averaging operation.

It remains to  deal with the term  $\e\big(n(\frac{p}{Q}+\frac{\xi'}{Q\tN})\big)$. Let
$$
V_n:=(\prod_{i=1}^\ell T_i^{p_{i,1}(n)})
F_1\cdot \ldots \cdot (\prod_{i=1}^\ell T_i^{p_{i,m}(n)})
F_m, \quad n\in \N.
$$
By Theorem~\ref{T:Walsh} we have that the averages
$$
\frac{1}{N}\sum_{n=1}^N \e\big(n\, \frac{p}{Q}\big) \, V_n
$$
converge in $L^2(\mu)$.
Using  Lemma~\ref{L:Partial'} we deduce that for all $p,\xi'\in \Z$, $Q\in \N$,  the averages
$$
\frac{1}{N}\sum_{n=1}^N \e\big(n(\frac{p}{Q}+\frac{\xi'}{Q\tN})\big) \, V_n
$$
converge in $L^2(\mu)$. This completes the proof.
\end{proof}
We are ready now to prove Theorem~\ref{T:1}.
 \begin{proof}[Proof of Theorem~\ref{T:1}]
 Let  $\varepsilon>0$. Without loss of generality we can assume that all functions are bounded by $1$.
We let $s\geq 2$ be the integer and $C_s$ be the implicit constant defined in Lemma~\ref{L:VDC}.
  We apply Lemma~\ref{L:compare} for  $\varepsilon/(2C_s)$ in place of $\varepsilon$ and
 for $\kappa:=2$.
 We get that  there exist $\delta>0$ and $N_0\in \N$
such that for all integers $N, \tN$ with $ N_0\leq sN\leq \tN\leq 2 sN$  and $f\colon \Z_\tN\to \C$ with $|f|\leq 1$, the following implication holds:
\begin{equation}\label{E:delta}
\text{if }\ \norm f_{U^s(\Z_\tN)} \leq\delta, \ \text{ then }\ \norm{ \one_{[N]}\cdot  f}_{U^s(\Z_{sN})}\leq \frac{\ve}{2C_s}.
\end{equation}
  We use the structural result of  Theorem~\ref{T:Structure} for this $\delta$ in place of $\varepsilon$ and for
   the previously defined $s$.
  We  get that  there exists $\theta=\theta(\delta,s)>0$ such that for all large enough
   $N\in \N$, if  $\tN$ denotes  the smallest prime such that $\tN>s N$ and
$\tN\equiv 1 \! \! \mod{Q}$ ($Q$ was introduced in Section~\ref{SS:Structure} and  depends only on $\theta$),
then we have the  decomposition
\begin{equation}\label{E:decomposition}
  f(n)=f_{\tN,\st}(n)+ f_{\tN,\un}(n), \qquad n\in [\tN],
\end{equation}
where   $f_{\tN,\st}=f_{\tN}*\phi_{\tN,\theta}$ ($\phi_{\tN,\theta}$ is defined by \eqref{eq:fourier-phi}) and
$$
\norm{f_{\tN,\un}}_{U^{s}(\Z_\tN)}\leq\delta.
$$
It follows from \eqref{E:delta} that for all large enough $N$ we have
\begin{equation}\label{E:bound}
\norm{ \one_{[N]}\cdot f_{\tN,\un}}_{U^{s}(\Z_{sN})}\leq\frac{\varepsilon}{2C_s}.
\end{equation}
Note also that the prime number theorem on arithmetic progressions implies  that
$$
\lim_{N\to \infty}\frac{N}{\tN}=\frac{1}{s}.
$$

 For $\ell,m\in \N$ let  $(X, \CX,\mu)$ be a probability space,  $T_1,\ldots, T_\ell\colon X\to X$ be  invertible commuting measure preserving transformations, $F_1,\ldots, F_m\in L^\infty(\mu)$, $p_{i,j}\colon \Z \to\Z$ be polynomials where  $i=1,\ldots \ell$, $j=1,\ldots, m$. Let
$$
V_n:=(\prod_{i=1}^\ell T_i^{p_{i,1}(n)})
F_1\cdot \ldots \cdot (\prod_{i=1}^\ell T_i^{p_{i,m}(n)})
F_m, \quad n\in \N.
$$
If $N\in \N$, for a  given
  $a_{N}\colon [N]\to \C$
 we define
$$
A_N(a_{N}):=\frac{1}{N}\sum_{n=1}^N a_N(n)\cdot V_n.
$$
Since $f_{\tN,\un}$ is bounded by $2$ and the functions $F_i$ are bounded by $1$, it follows from Lemma~\ref{L:VDC} and \eqref{E:bound} that
$$
\limsup_{N\to\infty}\norm{A_N(f_{\tN,\un})}_{L^2(\mu)}\leq \ve.
$$
Hence,
$$
\limsup_{N\to\infty}\norm{A_{N}(f)-A_{N}(f_{\tN,\st})}_{L^2(\mu)}\leq \varepsilon.
$$
Furthermore,   by Lemma~\ref{L:Structured'} we have that the  averages
    $$
      A_{N}(f_{\tN,\st})
  $$
converge in $L^2(\mu)$ as $N\to \infty$.
  Combining the above we deduce  that the sequence  $(A_{N}(f))$ is Cauchy in $L^2(\mu)$ and hence it converges in $L^2(\mu)$. Therefore, the sequence $(f(n))$ is a good universal weight for polynomial multiple mean convergence.

 Finally, we prove the last claim of Theorem~\ref{T:1}. If  the multiplicative function $f$ is aperiodic,
 then by Theorem~\ref{T:aperiodic} we have that $\lim_{N\to\infty}\norm{ \one_{[N]}\cdot  f}_{U^s(\Z_{sN})}=0$ for every $s\geq 2$.
Using Lemma~\ref{L:VDC} we deduce that the averages $A_N(f)$ converge to $0$ in $L^2(\mu)$. This verifies the asserted convergence.
 \end{proof}

\subsection{Proof of Theorem~\ref{T:2}}
\begin{proof}[Proof of part $(i)$ of  Theorem~\ref{T:2}]
Recall that the range of $f$ is contained in a  set of the form
$R=\{1, \zeta, \ldots, \zeta^{k-1}\}$ where  $\zeta$ is a root of unity of order $k$. Then
\begin{equation}\label{E:1}
\one_{f^{-1}(K)}(n)=\sum_{z\in K} \frac{1}{k}\sum_{j=0}^{k-1} z^{-j}(f(n))^j.
\end{equation}

We establish the first claim of part $(i)$.
 For $\ell,m\in \N$ let  $(X, \CX,\mu)$ be a probability space,  $T_1,\ldots, T_\ell\colon X\to X$ be  invertible commuting measure preserving transformations, $F_1,\ldots, F_m\in L^\infty(\mu)$, $p_{i,j}\colon \Z \to\Z$ be polynomials where  $i=1,\ldots \ell$, $j=1,\ldots, m$.
 Using  \eqref{E:1} we see that in order to verify the asserted convergence
 it suffices to show that
for
$$
 V_n:= (\prod_{i=1}^\ell T_i^{p_{i,1}(n)})
F_1\cdot \ldots \cdot (\prod_{i=1}^\ell T_i^{p_{i,m}(n)})F_m, \quad n\in \N,
$$
the averages
$$
\frac{1}{N}\sum_{n=1}^N (f(n))^j\, V_n
$$
converge in $L^2(\mu)$ for $j=0,\ldots, k-1$. This follows from the first part of Theorem~\ref{T:1} and the fact that under the stated assumptions
on the range of $f$ we have $f^j\in \CM_{conv}$ for $j=0,\ldots, k-1$ by part $(i)$ of Proposition~\ref{C:Halasz}.

We establish now the second claim in part $(i)$. Suppose that $f^j$ is aperiodic for $j=1,\ldots, k-1$. By Proposition~\ref{P:Gowers}
it suffices to show that the set $f^{-1}(K)$ is Gowers uniform.
Let $s\geq 2$ be an integer. We claim that
$$
\lim_{N\to \infty} \norm{\one_{f^{-1}(K)}-\frac{|K|}{k}}_{U^s(\Z_N)}=0.
$$
Using   \eqref{E:1} and   the triangle inequality for the $U^s$-norms, we see that in order to verify the claim   it suffices to show  that for $j=1,\ldots, k-1$ we have $\lim_{N\to\infty}\norm{f^j}_{U^s(\Z_N)}=0$ for every $s\in \N$. Since $f^j$ is by assumption aperiodic for $j=1,\ldots, k-1$, this follows from Theorem~\ref{T:aperiodic}.
\end{proof}

\begin{proof}[Proof of part $(ii)$ of  Theorem~\ref{T:2}]
We claim that if $F$ is a Riemann integrable function on $\T$ with integral zero, then  $\big((F\circ f)(n)\big)$ is
a Gowers uniform sequence, meaning,
\begin{equation}\label{E:GU}
\lim_{N\to\infty}\norm{F\circ f}_{U^s(\Z_N)}=0, \ \text{ for every } s\in \N.
\end{equation}
Applying this for $F:=\one_K-m_\T(K)$, and using that $m_\T(K)>0$, we deduce that the set $f^{-1}(K)$ is Gowers uniform; hence
 by Proposition~\ref{P:Gowers} it
is a set of polynomial multiple recurrence and  mean convergence.

We verify now \eqref{E:GU}. Without loss of generality we can assume that $\norm{F}_\infty\leq 1/2$.  Let $s\in \N$ and $\varepsilon>0$.

We first claim that the sequence $(f(n))$ is equidistributed on the unit circle. Indeed, using Weyl's equidistribution criterion it suffices to show that for every non-zero $j\in \Z$ we have
$$
\lim_{N\to\infty}\frac{1}{N}\sum_{n=1}^N (f(n))^j=0.
$$
This follows  at once since by assumption $f^j$ is aperiodic for $j\in \N$, hence it has average $0$. Taking complex conjugates we get  a similar property for all negative $j$ as well.

Since $F$ is Riemann integrable, bounded by $1/2$, and has zero mean, there exists a trigonometric polynomial $P$ on $\T$, bounded by $1$,  with zero constant term, such that
$$
\norm{F-P}_{L^1(m_\T)}\leq \big(\frac{\varepsilon}{2}\big)^{2^s}.
$$
Since $(f(n))$ is equidistributed in $\T$ and the function  $F-P$ is Riemann integrable, we deduce that
$$
\lim_{N\to \infty}\frac{1}{N}\sum_{n=1}^N|F(f(n))-P(f(n))|=\norm{F-P}_{L^1(m_\T)}\leq \big(\frac{\varepsilon}{2}\big)^{2^s}.
$$
Using this, the fact that $|F-P|$ is bounded by $2$, and the estimate
 $$
 \norm{a}^{2^s}_{U^s(\Z_N)}\leq \norm{a}^{2^s-1}_{L^\infty(\Z_N)} \, \norm{a}_{L^1(\Z_N)}
 $$
which  can be easily proved  using the inductive definition of the norms $U^s(\Z_N)$,  we deduce that
\begin{equation}\label{E:approx}
\limsup_{N\to\infty}\norm{F\circ f-P\circ f}_{U^s(\Z_N)}\leq \varepsilon.
\end{equation}

We know by assumption that $f^j$ is aperiodic for all $j\in \N$ and taking complex conjugates we get that $f^j$ is aperiodic for all non-zero $j\in \Z$.   Hence,
  Theorem~\ref{T:aperiodic} gives that $\lim_{N\to \infty} \norm{f^j}_{U^s(\Z_N)}=0$ for all non-zero  $j\in \Z$. Since the trigonometric polynomial $P$ has zero constant term,
it follows by the triangle inequality that $\lim_{N\to \infty} \norm{P\circ f}_{U^s(\Z_N)}=0$.
From this and \eqref{E:approx} we deduce that
$$
\limsup_{N\to\infty}\norm{F\circ f}_{U^s(\Z_N)}\leq \varepsilon.
$$
Since $\varepsilon>0$ is arbitrary, we get $\lim_{N\to\infty}\norm{F\circ f}_{U^s(\Z_N)}=0$ and the proof is complete.
\end{proof}

\subsection{Proof of Corollaries~\ref{C:2} and \ref{C:3}}

 \begin{proof}[Proof of Corollary~\ref{C:2}]
Let $b\geq 2$ be an integer,  $\zeta$ be a root of unity of order $b$, and $a\in \{0,\ldots, b-1\}$.

In order to deal with the set $S_{\omega, A,b}$ we define the multiplicative function
$f_1$  by $f_1(p^k)=\zeta$ for all  $k\in \N$ and primes $p$. Using  part $(ii)$ of Proposition~\ref{C:Halasz}
 we deduce  that $f_1^j$ is aperiodic for $j=1,\ldots, b-1$.
Applying  Theorem~\ref{T:2} for this multiplicative function and for   $K:=\{\zeta^a\colon a\in A\}$
we deduce the asserted claims for the set $S_{\omega, A,b}$.

 In a similar fashion we prove the asserted claims for the set  $S_{\Omega,A, b}$;
 the only difference is that we apply Theorem~\ref{T:2}
 for the multiplicative function $f_2$ defined  by $f_2(p^k)=\zeta^k$,  for all  $k\in \N$ and primes $p$.
\end{proof}
\begin{proof}[Proof of Corollary~\ref{C:3}]
In order to  deal with the set $S_{\Omega,A, \alpha}$ we define the multiplicative function $f_1$  by $f_1(p^k)=\e(\alpha)$ for all  $k\in \N$ and primes $p$. Using  part $(ii)$ of Proposition~\ref{C:Halasz}
 we deduce  that $f_1^j$ is aperiodic  for all $j\in \N$.
Applying  Theorem~\ref{T:2} for this multiplicative function
and for   $K:=\{\e(t)\colon t\in (-A)\cup A\}$ we deduce the asserted claims for the set $S_{\Omega,A, \alpha}$.

 In a similar fashion, we prove the asserted claims for the set  $S_{\Omega,A,\alpha}$;
 the only difference is that we apply Theorem~\ref{T:2}
 for the multiplicative function $f_2$ defined  by $f_2(p^k)=\e(k\alpha)$ for all  $k\in \N$ and primes $p$.
\end{proof}

\subsection{Extension to nilpotent groups}\label{SS:nilpotent} Essentially the same  arguments used in the previous subsections  can be replicated in order to extend the main results of this article to the case where
the transformations $T_1,\ldots, T_\ell$ generate a nilpotent group.
The only extra difficulty that we do not address here is to prove a variant of the uniformity estimates of
Lemma~\ref{L:VDC} that deals with this more general setup. This requires a non-trivial modification of the PET induction argument
used in \cite[Lemma 3.5]{FHK13} along the lines of the argument used to prove \cite[Theorem 4.2]{W12}. Assuming these estimates,  substituting the convergence result of Theorem~\ref{T:Walsh} with its nilpotent version (again
due to M.~Walsh), and the multiple recurrence result of Theorem~\ref{T:BL} with a result of S.~Leibman~\cite{Lei98},
the rest of the arguments carries
 without any change.

\subsection{An alternate approach for weak convergence}\label{SS:weak}
If one is satisfied  with analyzing  weak convergence of the multiple ergodic averages in our main results (which suffices for proving multiple recurrence),  then an alternate  way to proceed is as follows:
  Using the main result   from \cite{F14} we get  that  sequences of the form
   $C(n)=\int F_0\cdot T_1^nF_1\cdots T_\ell^nF_\ell\, d\mu$, $n\in \N$,   can be decomposed in two terms, one that is an $\ell$-step nilsequence $(N(n))$ and another that contributes negligibly in evaluating  weighted averages of the form
   $\frac{1}{N}\sum_{n=1}^N f(n) \, C(n)$. This reduces matters to analyzing the limiting behavior of averages of the form
   $\frac{1}{N}\sum_{n=1}^N f(n) \,  N(n)$, a task that has been carried out in \cite{FH14}.
   This way one can prove a version of Theorem~\ref{T:1} and related corollaries that deal with weak convergence,   avoiding the full strength of the main structural result in \cite{FH14}.
\subsection{An alternate approach for recurrence}\label{SS:altrec}
We mention here an alternate way to prove ``linear'' multiple recurrence results for shifts of the sets
$S_{\omega, A,b}$,  $S_{\Omega, A,b}$.
After modifying the argument below along the lines of  the proof of part $(ii)$ of  Theorem~\ref{T:2}
we get similar results  for the sets $S_{\omega,A, \alpha}$,   $S_{\Omega, A, \alpha}$.
 \begin{definition}
An \emph{$\mathrm{IP}_k$-set of integers} is a set  of the form
$$
\bigl\{a_{i_1}+\dots+a_{i_\ell}\colon 1\leq \ell\leq k,\ i_1<i_2<\dots<i_\ell\bigr\}
$$
where $a_1,\dots,a_k$ are distinct positive integers.
\end{definition}
For example, an $\mathrm{IP}_3$-set  has the form $\{m,n,r,m+n,m+r,n+r, m+n+r\}$ with $m,n,r\in \N$ distinct.
\begin{proposition}\label{P:patterns}
Let $d,\ell\in \N$ and    $L_1,\ldots, L_\ell\colon \N^d\to \N$ be pairwise independent linear forms.
 Let $b, c\in \N$, $a\in \{0,\ldots, b-1\}$,  and  $S_{a,b,c}$ be either $S_{\omega, a,b}+c$ or $S_{\Omega, a,b}+c$. Then the set
\begin{equation}\label{E:set}
\big\{\bm \in \N^d \colon L_1(\bm), \ldots, L_\ell(\bm)\in S_{a,b,c}\big\}
\end{equation}
has   density $b^{-\ell}$. Therefore, the set $S_{a,b,c}$  contains $\text{IP}_k$-sets of integers for every $k\in \N$.
\end{proposition}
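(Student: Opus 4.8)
The plan is to reduce the density statement to the fact, proved in Corollary~\ref{C:2}, that each set $S_{a,b,c}$ is Gowers uniform, and then to invoke a generalized von Neumann (linear forms counting) argument: a Gowers uniform set of density $\delta$ contains the ``expected'' proportion $\delta^{\ell}$ of solutions to any pairwise independent system of $\ell$ linear equations. Granting this, the $\mathrm{IP}_k$ conclusion will follow at once by specializing the system to the one consisting of all non-empty subset-sum forms.

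To carry this out I would first write $\one_{S_{a,b,c}}=\delta+u$, where $\delta:=1/b$ (indeed $d(S_{\omega,a,b})=d(S_{\Omega,a,b})=1/b$ by averaging the analogues of \eqref{E:key}, as in the second remark after Proposition~\ref{C:Halasz}, and a shift by $c$ preserves density) and $u:=\one_{S_{a,b,c}}-\delta$ satisfies $\lim_{M\to\infty}\norm{u}_{U^s(\Z_M)}=0$ for every $s\in\N$ by Corollary~\ref{C:2}. Since the coefficients of $L_1,\dots,L_\ell$ are fixed, there is a constant $C\in\N$ with $1\le L_i(\bm)\le CM$ for all $i$ and all $\bm\in[M]^d$ outside a set of cardinality $O(M^{d-1})$; this boundary set is negligible in the density computation, so there is no harm in regarding all relevant functions as living on the cyclic group $\Z_{M'}$ for a prime $M'$ with $CM\le M'\le 2CM$. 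Expanding the product gives
\[
\frac{1}{M^d}\sum_{\bm\in[M]^d}\prod_{i=1}^{\ell}\one_{S_{a,b,c}}(L_i(\bm))
=\sum_{I\subseteq\{1,\dots,\ell\}}\delta^{\ell-|I|}\,\frac{1}{M^d}\sum_{\bm\in[M]^d}\prod_{i\in I}u(L_i(\bm)),
\]
the term $I=\emptyset$ contributing $\delta^{\ell}=b^{-\ell}$. It then remains to show that for each non-empty $I$ the corresponding average tends to $0$. Since $L_1,\dots,L_\ell$ are pairwise independent, so is any subfamily $(L_i)_{i\in I}$, and a pairwise independent system has finite Cauchy--Schwarz complexity; combined with the vanishing of all Gowers norms of $u$, the generalized von Neumann theorem --- obtained by finitely many applications of the Cauchy--Schwarz inequality --- bounds the average attached to $I$ by $\norm{u}_{U^{s}(\Z_{M'})}$ with $s=s(|I|)$, up to an $o_{M\to\infty}(1)$ error, and this tends to $0$ by the choice of $u$. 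Hence the set in \eqref{E:set} has density $b^{-\ell}$. I expect this generalized von Neumann step to be the bulk of the work: it is conceptually routine, but implementing it requires the usual bookkeeping of passing from averages over the box $[M]^d$ to the group $\Z_{M'}$ and from $U^s$-norms over intervals of length $\asymp M$ to $U^s(\Z_{M'})$, which can be done exactly as in the proof of Lemma~\ref{L:compare}. (Alternatively, one may bypass Corollary~\ref{C:2} and expand $\one_{S_{a,b,c}}$ directly through the multiplicative functions $f,f^2,\dots,f^{b-1}$ as in \eqref{E:key}, with $f(p^k)=\zeta$ for $S_{\omega,a,b}$ and $f(p^k)=\zeta^k$ for $S_{\Omega,a,b}$ and $\zeta$ a primitive $b$-th root of unity, controlling the resulting Gowers norms by part~$(ii)$ of Proposition~\ref{C:Halasz} together with Theorem~\ref{T:aperiodic}; the argument has the same structure.)

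Finally, to deduce the $\mathrm{IP}_k$ statement I would fix $k\in\N$, take $d=k$ and $\ell=2^k-1$, and let the forms be $L_J(m_1,\dots,m_k):=\sum_{j\in J}m_j$, indexed by the non-empty subsets $J\subseteq\{1,\dots,k\}$. These are pairwise independent: if $J\neq J'$, any $j$ lying in the symmetric difference of $J$ and $J'$ has coefficient $1$ in one of $L_J,L_{J'}$ and coefficient $0$ in the other, so the two forms cannot be proportional. By the density statement just proved, the set $\{\bm\in\N^k\colon L_J(\bm)\in S_{a,b,c}\text{ for every non-empty }J\}$ has density $b^{-(2^k-1)}>0$; discarding the density-zero set of $\bm$ having two equal coordinates, we obtain distinct positive integers $a_1,\dots,a_k$ all of whose non-empty subset sums lie in $S_{a,b,c}$. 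This is exactly an $\mathrm{IP}_k$-set contained in $S_{a,b,c}$, which completes the proof.
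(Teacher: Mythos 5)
Your argument is correct, but it follows a genuinely different route from the paper's. The paper does not use Gowers uniformity or any von Neumann--type inequality here at all: it inserts the roots-of-unity filter \eqref{E:identity}, which expresses $\one_{S_{a,b,c}}(n)$ as a linear combination of $(f(n-c))^j$, expands the product over $i=1,\dots,\ell$, and then disposes of every cross term in one stroke by citing \cite[Theorem~1.1]{FH15} (recorded as \eqref{E:multi2}): a multilinear average $\frac{1}{N^d}\sum_{\bm\in[N]^d}\prod_j f_j(L_j(\bm))$ vanishes as soon as one $f_j\in\CM$ is aperiodic. Your version instead starts from the Gowers uniformity of the set (Corollary~\ref{C:2}) and closes the argument with a generalized von Neumann inequality for pairwise independent systems. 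The trade-off is clear: the paper's deduction is two lines long because all the Cauchy--Schwarz and transference work is already packaged inside the companion result of \cite{FH15} (whose hypotheses are exactly ``pairwise independent forms, one aperiodic function''), whereas your route stays inside the toolkit of the present paper (Theorem~\ref{T:aperiodic} via Corollary~\ref{C:2}) but must import, or prove from scratch, a linear-forms counting lemma on $\Z_{M'}$ that is stated nowhere in the article --- Lemma~\ref{L:VDC} concerns ergodic averages, not configuration counts --- so the ``bulk of the work'' you flag is real, albeit standard (pairwise independence does give finite Cauchy--Schwarz complexity). Two small points to tidy up if you pursue your route: you need Gowers uniformity of the \emph{shifted} set $S_{a,b,c}$, not just of $S_{\omega,a,b}$ or $S_{\Omega,a,b}$; this follows from translation invariance of the $U^s(\Z_N)$ norms together with the bound $\norm{a}_{U^s(\Z_N)}^{2^s}\leq\norm{a}_{L^\infty(\Z_N)}^{2^s-1}\norm{a}_{L^1(\Z_N)}$ applied to the $O(c)$ boundary discrepancy, but it should be said. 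Your deduction of the $\mathrm{IP}_k$ statement from the subset-sum forms $L_J$ coincides with what the paper intends and is a welcome explicit verification of their pairwise independence.
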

\begin{remark}
Similar results hold for affine linear forms and all sets defined  in   Theorem~\ref{T:2}.
\end{remark}
\begin{proof}
Note first that for $n> c$ we have
\begin{equation}\label{E:identity}
\one_{S_{a,b,c}}(n)= \frac{1}{b}\sum_{j=0}^{b-1} \zeta^{-aj}(f(n-c))^j
\end{equation}
where $f$ is the multiplicative function and $\zeta$ is the $b$-th root of unity defined in the proof of Corollary~\ref{C:2}.
Note also that the density of the set in \eqref{E:set} is equal to
\begin{equation}\label{E:positive}
\lim_{N\to \infty} \frac{1}{N^d}\sum_{\bm\in [N]^d}\prod_{i=1}^\ell\one_{S_{a,b,c}}(L_i(\bm));
\end{equation}
the existence of the limit will be established momentarily.
By  \cite[Theorem~1.1]{FH15},  if at least one of the functions $f_1,\ldots, f_\ell\in \CM$  is aperiodic, then
\begin{equation}\label{E:multi2}
\lim_{N\to \infty}\frac{1}{N^d}\sum_{\bm \in [N]^d} \prod_{j=1}^\ell f_j(L_j(\bm))=0.
\end{equation}
Using that $f^j$ is aperiodic for $j=1,\ldots, b-1$, in conjunction with  \eqref{E:identity} and \eqref{E:multi2},
we deduce that the limit in \eqref{E:positive} exits and is equal to $b^{-\ell}$.
\end{proof}
\begin{theorem}[Furstenberg, Katznelson~{\bf \cite[Theorem~10.1]{FuK85}}]\label{T:IPk}
 Let $T_1,\ldots, T_\ell$ be commuting measure preserving transformations acting on the same probability space $(X,\CX,\mu)$. Then for every    $A\in \CX$ with $\mu(A)>0$ there exists $k\in \N$ such that  set
$$
\{n\in \N\colon \mu(T_1^{-n}A\cap \cdots \cap T^{-n}_\ell A)>0\}
$$
intersects non-trivially every $\text{IP}_k$-set of integers.
\end{theorem}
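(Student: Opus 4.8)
The plan is to realize Theorem~\ref{T:IPk} as the ``$\text{IP}_k$'' specialization of the multidimensional $\text{IP}$ Szemer\'edi theorem of Furstenberg and Katznelson. Write
$$
R:=\{n\in\N\colon \mu(T_1^{-n}A\cap\cdots\cap T_\ell^{-n}A)>0\},
$$
so the goal is that, for $k$ large enough (depending on the system), every $\text{IP}_k$-set meets $R$. Given distinct $a_1,\dots,a_k\in\N$ and $\emptyset\neq S\subseteq\{1,\dots,k\}$, put $b_S:=\sum_{i\in S}a_i$; since $T_1^{-b_S}A\cap\cdots\cap T_\ell^{-b_S}A=\bigcap_{i=1}^\ell(\prod_{j\in S}T_i^{a_j})^{-1}A$, the statement is recurrence along the $\text{IP}$-system of commuting transformations $\{(\prod_{j\in S}T_i^{a_j})_{i=1}^\ell\colon\emptyset\neq S\}$. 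First I would pass, via an idempotent ultrafilter on $(\beta\N,+)$ or an $\text{IP}$-ring, to $\text{IP}$-limits of the associated unitary operators on $L^2(\mu)$, which is the natural setting in which to run a Furstenberg-type structure argument (an alternative route is to deduce the statement from the density Hales--Jewett theorem of Furstenberg and Katznelson).

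The heart of the proof is an induction on $\ell$ combined with an $\text{IP}$-analogue of the Furstenberg structure theory: an $\text{IP}$-van der Corput lemma to estimate $\text{IP}$-limits of inner products $\langle\prod_i U_i^{(S)}F_1,F_2\rangle$, the construction of the corresponding $\text{IP}$-characteristic factors, and the fact that the obstruction to $\text{IP}$-recurrence lives on an $\text{IP}$-compact (generalized rotation) factor, where recurrence along a suitable sub-$\text{IP}$-set follows directly from almost periodicity. A maximal-factor (Zorn's lemma) argument then shows that the family of factors enjoying $\text{IP}$-recurrence is closed under $\text{IP}$-compact extensions, relatively weakly mixing extensions, and inverse limits, hence exhausts $(X,\CX,\mu)$; this yields that $R$ is an $\text{IP}^*$ set.

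To extract the finite $k$ of Theorem~\ref{T:IPk} one cannot merely quote the $\text{IP}^*$ property, since being $\text{IP}^*$ is strictly weaker than being $\text{IP}_k^*$ for some finite $k$. The point is that the argument above is \emph{finitary}: the almost-periodicity and weak-mixing estimates each involve only boundedly many of the generators $a_j$, so the recurrent configuration $b_S$ is produced from a bounded number of generators; carrying the whole argument out in the category of finite $\text{IP}_k$-systems and letting $k\to\infty$ only at the end (extracting a convergent subsequence of the relevant finite data) produces the required $k$. I expect the main obstacle to be precisely this bookkeeping: the $\text{IP}$-van der Corput lemma and the control of $\text{IP}$-characteristic factors are considerably more delicate than their Ces\`aro counterparts, because $\text{IP}$-limits are only finitely additive and one must repeatedly refine to sub-$\text{IP}$-sets, and keeping the dependence on only finitely many generators under control throughout this process is the technically heaviest step.
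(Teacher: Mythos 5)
First, there is nothing in the paper to compare against: Theorem~\ref{T:IPk} is quoted verbatim, with citation, as Theorem~10.1 of Furstenberg and Katznelson \cite{FuK85} and is used as a black box in Section~\ref{SS:altrec}. The paper offers no proof, so what follows is an assessment of your sketch against the argument actually given in \cite{FuK85}.

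Your outline of the infinitary IP multiple recurrence theorem (IP-limits along an IP-ring or idempotent ultrafilter, the IP van der Corput lemma, the compact/relatively weakly mixing dichotomy for extensions, and the Zorn's lemma exhaustion argument) is the right architecture, and it does yield that the return-time set $R$ is $\text{IP}^*$. The genuine gap is in your final step, the passage from $\text{IP}^*$ to $\text{IP}_k^*$ for some finite $k$. You assert that the structure-theoretic argument is ``finitary'' in the sense that each estimate consumes only boundedly many generators, so that it can be run inside $\text{IP}_k$-systems with $k$ extracted at the end. This is not substantiated and is very unlikely to be repairable as stated: the argument selects a maximal factor by Zorn's lemma and invokes Hindman's theorem infinitely often to refine to sub-IP-rings, and no effective control on the number of generators used is extracted in \cite{FuK85} (nor is one known). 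The derivation of Theorem~10.1 there goes in the opposite direction, by contradiction and compactness: if for every $k$ some $\text{IP}_k$-set generated by $a_1^{(k)},\dots,a_k^{(k)}$ avoided $R$, one takes a weak limit (after a diagonal extraction) of the joint distributions of the sets $T_i^{-b_\alpha}A$ and assembles a single probability space carrying $\ell$ commuting \emph{infinite} IP-systems together with a set of measure $\mu(A)>0$ having no simultaneous return, contradicting the infinitary theorem. This step is available only because the infinitary theorem is proved for arbitrary commuting IP-systems of measure preserving transformations, not merely for systems of the special form $\alpha\mapsto T_i^{b_\alpha}$ attached to the fixed transformations $T_1,\dots,T_\ell$; your sketch works throughout with the fixed $T_i$ and so lacks the extra generality needed to close the argument.
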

Combining Proposition~\ref{P:patterns} with Theorem~\ref{T:IPk} we get that the set of return times in Theorem~\ref{T:IPk} intersects non-trivially each of the sets  $S_{\omega, a,b,c}$ and $S_{\Omega, a,b,c}$.
 Unfortunately, a polynomial extension of Theorem~\ref{T:IPk}  is not yet available and we cannot get the full strength of the recurrence results of Corollaries~\ref{C:2} and \ref{C:3} using such  methods.

\end{document}